\documentclass[11pt,a4paper,oneside]{amsart}



\newcounter{commentcounter}

\usepackage{amsmath} 
\usepackage{amssymb} 
\usepackage{amsthm} 
\usepackage{stmaryrd} 
\usepackage[english]{babel} 
\usepackage[font=small,justification=centering]{caption} 
\usepackage[nodayofweek]{datetime}
\usepackage[shortlabels]{enumitem} 
\usepackage[T1]{fontenc} 
\usepackage[utf8]{inputenc} 
\usepackage{ifthen} 
\usepackage{mathabx} 
\usepackage{mathtools} 
\usepackage[dvipsnames]{xcolor} 
\usepackage[pdftex,  colorlinks=true,  pagebackref=true]{hyperref} 
    \hypersetup{urlcolor=RoyalBlue, linkcolor=RoyalBlue,  citecolor=black}
\usepackage{tikz-cd} 
\usepackage{xfrac} 
\usepackage[capitalize]{cleveref} 



\renewcommand*{\backref}[1]{}
\renewcommand*{\backrefalt}[4]
{
    \ifcase #1
        No citation in the text.
    \or
        Cited on page #2.
    \else
        Cited on pages #2.
    \fi
}

\makeatletter
\@namedef{subjclassname@1991}{Mathematical subject classification 1991}
\@namedef{subjclassname@2000}{Mathematical subject classification 2000}
\@namedef{subjclassname@2010}{Mathematical subject classification 2010}
\@namedef{subjclassname@2020}{Mathematical subject classification 2020}
\makeatother

\newtheorem{thm}{Theorem}[section]
\newtheorem{lemma}[thm]{Lemma}
\newtheorem{corollary}[thm]{Corollary}
\newtheorem{prop}[thm]{Proposition}

\newtheorem{question}[thm]{Question}

\newtheorem{thmx}{Theorem}
\newtheorem{corx}[thmx]{Corollary}


\theoremstyle{definition}

\newtheorem{remark}[thm]{Remark}

\theoremstyle{plain}

    \newtheoremstyle{TheoremNum}
        {\topsep}{\topsep} 
        {\itshape} 
        {-0.25cm} 
        {\bfseries} 
        {.} 
        { }  
        {\thmname{#1}\thmnote{ \bfseries #3}}
    \theoremstyle{TheoremNum}
    \newtheorem{duplicate}{}

\newcommand*{\claimproofname}{My proof}



\DeclareMathOperator{\Aut}{\mathrm{Aut}}
\DeclareMathOperator{\Out}{\mathrm{Out}}

\DeclareMathOperator{\Fix}{\mathrm{Fix}}
\DeclareMathOperator{\Stab}{\mathrm{Stab}}

\DeclareMathOperator{\Isom}{\mathrm{Isom}}
\DeclareMathOperator{\MCG}{\mathrm{MCG}}
\DeclareMathOperator{\IA}{\mathrm{IA}}

\newcommand{\cala}{{\mathcal{A}}}

\newcommand{\calc}{{\mathcal{C}}}

\newcommand{\calf}{{\mathcal{F}}}
\newcommand{\calg}{{\mathcal{G}}}

\newcommand{\calt}{{\mathcal{T}}}






\newcommand{\FIN}{\mathcal{FIN}}


\newcommand{\GL}{\mathrm{GL}}

\newcommand{\CAT}{\mathrm{CAT}}







\newcommand{\rank}{\mathrm{rk}}
\newcommand{\gd}{\mathrm{gd}}




\def\Z{\mathbb{Z}}

\newcommand{\NN}{\mathbb{N}}
\newcommand{\ZZ}{\mathbb{Z}}

\newcommand{\RR}{\mathbb{R}}

\newcommand{\FF}{\mathbb{F}}


\usepackage{tikz}
\usetikzlibrary{arrows,quotes}
\tikzstyle{blackNode}=[fill=black, draw=black, shape=circle]

\title{Centralisers and the virtually cyclic dimension of \texorpdfstring{$\Out(F_N)$}{Out(FN)}}

\usepackage[foot]{amsaddr}
\author{Yassine Guerch}
\address[Yassine Guerch]{Univ. Lyon, ENS de Lyon, UMPA UMR 5669, 46 allée d'Italie,
F-69364 Lyon cedex 07, France}
\email{yassine.guerch@ens-lyon.fr}
\author{Sam Hughes} 
\address[Sam Hughes]{Mathematical Institute, Andrew Wiles Building, Observatory Quarter, University of Oxford, Oxford OX2 6GG, United Kingdom}
\email{sam.hughes@maths.ox.ac.uk}
\author{Luis Jorge Sánchez Saldaña}
\address[Luis Jorge S\'anchez Salda\~na]{Facultad de Ciencias, Universidad Nacional Aut\'onoma de M\'exico}
\email{luisjorge@ciencias.unam.mx}
\date{\today}

\subjclass{20E36, 20F28, 20F65, 55R35, 57M07}
\keywords{Outer automorphisms of free groups, virtually cyclic dimension, classifying spaces, centralisers of abelian subgroups}

\begin{document}

\maketitle

\begin{abstract}
    We prove that the virtually cyclic (geometric) dimension of the finite index congruence subgroup $\mathrm{IA}_N(3)$ of $\mathrm{Out}(F_N)$ is $2N-2$.  From this we deduce the virtually cyclic dimension of $\mathrm{Out}(F_N)$ is finite.  Along the way we prove L\"uck's property (C) holds for $\mathrm{Out}(F_N)$, we prove that the commensurator of a cyclic subgroup of $\mathrm{IA}_N(3)$ equals its centraliser, we give an $\mathrm{IA}_N(3)$ analogue of various exact sequences arising from reduction systems for mapping class groups, and give a near complete description of centralisers of infinite order elements in $\mathrm{IA}_3(3)$.
\end{abstract}

\section{Introduction}

Let $F_N$ denote the free group on $N$ generators and let $\Out(F_N)$ denote its outer automorphism group.  The study of $\Out(F_N)$ has been ubiquitous in geometric group theory and low dimensional topology; finding connections with arithmetic groups, mapping class groups, moduli spaces of graphs, and many others.  Despite this, the topology of $\Out(F_N)$ has been notoriously hard to pin down.  

It is known that the virtual cohomological dimension of $\Out(F_N)$ is $2N-3$ \cite{CV86} and that $\Out(F_N)$ is a virtual duality group \cite{BestvinaFeighn2000}. Much work has gone into computing various Euler characteristics of $\Out(F_N)$ \cite{SmillieVogtmann1987,SmillieVogtmann1987b,BorinskyVogtmann2020} and in low dimensions the rational cohomology has been computed \cite{HatcherVogtmann1998,Ohashi2008,Bartholdi2016}. Moreover, some homological stability phenomena has been observed \cite{Hatcher1995,HatcherVogtmann2004,HatcherVogtmannWahl2006}.  In this article we will be concerned with a topological property of a different flavour.

\subsection*{Virtually cyclic dimension}
Given a group $G$, a collection of subgroups $\calg$ is called a \emph{family} if it is closed under conjugation and under taking subgroups. We say that a $G$-CW-complex $X$ is a model for the classifying space $E_\calg G$ if, given any $G$-CW-complex $Y$ with isotropy in $\calg$, there is up to homotopy a unique $G$-map $Y\to X$. Such a model always exists and it is unique up to $G$-homotopy equivalence. The \emph{geometric dimension} of $G$ with respect to the family $\calg$, denoted $\gd_\calg (G)$, is the minimum dimension $n$ such that $G$ admits an $n$-dimensional model for $E_\calg G$.

The most studied families are: $\mathcal{TR}$ the family that contains only the trivial subgroup, $\FIN$ the family of finite subgroups, and $\mathcal{VC}$ the family of virtually cyclic subgroups. For the first, one recovers $EG$ and all of the classical group cohomology to go with it.  The latter two are relevant to the isomorphism conjectures in $K$-theory, $E_\FIN G$, denoted $\underbar{E}G$, is relevant to the Baum--Connes Conjecture, and $E_\mathcal{VC} G$, the topic of this article and which we will henceforth denote by, $\underline{\underline{E}}G$, is relevant to the Farrell--Jones Conjecture.

The Farrell--Jones Conjecture, one of the most prominent conjectures in modern topology, predicts that a certain `assembly map'  
\[H_n^G(\mathrm{pr})\colon H^G_n(\underline{\underline{E}} G; \mathbf{K}_R)\to K_n(RG) \]
is an isomorphism.  Whilst we will not explain all of the ramifications and developments of the Farrell--Jones Conjecture and instead refer the reader to the book project of L\"uck \cite{LuckBookProj}.  We do point out that the Farrell--Jones Conjecture is still open for $\Out(F_N)$ but is known for mapping class groups of finite type surfaces \cite{BartelsBestvina2019}.   Clearly, the left hand side is explicitly concerned with the classifying space for virtually cyclic actions and so understanding the minimal possible dimension for a model of $\underline{\underline{E}} G$ is of great importance.

In the present article we study the geometric $\mathcal{VC}$-dimension, which from now on we denote by $\underline{\underline{\gd}}(G)$ for a group $G$.  Computations for $\underline{\underline{\gd}}$ are known for (relatively) hyperbolic groups \cite{JuanPiLeary2006,LafontOrtiz2007}, elementary amenable groups \cite{DegrijsePetrosyan2014}, discrete linear groups \cite{DegrijseKohlPetrosyan2015}, $\CAT(0)$ groups \cite{luck2009classifying,DegrijsePetrosyan2015}, virtually polycyclic groups \cite{LW12,CFH06}, mapping class groups of finite type surfaces \cite{JuanPiTru2016,Nucinkis:Petrosyan}, mapping class groups of punctured spheres \cite{AramonyaJuanTruj2018},  systolic groups \cite{OsajdaPrytula2018}, braid groups \cite{FloresGonzalez2020}, and orientable $3$-manifold groups \cite{JoeckenLafontSS2021}.

Our goal is to establish that $\underline{\underline{\gd}}(\Out(F_N))$ is finite. For our purposes we are interested also in the finite index congruence subgroup \[\IA_N(3)\coloneqq\ker(\Out(F_N) \to \Aut(H_1(F_N;\FF_3)).\] Our main result is the following:

\medskip
\begin{duplicate}[\Cref{thmx.gdIA3}]
    Let $N\geq 1$.  Then, $\underline{\underline{\gd}}(\IA_N(3))=2N-2$.
\end{duplicate}
\medskip

For a group $G$ we denote by $\underline{\gd}(G)$ the minimal dimension of a model for $\underline{E}G$.  A problem of L\"uck \cite[Problem 10.51]{LuckBookProj} asks for which groups $G$ do the inequalities $\underline{\gd}(G)-1\leq\underline{\underline{\gd}}(G)\leq \underline{\gd}(G)+1$ hold. The previous theorem answers this in the affirmative for all finite index subgroups of $\IA_N(3)$.  From here we establish our desired result:

\medskip
\begin{duplicate}[\Cref{corx.gdOutFn}]
    Let $N\geq 1$.  Then, $\underline{\underline{\gd}}(\Out(F_N))$ is finite.
\end{duplicate}
\medskip

In a sense, our proof is similar to the analogous results for mapping class groups of finite type surfaces established by Juan-Pineda--Trujillo-Negrete \cite{JuanPiTru2016} and Nucinkis--Petrosyan \cite{Nucinkis:Petrosyan}.  The proofs in the mapping class group case heavily rely on reduction systems and the Nielsen--Thurston classification of mapping classes.  Two tools that are not readily available in the $\Out(F_N)$ setting.  

One of the key steps in both proofs for mapping class groups is to the use L\"uck--Weiermann push-out construction \cite{LW12}.  This requires a description of the commensurators of infinite cyclic subgroups by means of short exact sequences coming from the reduction systems of various elements. This process allows for an inductive argument. To this end Juan-Pineda and Trujillo-Negrete prove that the commensurator of any infinite cyclic subgroup $C$ of $\mathrm{MCG}(S)$ can be realized as the normaliser of a finite index subgroup of $C$. We obtain analogous results for $\Out(F_N)$ that we now describe in great detail. 

\medskip

\subsection*{Centralisers and commensurators}

Towards proving the result of Juan-Pineda and Trujillo-Negrete for $\Out(F_N)$ we establish L\"uck's \emph{Property (C)} (see \Cref{sec:PropC} for a definition).  We then use this and results in \cite{Guerch2022Roots} to deduce the following theorem.

\medskip

\begin{duplicate}[\Cref{Coro comm centraliser}]
Let $\phi \in \IA_N(3)$. The commensurator of the cyclic group $\langle\phi\rangle$ in $\IA_N(3)$ is equal to its centraliser.    
\end{duplicate}

\medskip

Centralisers of elements of $\Out(F_N)$ have been widely studied in the literature (see for instance~\cite{bestvina1997laminations,kapovich2011stabilizers,algom2017normalizers,rodenhausen2015centralisers,AndrewMartino2022,mutanguha2022limit,Guerch2022Roots}). The centralisers of large families of elements of $\Out(F_N)$ are now completely understood, for example, fully irreducible outer automorphisms by Bestvina--Feighn--Handel~\cite{bestvina1997laminations}, atoroidal elements by the work of Feighn--Handel~\cite{feighn2009abelian} or linearly growing elements by Rodenhausen--Wade~\cite{rodenhausen2015centralisers} and Andrew--Martino~\cite{AndrewMartino2022}. All these families are, in some sense, analogues of either pseudo-Anosov homeomorphisms or Dehn twists homeomorphisms and their centralisers have a similar structure. In all these cases, the proofs also imply that the centralisers have a finite index subgroup with a finite classifying space.

However, one major difficulty in understanding the centraliser of an arbitrary element of $\Out(F_N)$ is the lack of a complete analogue of the reduction system as in the case of the mapping class groups. This often prevents us from understanding the centraliser of an automorphism using its action on free groups of smaller ranks in analogy with understanding a mapping class by its action on subsurfaces.

The theory of reduction systems for $\Out(F_N)$ has a long history as its use in the understanding of mapping class groups is central. However, contrary to the case of surface homeomorphisms, the study of $\Out(F_N)$ often requires distinct constructions of reduction systems according to the context. The first incarnation dates back to the work of Bestvina--Handel on the existence of train tracks for automorphisms of free groups (\cite{BesHan92}, see also~\cite{BestvinaFeighnHandel00,feighn2011recognition}) in order to understand dynamical properties of an individual automorphism. Reduction systems for \emph{subgroups} of $\Out(F_N)$ first appear in the work of Bestvina--Feighn--Handel on the Tits alternative for $\Out(F_N)$~\cite{BestvinaFeighnHandel00,BestvinaFeighnHandel04,BestvinaFeighnHandel05}. The construction of invariant free factors of subgroups of $\Out(F_N)$ plays a key role in the work of Handel--Mosher~\cite{HandelMosher20}, which led to the study of the bounded cohomology of $\Out(F_N)$~\cite{handel2015hyperbolic,handel2017hyperbolic}. Finally, a notion of a dynamical reduction system for subgroups of $\Out(F_N)$ was also constructed by Guirardel--Horbez in order to prove a measure equivalence rigidity result for $\Out(F_N)$~\cite{guirardel2021measure}.

In the present paper, we need to understand classifying spaces of centralisers of \emph{arbitrary} elements of $\Out(F_N)$ in order to apply L\"uck--Weiermann push-out construction. Therefore, one of the main steps in the proof of \Cref{thmx.gdIA3} is to give a another analogue of the reduction systems for mapping class groups which is well-adapted to the study of centralisers of elements of $\IA_N(3)$.  This is the content of the following theorem. We refer to \Cref{Section preliminaries} for definitions and notations.

\medskip

\begin{duplicate} [\Cref{thm:classificationcentralisers}]
    Let $N \geq 2$ and let $\phi \in \IA_N(3)$. The centraliser $C(\phi)$ of $\phi$ in $\IA_N(3)$ satisfies one of the followings.
    \begin{enumerate}
        \item The outer automorphism $\phi$ is a Dehn twist. There exist a JSJ tree $T$ preserved by $C(\phi)$ and a short exact sequence 
    \[1 \to K \to C(\phi) \to \prod_{v\in V(F_N \backslash T)} \IA_v(3) \to 1, \] where $K$ is a free abelian group whose dimension is equal to $|E(F_N \backslash T)|$ and, for every $v\in V(F_N \backslash T)$, the group $\IA_v(3)$ is a finite index subgroup of the group $\Out(G_v,\mathrm{Inc}_v)$. Moreover, $\phi$ is contained in $K$.
        \item There exist $A_1,A_2 \subseteq F_N$ with $F_N=A_1 \ast A_2$, $\rank(A_1),\rank(A_2) \leq N-1$ and a homomorphism
        \[C(\phi) \to \IA(A_1,3) \times \IA(A_2,3)\] 
         whose kernel is a finite index subgroup of a direct product of two finitely generated free (maybe trivial or cyclic) groups.
        \item There exist $A_1,\ldots, A_k,B \subseteq F_N$ nontrivial with $F_N=A_1 \ast \ldots A_k \ast B$ and a homomorphism \[C(\phi) \to \ZZ \times \prod_{i=1}^k \IA(A_i,3)\] whose kernel is abelian and $\phi$ projects onto the $\ZZ$ factor.
        \item There exist a JSJ tree $T$ preserved by $C(\phi)$, a partition $VT=V_1\coprod V_2$ and a homomorphism \[C(\phi) \to \ZZ \times \prod_{v \in V_2} \Out(G_v)\] whose kernel is abelian and $\phi$ projects onto the $\ZZ$ factor.
        \end{enumerate}
\end{duplicate}
\medskip

We note that the case of a Dehn twist was proved by Rodenhausen--Wade~\cite{rodenhausen2015centralisers} (see also the work of Cohen--Lustig~\cite{lustig1999conjugacy}). \Cref{thm:classificationcentralisers} allows us to understand the centraliser of an element of $\IA_N(3)$ through its action on free groups of smaller ranks. Note that the main limitation when compared to reduction theory of mapping class groups is that the homomorphisms given in \Cref{thm:classificationcentralisers} are not necessarily surjective. In particular, we do not always have control on the image. We expect that \Cref{thm:classificationcentralisers} is a significant step towards a complete understanding of centralisers of \emph{all} elements of $\Out(F_N)$ and will be of independent interest. We remark that the reduction systems for $\Out(F_N)$ described above are all distinct from ours but, to our knowledge, the aforementioned ones could not be directly used to compute the geometric dimension of centralisers and Weyl groups.

Using \Cref{thm:classificationcentralisers} we are able to prove an inductive structural description of the Weyl group of an infinite order element in $\IA_N(3)$ (see \Cref{Coro:sesWeylgroup}).  Our other major result on centralisers is a near comprehensive structure description of centralisers of infinite order elements in $\IA_3(3)$.  This result is in a sense a specialisation of \Cref{thm:classificationcentralisers} but requires a careful and explicit analysis of stabilisers of free factor systems of $F_3$.

\medskip

\begin{duplicate}[\Cref{Theo type VF}]
Let $\phi \in \IA_3(3)$. The centraliser $C(\phi)$ of $\phi$ in $\IA_3(3)$ is of type $\mathsf{VF}$. Moreover, one of the following holds.
\begin{enumerate}
\item The centraliser of $\phi$ is abelian.
\item The centraliser of $\phi$ is isomorphic to $F \times \ZZ$ where $F$ is a finitely generated free group.
\item The centraliser of $\phi$ is isomorphic to a direct product $H\times \ZZ$ where $H$ is a finite index subgroup of a direct product of two finitely generated free groups.
\item The outer automorphism $\phi$ is a Dehn twist. There exist a JSJ tree $T$ preserved by $C(\phi)$ and a short exact sequence 
    \[1 \to K \to C(\phi) \to \prod_{v\in V(F_3 \backslash T)} \IA_v(3) \to 1, \] where $K$ is a free abelian group whose dimension is equal to $|E(F_3 \backslash T)|$ and, for every $v\in V(F_3 \backslash T)$, the group $\IA_v(3)$ is a finite index subgroup of the group $\Out(G_v,\mathrm{Inc}_v)$. Moreover, $\phi$ is contained in $K$.
\end{enumerate}
\end{duplicate}

\medskip

We note that in \Cref{Theo type VF}, the conclusion that the centralisers are of type $\mathsf{VF}$ improves a result of Francaviglia--Martino--Syrigos~\cite[Theorem~8.2.1]{francaviglia2021action}; where they prove that centralisers of infinite order elements of $\Out(F_3)$ are finitely generated (type $\mathsf{F}_1$). This finiteness result crucially depends on work of Rodenhausen--Wade \cite{rodenhausen2015centralisers} where they prove centralisers of Dehn twists are of type $\mathsf{VF}$. 

\medskip

\begin{duplicate}[\Cref{cor type VF}]
    Let $\phi\in\IA_3(3)$.  The centraliser $C_{\Out(F_3)}(\phi)$ is of type $\mathsf{VF}$.
\end{duplicate}

\subsection*{Structure of the paper}
In \Cref{Section preliminaries} we recall the necessary background on free factor systems, $\IA_N(3)$, and relative free factor graphs.  

In \Cref{sec:PropC} we establish L\"uck's property (C) for $\Out(F_N)$. We then use property (C) and results of \cite{Guerch2022Roots} to prove \Cref{Coro comm centraliser}.

In \Cref{sec:centralisers} we prove \Cref{thm:classificationcentralisers}. The proof of this theorem is separated into two distinct subsections, according to the fixed subgroups of the considered outer automorphism. If the fixed subgroups fill the group $F_N$, then one can apply the theory of JSJ decompositions of groups. This is done in \Cref{sec:oneended}. Otherwise, the main input is work of Horbez--Wade \cite{HorbezWade20} and Guirardel--Horbez (where they attribute some work to Guirardel and Levitt) \cite{Guirardelhorbez19} as well as a very careful study of the free factor systems that arise. This is done in \Cref{sec:infinitelyended}. From here we deduce \Cref{Coro:sesWeylgroup} which gives an inductive description of the Weyl groups of elements in $\IA_N(3)$.  The work here is a key step towards being able to inductively apply the L\"uck--Weiermann construction.  But, we suspect the results established here will be of independent interest.

In \Cref{sec:OutF3} we begin to specialise \Cref{thm:classificationcentralisers} to $\IA_3(3)$ which will form the base case of our induction later. The main result of this section is \Cref{Theo type VF}. A thorough analysis of elements of $\IA_3(3)$ allows us to refine \Cref{thm:classificationcentralisers}.

In \Cref{sec:GeomDimDehn} we study the \emph{proper geometric dimension} $\underline{\gd}(W(\phi))$, where $W(\phi)$ is the Weyl group of a Dehn twist $\phi$ in $\IA_N(3)$, proving it is at most $2N-4$.  Here $\underline{\gd}$ denotes the minimal dimension of a model for $\underline{E}G$.  The need for this apparent diversion is that upper bounds on the proper geometric dimension feed into our inductive argument.  The remaining cases of possible centralisers and their Weyl groups are dealt with later but the argument for Dehn twists turns out to be somewhat more technical.

Finally, in \Cref{sec:ThmA} we combine our analysis of the centralisers, commensurators, and Weyl groups of $\IA_N(3)$ to prove \Cref{thmx.gdIA3} and deduce \Cref{corx.gdOutFn}.  The build up to the use of L\"uck--Weiermann turns out to be involved.  The key steps being \Cref{thm.gdIA3} and \Cref{prop:WeylN} where we study $\underline{\gd}(W(H))$ for the Weyl group of $H$ an infinite cyclic subgroup of $\IA_3(3)$, or a maximal infinite cyclic subgroup of $\IA_N(3)$ respectively.   At this point the main theorem is at hand.

\subsection*{Acknowledgements}
The first author was supported by the LABEX MILYON of Université de Lyon.  The second author received funding from the European Research Council (ERC) under the European Union's Horizon 2020 research and innovation programme (Grant agreement No. 850930). The third author is grateful for the financial support of DGAPA-UNAM grant PAPIIT IA106923.  All three authors thank Naomi Andrew, Ian J. Leary, Armando Martino, and Ric Wade for helpful comments and corrections on an earlier draft of this manuscript.

\section{Preliminaries}\label{Section preliminaries}

\subsection{Free factor systems} 

Let $N \geq 2$ and let $F_N$ be a nonabelian free group of rank $N$. A \emph{free factor system of $F_N$} is a finite set $\mathcal{F}=\{[A_1],\ldots,[A_k]\}$ of conjugacy classes of subgroups of $F_N$ such that there exists a subgroup $B$ of $F_N$ with $F_N=A_1 \ast \ldots \ast A_k \ast B$. 

A free factor system $\mathcal{F}$ of $F_N$ is \emph{sporadic} if either $\mathcal{F}=\{[A]\}$ and $F_N=A \ast \ZZ$ or $\mathcal{F}=\{[A],[B]\}$ and $F_N=A \ast B$. Otherwise, we say that $\calf$ is \emph{nonsporadic}.

The group $\Out(F_N)$ has a natural action on the set of free factor systems and we denote by $\Out(F_N,\mathcal{F})$ the stabiliser of a free factor system $\mathcal{F}$. Let $\phi \in \Out(F_N)$ and let $\mathcal{F}$ be a free factor system of $F_N$. Suppose that $\phi$ fixes every element of $\mathcal{F}$. Then, for every $[A] \in \mathcal{F}$, by malnormality of $A$, the element $\phi$ induces an element $\phi|_A \in \Out(A)$.

The collection of free factor systems is equiped with a natural partial order, where $\mathcal{F}_1 \leq \mathcal{F}_2$ if for every $[A] \in \mathcal{F}_1$, there exists $[B] \in \mathcal{F}_2$ such that $A \subseteq B$.

\subsection{\texorpdfstring{Properties of the subgroup $\mathrm{IA}_N(3)$}{Properties of the subgroup IAN(3)}}

Let $N \geq 2$ and let 
\[
\IA_N(3)=\ker(\Out(F_N) \to \Aut(H_1(F_N;\FF_3)).
\]
In this section, we recall some properties of $\IA_N(3)$. Most of them show some aperiodic properties of the group $\IA_N(3)$ which will be of great interest in the rest of the paper.

\begin{prop}\cite{baumslag1967centre}\label{Prop torsion free}
The group $\IA_N(3)$ is torsion free.
\end{prop}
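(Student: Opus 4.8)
The plan is to argue by contradiction: a nontrivial finite-order element of $\IA_N(3)$ will be shown to produce a nontrivial finite-order \emph{automorphism} of $F_N$ whose fixed subgroup surjects onto $H_1(F_N;\ZZ)$, and this is incompatible with the structure of periodic automorphisms of free groups. One may assume $N\geq 2$, the case $N=1$ being trivial since $\IA_1(3)=1$. So suppose $1\neq\phi\in\IA_N(3)$ has finite order. The image of $\phi$ in $\GL_N(\ZZ)$ under the action on $H_1(F_N;\ZZ)\cong\ZZ^N$ has finite order and lies in $\ker(\GL_N(\ZZ)\to\GL_N(\ZZ/3))$, which is torsion free by Minkowski's lemma; hence $\phi$ acts trivially on $H_1(F_N;\ZZ)$. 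By the realisation theorem for finite subgroups of $\Out(F_N)$ (a finite subgroup fixes a point of Outer space, equivalently is realised by symmetries of a finite marked graph), there is a finite connected graph $\Gamma$ with a marking $\pi_1(\Gamma)\cong F_N$ and a finite-order simplicial self-map $f\colon\Gamma\to\Gamma$ inducing the outer automorphism $\phi$; after one barycentric subdivision we may assume $f$ has no inversions, so that $\Gamma^f:=\Fix(f)$ is a subgraph of $\Gamma$. Since inner automorphisms act trivially on $H_1$ and $\phi$ acts trivially on $H_1(F_N;\ZZ)$, the map $f_*$ is the identity on $H_1(\Gamma;\ZZ)\cong\ZZ^N$ (and on $H_0$).

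Next I would control $\Gamma^f$ via the Lefschetz fixed point theorem. Because $f$ is simplicial without inversions, $L(f)=\chi(\Gamma^f)$, while $L(f)=\operatorname{tr}(f_*\mid H_0)-\operatorname{tr}(f_*\mid H_1)=1-N$. Since $1-N<0$, in particular $\Gamma^f\neq\emptyset$. Let $c\geq 1$ be the number of connected components of $\Gamma^f$; then $b_1(\Gamma^f)=c-\chi(\Gamma^f)=c-1+N\geq N$. On the other hand the inclusion $\Gamma^f\hookrightarrow\Gamma$ induces an injection on $H_1$ with $\ZZ$-coefficients (for a graph this group is just the group of $1$-cycles inside the chain group, and $Z_1(\Gamma^f)=Z_1(\Gamma)\cap C_1(\Gamma^f)$), so $b_1(\Gamma^f)\leq b_1(\Gamma)=N$. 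Therefore $b_1(\Gamma^f)=N$ and $c=1$: the fixed subgraph is connected, and the injection $H_1(\Gamma^f;\ZZ)\to H_1(\Gamma;\ZZ)$ of free abelian groups of equal rank has torsion-free (as a subgroup of $C_1(\Gamma)/C_1(\Gamma^f)$), hence trivial, cokernel; thus it is an isomorphism.

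Finally I would bring in Dyer and Scott. Choose a basepoint $x_0\in\Gamma^f$; then $f$ induces (via the marking) an automorphism $\phi_0\in\Aut(F_N)$ representing the outer class $\phi$, of finite order, trivial on $H_1(F_N;\ZZ)$, and nontrivial because $\phi\neq 1$. Every loop in $\Gamma^f$ based at $x_0$ is fixed pointwise by $f$, so $\pi_1(\Gamma^f,x_0)\subseteq\Fix(\phi_0)$, and by the previous paragraph $\pi_1(\Gamma^f,x_0)$ already surjects onto $H_1(F_N;\ZZ)$, whence $\Fix(\phi_0)$ surjects onto $H_1(F_N;\ZZ)\cong\ZZ^N$. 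By the theorem of Dyer and Scott on periodic automorphisms of free groups, $\Fix(\phi_0)$ is a free factor of $F_N$, say $F_N=\Fix(\phi_0)\ast B$; then $H_1(F_N;\ZZ)=H_1(\Fix(\phi_0);\ZZ)\oplus H_1(B;\ZZ)$ and the image of $H_1(\Fix(\phi_0);\ZZ)$ is exactly the first summand, so surjectivity forces $H_1(B;\ZZ)=0$ and hence $B=1$. Thus $\Fix(\phi_0)=F_N$, i.e.\ $\phi_0=\id$, contradicting $\phi\neq 1$. I expect the delicate point to be precisely this last bridge: a subgroup carrying all of $H_1(F_N;\ZZ)$ need not be the whole group, and it is the combination of the \emph{basepointed} realisation (giving $\pi_1(\Gamma^f,x_0)\subseteq\Fix(\phi_0)$) with the Dyer--Scott free-factor theorem that closes the gap, with the realisation theorem and Minkowski's lemma as the other essential inputs.
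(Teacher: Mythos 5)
Your argument is correct, and it is genuinely different in character from what the paper does: the paper offers no proof at all, simply citing Baumslag--Taylor, whose original argument is algebraic (a Minkowski-style congruence argument carried out on nilpotent/abelian quotients of $F_N$), whereas you give a topological proof. Your chain of reasoning checks out: Minkowski's lemma reduces to the case where $\phi$ acts trivially on $H_1(F_N;\ZZ)$; the realisation theorem (Culler, Khramtsov, Zimmermann) plus one barycentric subdivision makes the Hopf trace formula legitimate, giving $\chi(\Gamma^f)=L(f)=1-N$; the cycle-space argument correctly yields that $\Gamma^f$ is connected with $H_1(\Gamma^f;\ZZ)\to H_1(\Gamma;\ZZ)$ an isomorphism (injective with torsion-free cokernel between groups of equal rank); and the basepointed representative $\phi_0$ then has $\Fix(\phi_0)$ surjecting onto $H_1(F_N;\ZZ)$, which together with the free-factor property forces $\phi_0=\id$. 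The one point you should flag explicitly is the use of Dyer--Scott: their theorem is stated for automorphisms of prime order, while your $\phi_0$ has arbitrary finite order $k$; the general case follows by an easy induction, since $\phi_0$ preserves $A=\Fix(\phi_0^{k/p})$ (a free factor by the prime-order case), restricts to $A$ with order dividing $k/p$, and $\Fix(\phi_0)=\Fix(\phi_0|_A)$, so a free factor of a free factor does the job. What your route buys is a self-contained, geometric proof of the stronger fact that the Torelli kernel of $\Out(F_N)\to\GL_N(\ZZ)$ is torsion free, from which the proposition follows; what the citation buys the paper is brevity, since the result is classical.
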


\begin{thm}\label{Theo:freefactorconjclassfixed}\cite[Theorem~II.3.1]{HandelMosher20} 
Let $H$ be a subgroup of $\IA_N(3)$.

\medskip

\noindent{$(1)$ } Suppose $\mathcal{F}$ is an $H$-periodic free factor system. Then $\mathcal{F}$ is fixed by $H$ and every element $[A] \in \mathcal{F}$ is fixed by $H$.

\medskip

\noindent{$(2)$ } If $[g]$ is an $H$-periodic conjugacy class of some element $g \in F_N$, then $[g]$ is fixed by $H$.
\end{thm}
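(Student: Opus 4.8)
This is \cite[Theorem~II.3.1]{HandelMosher20}, so in the paper it is invoked as a black box; here is the argument I would give. First I would reduce to the case $H=\langle\phi\rangle$ cyclic: if the $H$-orbit of $\mathcal F$ (resp.\ of $[g]$) is finite, then so is its $\langle\phi\rangle$-orbit for every $\phi\in H$, and if each $\phi\in H$ fixes the object, then so does $H$; hence it suffices to treat a single $\phi\in\IA_N(3)$ possessing a $\phi$-periodic free factor system or conjugacy class.

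Next I would isolate the one easy input, a homological rigidity lemma, which disposes of the clause ``every $[A]\in\mathcal F$ is fixed'' of part~(1) as soon as $\mathcal F$ itself is known to be fixed. Writing $\mathcal F=\{[A_1],\dots,[A_k]\}$ with $F_N=A_1\ast\cdots\ast A_k\ast B$, the images $V_i\le H_1(F_N;\FF_3)$ of the $A_i$ are nonzero and $H_1(F_N;\FF_3)=V_1\oplus\cdots\oplus V_k\oplus H_1(B;\FF_3)$, so the $V_i$ are pairwise distinct; any $\psi\in\Out(F_N)$ fixing $\mathcal F$ permutes the $[A_i]$ by some $\sigma$ and carries $V_i$ to $V_{\sigma(i)}$, and if $\psi\in\IA_N(3)$ it acts trivially on $H_1(F_N;\FF_3)$, forcing $\sigma=\id$. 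This homological argument is genuinely all that is needed for that clause, but it cannot prove the rest: the transvection sending $a\mapsto ab^3$ and fixing the remaining basis elements lies in $\IA_N(3)$ yet sends the free factor $\langle a\rangle$ and the conjugacy class $[a]$ to different ones with the same class in $H_1(F_N;\FF_3)$. So the real content is that a \emph{periodic} free factor system or conjugacy class of an element of $\IA_N(3)$ has period exactly $1$.

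For that core statement the plan is to pass to a completely split relative train track representative $f\colon G\to G$ of a suitable power of $\phi$, as furnished by \cite{feighn2011recognition} (building on \cite{BesHan92}), to read off the $\phi$-periodic free factor systems from the $f$-invariant filtration of $G$ and the $\phi$-periodic conjugacy classes from the periodic closed paths and Nielsen paths, and then to rerun the ``nonzero and in direct sum, hence distinguishable modulo $3$'' mechanism one level deeper, now on the strata, vertices, directions, and the combinatorial data that determine these objects, to conclude that an element of $\IA_N(3)$ cannot permute them nontrivially. Equivalently, this is the assertion that every element of $\IA_N(3)$ is rotationless in the sense of Feighn--Handel. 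I expect the main obstacle to be precisely this last step: the homological rigidity is a one-line observation, but transferring it from ``the homology class of a free factor'' to ``the combinatorial data pinning down the free factor'' requires the structure theory of $\Out(F_N)$, and that is where the real work of Handel--Mosher's proof lies.
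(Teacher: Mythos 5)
You should first note that the paper itself offers no proof of this statement: it is imported verbatim from Handel--Mosher \cite[Theorem~II.3.1]{HandelMosher20} and used as a black box, so the only meaningful comparison is with their external argument. Your outer layers are correct and do reflect how the result is organised: the reduction to a single $\phi\in\IA_N(3)$ is valid, and the mod-$3$ homology argument correctly shows that once a free factor system $\mathcal{F}$ is $\phi$-invariant its components cannot be permuted, since their images in $H_1(F_N;\FF_3)$ are nonzero summands of a direct sum decomposition and $\phi$ acts trivially on homology; your transvection example $a\mapsto ab^3$ also correctly demonstrates that homology alone cannot do more than this.

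The genuine gap is the core assertion itself, namely that a $\phi$-periodic free factor system or conjugacy class is actually $\phi$-fixed --- equivalently, as you say, that every element of $\IA_N(3)$ is forward rotationless. Your plan to ``rerun the mod-$3$ mechanism one level deeper on strata, vertices and directions'' is a gesture rather than an argument: the finite sets that control periodicity in a completely split train track representative (principal vertices, periodic directions, the $\langle\phi\rangle$-orbit of $\mathcal{F}$) carry no natural $\FF_3$-linear structure through which the $\phi$-action factors via $H_1(F_N;\FF_3)$, so the ``nonzero summands of a direct sum, hence distinguishable'' trick does not transfer. Closing this requires the full Feighn--Handel recognition/CT machinery together with Handel--Mosher's analysis of principal automorphisms and the rotation data comparing $\phi$ with a rotationless power $\phi^k$, which is precisely the content of the cited theorem. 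You flag this honestly, but as written the proposal only establishes the easy clause of part~(1) (no permutation of the components of an already fixed $\mathcal{F}$) and defers everything else; for the purposes of this paper the correct move is simply to cite \cite{HandelMosher20}, as the authors do.
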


\begin{thm}\cite[Theorem~1.1]{HandelMosher20abelian}\label{Theo virtually abelian is abelian}
Let $H$ be a virtually abelian subgroup of $\IA_N(3)$. Then $H$ is abelian and finitely generated.
\end{thm}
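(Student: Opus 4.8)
The plan is to prove both conclusions at once, by induction on the rank $N$, after reducing the statement to a rigidity phenomenon: a finite group cannot act nontrivially by conjugation on a free abelian normal subgroup of a subgroup of $\IA_N(3)$. \emph{Reduction step.} Given $H\le\IA_N(3)$ virtually abelian, first I would fix a finite index free abelian subgroup $A\trianglelefteq H$, normal in $H$; conjugation gives $\rho\colon H\to\Aut(A)\cong\GL_k(\ZZ)$ with finite image (for every $g$, $g^m\in A$ with $m=[H:A]$, so $\rho(g)^m=\id$). If $\rho$ is trivial then $A$ is central, so $H$ is a finitely generated nilpotent group which is virtually $\ZZ^k$; since $\IA_N(3)$ is torsion free (\Cref{Prop torsion free}), $H$ is torsion free, and a torsion free finitely generated nilpotent group with a finite index abelian subgroup is abelian (it embeds in its Malcev completion, a simply connected nilpotent Lie group of dimension $k$ which then contains a $k$--dimensional abelian subgroup, hence is abelian). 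Finiteness of generation comes for free: abelian subgroups of $\Out(F_N)$ are finitely generated of bounded rank by the abelian subgroup theorem of Bestvina--Feighn--Handel and Feighn--Handel, and a finite index overgroup of a finitely generated group is finitely generated. So everything reduces to showing $\rho$ is trivial.

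\emph{Triviality of $\rho$.} Fix $g\in H$; conjugation by $g$ permutes $A$ and fixes $g^m\in A$, and has finite order in $\GL_k(\ZZ)$. The heart of the argument is that this conjugation must fix, for each $\phi\in A$, the canonical dynamical data of $\phi$: its attracting and repelling laminations, its expansion factors, and the free factor support of its exponentially growing part, together with (in the polynomially growing case) a canonical invariant free factor system carrying its twistors. These objects are canonical, so $\rho(g)$ permutes them; since $\rho(g)$ has finite order each is $\langle g\rangle$--periodic, and the level--$3$ rigidity of $\IA_N(3)$ -- \Cref{Theo:freefactorconjclassfixed} together with its lamination/expansion--factor counterpart -- upgrades ``periodic'' to ``$g$--fixed''. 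In particular $g$ preserves the joint invariant free factor system $\calf_A$ of the abelian group $A$ and the lamination package of each $\phi\in A$, and by \Cref{Theo:freefactorconjclassfixed} each component $[B]$ of $\calf_A$ is fixed by $A$ and by $g$. If $\calf_A$ is nontrivial, restriction to a component $[B]$ carries $A|_B$ and $g|_B$ into $\IA(B,3)$ (the congruence condition passes to free factors, as $H_1(B;\FF_3)$ injects into $H_1(F_N;\FF_3)$), so induction on rank applied to $\langle A|_B,g|_B\rangle$ shows $g|_B$ centralises $A|_B$ for every component; hence $\langle g,A\rangle$ has abelian image in $\prod_B\Out(B)$, its commutator subgroup lies in the free abelian relative twist group, and the finite order conjugation action of $g$ on that twist group is trivial by the same level--$3$ rigidity (twistor data is periodic, hence fixed), which by the Malcev argument again forces $\langle g,A\rangle$ abelian, i.e. $\rho(g)=\id$. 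If $\calf_A$ is trivial (a generator of $A$ is, up to its centraliser, irreducible with filling attracting lamination), one argues directly: $g$ fixes the attracting lamination, the repelling lamination and the expansion factor of such an element, confining $g$ to its centraliser, from which $\rho(g)=\id$. In all cases $\rho$ is trivial and the induction closes.

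\emph{Main obstacle.} The genuinely hard part is supplying those canonical invariants and their rigidity. One needs the relative train track / $\mathrm{CT}$ technology of Bestvina--Feighn--Handel and Feighn--Handel to attach to a (virtually) abelian subgroup a well defined collection of attracting laminations, expansion factor homomorphisms and canonical invariant free factor systems, to know these assemble into a ``disintegration'' fine enough that fixing all of them pins an automorphism down inside the centraliser of the package, and to verify that the level--$3$ congruence condition kills the resulting periodicity -- this last point is precisely where the ``$3$'' is used and where \Cref{Theo:freefactorconjclassfixed} gets extended from free factor systems and conjugacy classes to laminations, expansion factors and twistors. This is exactly the content of the cited work of Handel--Mosher; the scheme above is only the skeleton on which that analysis hangs.
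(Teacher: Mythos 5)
The paper does not prove this statement at all: it is imported from Handel--Mosher \cite[Theorem~1.1]{HandelMosher20abelian}, so your argument has to stand on its own, and as written it does not. Your reduction step is fine: the conjugation action $\rho\colon H\to\Aut(A)$ has finite image because $A$ is abelian of finite index; if $\rho$ is trivial then $A$ is central of finite index, Schur's theorem makes $[H,H]$ finite, torsion-freeness of $\IA_N(3)$ (\Cref{Prop torsion free}) kills it, and finite generation follows from the bounded rank of abelian subgroups of $\Out(F_N)$. But the whole content of the theorem is your ``triviality of $\rho$'' step, and there you invoke rather than prove the decisive inputs: that for $g\in\IA_N(3)$ a $g$-periodic attracting lamination, expansion factor, or twist datum attached to an element of $A$ is automatically $g$-fixed, and that fixing this entire package of invariants confines $g$ to the centraliser of $A$. \Cref{Theo:freefactorconjclassfixed} gives the ``periodic implies fixed'' upgrade only for free factor systems and conjugacy classes; its ``lamination/expansion-factor/twistor counterpart'' is exactly the disintegration technology on which the cited theorem rests, and you concede as much in your closing paragraph. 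A skeleton that defers precisely the hard rigidity statements to the reference being proved is not a proof.

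Two intermediate steps are also unjustified as stated. In the reducible case, the kernel of the restriction map $\langle g,A\rangle\to\prod_{[B]\in\calf_A}\Out(B)$ for a free factor system (trivial edge groups) is a Fouxe--Rabinovitch-type group generated by partial conjugations and transvections, which is not free abelian in general; so ``its commutator subgroup lies in the free abelian relative twist group'' does not follow, and the Schur/Malcev argument cannot simply be rerun there. In the case where $\calf_A$ is trivial, ``$g$ fixes the laminations and the expansion factor, hence lies in the centraliser'' needs the existence of a fully irreducible element of $A$ (\Cref{Theo fully irreducible contained}) and the virtual cyclicity of the lamination-pair stabiliser together with torsion-freeness to conclude that $g$ commutes with it; none of this is spelled out. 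So the proposal is a plausible roadmap through the Handel--Mosher argument, with genuine gaps exactly where that paper does its work, rather than a proof of the theorem.
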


\begin{lemma}\cite[Lemma~2.9]{Guerch2022Roots}\label{Lem Tits}
A subgroup $H$ of $\IA_N(3)$ is abelian if and only if it does not contain a nonabelian free group.
\end{lemma}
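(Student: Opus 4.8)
The plan is to prove the non-trivial implication by invoking the Tits alternative for $\Out(F_N)$ and then promoting the resulting algebraic structure using the rigidity results for $\IA_N(3)$ recorded above. The implication ``$H$ abelian $\Rightarrow$ $H$ contains no nonabelian free subgroup'' is immediate, since every subgroup of an abelian group is abelian, so all the content is in the converse.

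For the converse, suppose $H \leq \IA_N(3)$ contains no nonabelian free subgroup. By the Tits alternative for $\Out(F_N)$ of Bestvina--Feighn--Handel \cite{BestvinaFeighnHandel00,BestvinaFeighnHandel04,BestvinaFeighnHandel05}, every subgroup of $\Out(F_N)$ either contains a nonabelian free subgroup or is virtually polycyclic; hence $H$ is virtually polycyclic, and in particular virtually solvable. I would then argue that such a subgroup of $\IA_N(3)$ is in fact virtually abelian, using the structure theory of polynomially growing subgroups together with the torsion-freeness of $\IA_N(3)$ (\Cref{Prop torsion free}), and finally apply \Cref{Theo virtually abelian is abelian} of Handel--Mosher to upgrade ``virtually abelian'' to ``abelian'' (and finitely generated). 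Depending on the precise form of the Tits alternative one quotes, the first two of these steps may collapse into a single citation.

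The main obstacle is the middle step: ruling out genuinely non-(virtually)-abelian virtually solvable subgroups of $\IA_N(3)$, such as Heisenberg-type or filtered unipotent subgroups that can occur in $\Out(F_N)$ at large rank. Note that torsion-freeness alone does not suffice to pass from ``virtually abelian'' to ``abelian'' --- the Klein bottle group is torsion free and virtually $\ZZ^2$ but not abelian --- so one genuinely needs an input specific to $\IA_N(3)$, namely \Cref{Theo virtually abelian is abelian}, rather than any soft argument valid for arbitrary torsion-free subgroups of $\Out(F_N)$. Once these points are in place, the equivalence follows formally.
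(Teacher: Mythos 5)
The paper itself contains no proof of this statement: it is imported verbatim as \cite[Lemma~2.9]{Guerch2022Roots}, and the route you sketch is the standard one that the cited source follows, so in substance you are reconstructing the same argument. Two small corrections to your outline. First, the Tits alternative of Bestvina--Feighn--Handel \cite{BestvinaFeighnHandel00,BestvinaFeighnHandel05} gives that a subgroup with no nonabelian free subgroup is \emph{virtually solvable}, not ``virtually polycyclic''; the polycyclic/abelian structure only comes afterwards. Second, the ``main obstacle'' you describe --- ruling out Heisenberg-type or other genuinely non-virtually-abelian solvable subgroups --- is not something to re-derive from torsion-freeness and polynomial-growth structure theory; it is precisely the Bestvina--Feighn--Handel theorem that solvable subgroups of $\Out(F_N)$ are virtually abelian \cite{BestvinaFeighnHandel04}, which is one of the papers you already cite, so that step is a direct quotation rather than an argument. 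With that substitution the chain is exactly: no nonabelian free subgroup $\Rightarrow$ virtually solvable $\Rightarrow$ virtually abelian $\Rightarrow$ abelian, the last implication being \Cref{Theo virtually abelian is abelian} of Handel--Mosher, which you correctly identify (together with your Klein bottle caveat) as the genuinely $\IA_N(3)$-specific input; torsion-freeness (\Cref{Prop torsion free}) alone would not suffice.
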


\begin{thm}\cite[Theorem~1.1]{Guerch2022Roots}\label{Theo:roots}
For all $\phi, \psi \in \IA_N(3)$, if there exists $m \in \ZZ^*$ such that $\phi^m=\psi^m$, then $\phi=\psi$.
\end{thm}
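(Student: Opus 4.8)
The plan is to deduce \Cref{cor type VF} from \Cref{Theo type VF} together with standard facts about passing between a group and its finite index subgroups for the property $\mathsf{VF}$. First I would recall that $\IA_3(3)$ is a finite index (normal) subgroup of $\Out(F_3)$, by construction as the kernel of $\Out(F_3)\to\Aut(H_1(F_3;\FF_3))$, which is a map to a finite group. Fix $\phi\in\IA_3(3)$ and write $C\coloneqq C_{\IA_3(3)}(\phi)$ and $\widehat C\coloneqq C_{\Out(F_3)}(\phi)$. The key observation is that $C=\widehat C\cap\IA_3(3)$, so $C$ is a finite index subgroup of $\widehat C$; indeed $[\widehat C:C]\leq[\Out(F_3):\IA_3(3)]<\infty$.

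Next I would invoke \Cref{Theo type VF}, which tells us $C$ is of type $\mathsf{VF}$, i.e. $C$ has a finite index subgroup $C_0$ admitting a finite ($=$ finite type, contractible) model for its classifying space $EC_0$, equivalently $C_0$ is of type $\mathsf{F}$ and has finite cohomological dimension. Since $C_0$ has finite index in $C$ and $C$ has finite index in $\widehat C$, the subgroup $C_0$ has finite index in $\widehat C$. Therefore $\widehat C$ itself contains a finite index subgroup, namely $C_0$, of type $\mathsf{F}$, which is precisely the statement that $\widehat C$ is of type $\mathsf{VF}$. This completes the argument.

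The only point requiring a little care — and the one I would state explicitly — is the transitivity of the ``finite index subgroup of type $\mathsf{F}$'' property: a finite index subgroup of a finite index subgroup is again a finite index subgroup, so the witness $C_0$ for $C$ being $\mathsf{VF}$ simultaneously witnesses $\widehat C$ being $\mathsf{VF}$. One does not even need $C_0$ to be normal in $\widehat C$, since the definition of type $\mathsf{VF}$ only requires the existence of \emph{some} finite index subgroup of type $\mathsf{F}$. I do not anticipate any genuine obstacle here; the entire content is in \Cref{Theo type VF}, and \Cref{cor type VF} is a formal consequence. If one wished to be slightly more self-contained one could instead note that type $\mathsf{VF}$ is equivalent to being virtually of type $\mathsf{FP}$ together with finite virtual cohomological dimension and finite presentability, all of which are manifestly commensurability invariants, but the one-line argument via transitivity of finite index is cleanest.
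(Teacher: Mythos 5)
Your proposal does not address the statement in question. The statement labelled \Cref{Theo:roots} asserts uniqueness of roots in $\IA_N(3)$: for all $\phi,\psi\in\IA_N(3)$, if $\phi^m=\psi^m$ for some $m\in\ZZ^*$, then $\phi=\psi$. What you have written instead is a deduction of \Cref{cor type VF} (the centraliser $C_{\Out(F_3)}(\phi)$ is of type $\mathsf{VF}$) from \Cref{Theo type VF} via the standard finite-index/commensurability argument. That deduction is fine as far as it goes, but it is a proof of a different result; nothing in it bears on powers of outer automorphisms, on roots, or on why equal $m$-th powers in $\IA_N(3)$ force equality of the elements themselves.

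Two further remarks. First, \Cref{Theo:roots} is not proved in this paper at all: it is quoted from \cite[Theorem~1.1]{Guerch2022Roots}, so any self-contained argument would have to reproduce the content of that external result (which rests on the structure theory of $\IA_N(3)$, e.g.\ torsion-freeness and the fact that commuting powers force commuting elements), not on the centraliser classification developed later in this paper. Second, even if one wanted to argue via the machinery of \Cref{sec:OutF3} or \Cref{sec:centralisers}, that route would be circular: \Cref{Theo:roots} and its companion \Cref{Prop:powercommute} are inputs to \Cref{Lemma:takingpowercenraliser}, \Cref{Prop:Commequalsnormaliser}, and ultimately to the centraliser analysis, so those later results cannot be used to establish it. To repair the submission you would need to give (or correctly cite and summarise) an argument for the uniqueness-of-roots statement itself, for instance: if $\phi^m=\psi^m$ then $\psi\phi^m\psi^{-1}=\phi^m$, so powers of $\phi$ and $\psi$ commute, hence by \Cref{Prop:powercommute} $\phi$ and $\psi$ commute, and then one must still rule out $\phi\psi^{-1}$ being a nontrivial torsion-free element with $(\phi\psi^{-1})$ of finite order or with vanishing $m$-th power contribution — which is exactly the nontrivial step carried out in \cite{Guerch2022Roots} and absent from your write-up.
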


\begin{prop}\cite[Corollary~4.2]{Guerch2022Roots}\label{Prop:powercommute}
For all $\phi, \psi \in \IA_N(3)$, if there exist $m,n \in \ZZ^*$ such that $\phi^m$ and $\psi^n$ commute, then $\phi$ and $\psi$ commute.
\end{prop}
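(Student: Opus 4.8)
The plan is to deduce this purely from the unique-roots property of $\IA_N(3)$ (\Cref{Theo:roots}), via two successive conjugation arguments; no other input is needed.

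\emph{Step 1: $\psi^n$ commutes with $\phi$.} Assume $\phi,\psi\in\IA_N(3)$ and $m,n\in\ZZ^*$ with $\phi^m\psi^n=\psi^n\phi^m$. I would look at the conjugate $\psi^n\phi\psi^{-n}$, which again lies in $\IA_N(3)$, and compute its $m$-th power: since conjugation is a homomorphism,
\[
(\psi^n\phi\psi^{-n})^m=\psi^n\phi^m\psi^{-n}=\phi^m,
\]
the last equality being exactly the hypothesis that $\phi^m$ and $\psi^n$ commute. Thus $\psi^n\phi\psi^{-n}$ and $\phi$ are two elements of $\IA_N(3)$ with the same (nonzero) $m$-th power, so \Cref{Theo:roots} forces $\psi^n\phi\psi^{-n}=\phi$; equivalently $\psi^n$ and $\phi$ commute.

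\emph{Step 2: $\phi$ commutes with $\psi$.} Now I would run the same argument with the roles of $\phi$ and $\psi$ interchanged, using the relation obtained in Step 1. The element $\phi\psi\phi^{-1}\in\IA_N(3)$ satisfies
\[
(\phi\psi\phi^{-1})^n=\phi\psi^n\phi^{-1}=\psi^n,
\]
since $\phi$ commutes with $\psi^n$. Hence $\phi\psi\phi^{-1}$ and $\psi$ have the same $n$-th power, and \Cref{Theo:roots} yields $\phi\psi\phi^{-1}=\psi$, i.e. $\phi$ and $\psi$ commute, as desired.

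I do not expect a genuine obstacle: the whole content is \Cref{Theo:roots}, and the only idea is the observation that conjugation by an element centralising $\phi^m$ fixes the $m$-th power of $\phi$ and hence, by uniqueness of roots, fixes $\phi$. Two small remarks: one could instead try to package $\langle\phi,\psi^n\rangle$ as a virtually abelian (hence abelian, by \Cref{Theo virtually abelian is abelian}) subgroup, but $\langle\phi^m,\psi^n\rangle$ need not have finite index in it, so that route does not obviously work and the direct argument above is cleaner; and since \Cref{Theo:roots} is stated for all $m\in\ZZ^*$, no case split on the signs of $m$ and $n$ is needed.
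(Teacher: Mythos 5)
Your argument is correct: both conjugation steps stay inside $\IA_N(3)$, the identity $(\psi^n\phi\psi^{-n})^m=\psi^n\phi^m\psi^{-n}$ holds for any $m\in\ZZ^*$ (negative included), and \Cref{Theo:roots} applies exactly as you use it, so the two-step argument is complete. Note, however, that the paper does not prove this statement at all — it is imported verbatim as \cite[Corollary~4.2]{Guerch2022Roots}, just as \Cref{Theo:roots} is imported as Theorem~1.1 of the same reference. What your write-up shows is that the commuting-powers statement is a purely formal consequence of unique extraction of roots, valid in any group with that property; this is presumably close to how the cited corollary is obtained in the source, and it makes the present paper self-contained on this point modulo \Cref{Theo:roots} alone. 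Your side remark is also apt: the alternative route through \Cref{Theo virtually abelian is abelian} (or \Cref{Lem Tits}) would require knowing that $\langle\phi,\psi\rangle$ is virtually abelian, which is not available at this stage, so the direct conjugation argument is the right one.
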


\subsection{The relative free factor graph}

In this section, we introduce a Gromov hyperbolic space on which $\Out(F_N)$ acts by isometries. This space will play a key role in the study of centralisers of elements of $\Out(F_N)$ as centralisers will fix points of its Gromov boundary. 

Let $\mathcal{F}$ be a free factor system of $F_N$. An \emph{$(F_N,\mathcal{F})$-free factor system} is a proper free factor system $\mathcal{F}'$ of $F_N$ with $\mathcal{F}<\mathcal{F}'$.  An \emph{$(F_N,\mathcal{F})$-free factor} is a subgroup $A$ of $F_N$ such that there exists an $(F_N,\mathcal{F})$-free factor system $\mathcal{F}'$ of $F_N$ with $[A] \in \mathcal{F}'$. 

The \emph{free factor graph of $F_N$ relative to $\mathcal{F}$}, denoted by $\FF(F_N,\mathcal{F})$, is the graph whose vertices are the conjugacy classes of $(F_N,\mathcal{F})$-free factors of $F_N$, two such conjugacy classes $[A], [B]$ being adjacent if either $A \subsetneq B$ or $B \subsetneq A$.

By a result of Handel--Mosher~\cite{HandelMosher14}, the graph $\FF(F_N,\mathcal{F})$ is Gromov-hyperbolic (see also the work of Bestvina--Feighn~\cite{bestvina2014hyperbolicity} for the case $\mathcal{F}=\varnothing$ and the work of Guirardel--Horbez~\cite[Proposition~2.11]{Guirardelhorbez19} for general free products of groups). 

The group $\Out(F_N,\mathcal{F})$ acts naturally on $\FF(F_N,\mathcal{F})$ by isometries. An outer automorphism $\phi \in \Out(F_N,\mathcal{F})$ is \emph{fully irreducible relative to $\mathcal{F}$} if there does not exist a proper free factor system $\mathcal{F} < \mathcal{F}'$ fixed by a power of $\phi$. These elements are the loxodromic elements of $\FF(F_N,\mathcal{F})$. 

\begin{thm}\cite[Theorem~A]{gupta18}\label{Theo loxo free factor}
Let $\mathcal{F}$ be a nonsporadic free factor system of $F_N$. An element $\phi \in \Out(F_N,\mathcal{F})$ is a loxodromic element of $\FF(F_N,\mathcal{F})$ if and only if $\phi$ is fully irreducible relative to $\mathcal{F}$.
\end{thm}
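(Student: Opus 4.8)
The plan is to prove both implications, with the bulk of the work in the ``if'' direction; the ``only if'' direction is elementary. Suppose first that $\phi$ is \emph{not} fully irreducible relative to $\calf$, so some power $\phi^{k}$ fixes a proper free factor system $\calf'$ with $\calf<\calf'$. Since $\phi^{k}$ permutes the finitely many conjugacy classes comprising $\calf'$, a further power $\phi^{km}$ fixes each $[A]\in\calf'$, and each such $[A]$ is by definition a vertex of $\FF(F_N,\calf)$. Hence the $\langle\phi\rangle$-orbit of this vertex is finite, $\phi$ acts elliptically, and $\phi$ is not loxodromic.

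For the converse I would produce a $\phi$-invariant quasi-geodesic line in $\FF(F_N,\calf)$ along which $\phi$ translates nontrivially. The source of such a line is the relative Outer space $\calo(F_N,\calf)$, i.e. the Guirardel--Levitt deformation space of minimal simplicial $F_N$-trees whose point stabilisers are exactly the conjugates of the subgroups listed in $\calf$, taken up to equivariant homeomorphism and rescaling; the stabiliser $\Out(F_N,\calf)$ acts on it. Following Bestvina--Feighn in the absolute case and Handel--Mosher and Guirardel--Horbez in the relative/free-product setting, there is a coarsely well-defined, coarsely Lipschitz, $\Out(F_N,\calf)$-equivariant projection $\pi\colon\calo(F_N,\calf)\to\FF(F_N,\calf)$ sending a tree to a free factor supported by one of its shortest conjugacy classes. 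Using relative train-track theory (Bestvina--Feighn--Handel, together with its free-product versions due to Francaviglia--Martino and Horbez), a fully irreducible relative to $\calf$ element $\phi$ admits a ``fully irreducible'' relative train-track representative whose top-stratum transition matrix is Perron--Frobenius; iterating the associated fold sequence yields a periodic folding line $L\subseteq\calo(F_N,\calf)$ on which $\phi$ acts as translation by $\log\lambda(\phi)>0$.

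The key technical input, which I also expect to be the main obstacle, is the relative analogue of Bestvina--Feighn's statement that \emph{projections of folding paths are unparametrised quasi-geodesics}: a fold-and-subdivide line in $\calo(F_N,\calf)$ maps under $\pi$ to an unparametrised quasi-geodesic in $\FF(F_N,\calf)$, with constants depending only on $N$ and $\calf$. Granting this, $\pi(L)$ is an unparametrised quasi-geodesic; it is $\phi$-invariant, and $\phi$ moves points along it. It remains to see that $\pi(L)$ has infinite diameter. Here one uses that $\phi$ is fully irreducible relative to $\calf$: if $\pi(L)$ had bounded diameter, then arbitrarily long sub-segments of the folding line would have shortest loops supported by a bounded set of $(F_N,\calf)$-free factors, and a pigeonhole argument combined with $\phi$-periodicity of $L$ would yield a $\phi$-periodic proper free factor system $\calf<\calf'$, contradicting full irreducibility relative to $\calf$ (equivalently, the attracting lamination of $\phi$ relative to $\calf$ is filling, so no proper invariant free factor system above $\calf$ carries a sublamination). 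Thus $\pi(L)$ is a genuine quasi-line translated nontrivially by $\phi$, so $\phi$ is loxodromic on $\FF(F_N,\calf)$.

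The nonsporadicity hypothesis is used to ensure that $\FF(F_N,\calf)$ is a connected, infinite-diameter Gromov-hyperbolic graph on which ``loxodromic'' is meaningful, and, more substantively, that fully irreducible relative to $\calf$ elements exist and that the periodic folding line above does not degenerate; in the sporadic case every element of $\Out(F_N,\calf)$ fixes the unique proper free factor system, so the statement is vacuous. In short, once the relative flaring / quasi-geodesic property for folding paths is established, the existence of the periodic folding line from relative train-track theory and the non-collapsing of its $\pi$-image are comparatively formal; proving (or correctly importing) that flaring statement in the relative setting is the real work.
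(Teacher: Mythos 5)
You should first note that the paper does not prove this statement at all: it is imported verbatim as Theorem~A of \cite{gupta18}, so there is no internal argument to compare yours against. Your ``only if'' direction is correct and complete: a power of a non--relatively--fully--irreducible $\phi$ fixes a proper free factor system $\calf<\calf'$, a further power fixes each $[A]\in\calf'$, and each such class is a vertex of $\FF(F_N,\calf)$ under the paper's definition, so $\phi$ has a bounded orbit and cannot be loxodromic.

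The ``if'' direction, however, is a strategy outline rather than a proof, and the two steps you lean on are exactly where the content of the theorem lives. The first, the relative analogue of Bestvina--Feighn's theorem that projected folding paths are unparametrised quasi-geodesics \cite{bestvina2014hyperbolicity}, you explicitly flag as the main obstacle; fair enough, but it cannot simply be ``imported'' --- in the free product setting it has to be proved (or located) separately, and it is not contained in the hyperbolicity statements of \cite{HandelMosher14,Guirardelhorbez19} that the paper quotes. The second gap is less visible and more serious: even granting that $\pi(L)$ is a $\phi$-invariant unparametrised quasi-geodesic along which $\phi$ translates, this does not yet give loxodromicity, since an unparametrised quasi-geodesic may have bounded image. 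Your proposed remedy --- a ``pigeonhole argument'' extracting a $\phi$-periodic proper free factor system above $\calf$ from boundedness of $\pi(L)$ --- is not an argument as stated: producing an invariant free factor from recurrence of shortest loops along a folding line is precisely the hard step, which in the absolute case requires the lamination/Whitehead-graph machinery (a folding path whose projection stalls has its candidate loops carried, up to bounded error, by a single free factor, while the attracting lamination of a fully irreducible is carried by no proper free factor). All of these ingredients --- relative laminations, the relative carrying criterion, and the fact that the attracting lamination of a $\calf$-relatively fully irreducible element is not supported in any proper $(F_N,\calf)$-free factor --- must be established in the relative setting; doing so is essentially the content of \cite{gupta18}. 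So the skeleton is the right one and the elementary direction is fine, but the hard direction as written assumes two unproved theorems and the deduction of unbounded orbits from them is incomplete.
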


The following theorem was proved by Handel and Mosher~\cite{HandelMosher20} when the subgroup is finitely generated case and by Guirardel and Horbez~\cite{Guirardelhorbez19} in the general case.

\begin{thm}\cite[Theorem~7.1]{Guirardelhorbez19}\cite[Theorem~A]{HandelMosher20}\label{Theo fully irreducible contained}
Let $H$ be a subgroup of $\IA_N(3)$ and let $\mathcal{F}$ be a maximal proper $H$-invariant free factor system.  Suppose that $\mathcal{F}$ is nonsporadic. Then $H$ contains a fully irreducible outer automorphism relative to $\mathcal{F}$.
\end{thm}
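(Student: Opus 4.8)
The plan is to analyse the isometric action of $H$ on the Gromov-hyperbolic graph $\FF(F_N,\mathcal{F})$ and to show that $H$ must contain a loxodromic element for this action; by \Cref{Theo loxo free factor}, together with the hypothesis that $\mathcal{F}$ is nonsporadic, such an element is precisely a fully irreducible outer automorphism relative to $\mathcal{F}$. Before starting one records why the hypothesis $H\leq\IA_N(3)$ is there: since $\mathcal{F}$ is $H$-invariant it is in particular $H$-periodic, so \Cref{Theo:freefactorconjclassfixed}(1) shows that $H$ fixes every element of $\mathcal{F}$, hence $H\leq\Out(F_N,\mathcal{F})$ and the action on $\FF(F_N,\mathcal{F})$ is defined. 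By Gromov's classification of isometric group actions on hyperbolic spaces, exactly one of the following occurs: (i) $H$ contains a loxodromic element; (ii) the action is elliptic, meaning $H$ has bounded orbits; (iii) the action is horocyclic, meaning $H$ has unbounded orbits, no loxodromic element, and a unique fixed point $\xi$ in the Gromov boundary $\partial\FF(F_N,\mathcal{F})$. In case (i) we are done, so it suffices to derive a contradiction in cases (ii) and (iii).

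Case (ii) is the easier one. A bounded $H$-orbit in $\FF(F_N,\mathcal{F})$ forces the existence of an $H$-invariant proper $(F_N,\mathcal{F})$-free factor system $\mathcal{F}'$, that is, one with $\mathcal{F}<\mathcal{F}'$; this bridge from ``bounded orbit'' to ``invariant free factor system'' is the core input of the cited works (the finitely generated case being Handel--Mosher and the general case Guirardel--Horbez) --- it is essentially the contrapositive of the theorem in the elliptic regime --- and it is where \Cref{Theo:freefactorconjclassfixed}(1) is again used, to pass from $H$-periodicity of the constructed free factor system to $H$-invariance. But an $H$-invariant proper $(F_N,\mathcal{F})$-free factor system strictly contains $\mathcal{F}$, contradicting the maximality of $\mathcal{F}$ among proper $H$-invariant free factor systems.

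Case (iii) is where I expect the real difficulty to lie. The unique fixed point $\xi$ corresponds, under the identification of the boundary of the relative free factor graph with equivalence classes of arational $(F_N,\mathcal{F})$-trees --- Bestvina--Reynolds in the absolute case and Guirardel--Horbez for free products --- to an $H$-invariant algebraic lamination $L$, namely the dual lamination of an arational tree $T$ representing $\xi$. I would then split into two sub-cases according to the dynamics of $H$ on $T$. If some element of $H$ acts on $T$ with a nontrivial expansion factor (equivalently, the expansion homomorphism $H\to\RR_{>0}$ attached to the homothety orbit of $T$ is nontrivial), then a limit-tree and ping-pong argument relative to $\mathcal{F}$ --- in the spirit of Bestvina--Feighn--Handel's work underlying the Tits alternative, using \Cref{Theo virtually abelian is abelian} and \Cref{Prop torsion free} to rule out degenerate algebraic configurations --- upgrades that element to a fully irreducible relative to $\mathcal{F}$ inside $H$, hence a loxodromic, contradicting the absence of loxodromics in case (iii). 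Otherwise every element of $H$ fixes the metric tree $T$, and one uses the rigidity of arational trees (indecomposability, small arc stabilisers) to extract from $T$ a finite piece of combinatorial data preserved by $H$ --- a proper $(F_N,\mathcal{F})$-free factor system above $\mathcal{F}$ --- which is $H$-periodic and hence $H$-invariant by \Cref{Theo:freefactorconjclassfixed}(1), once more contradicting maximality. Making either sub-case rigorous requires the full apparatus of relative currents, algebraic laminations, and the geometry of the space of $(F_N,\mathcal{F})$-trees developed by Handel--Mosher and by Guirardel--Horbez; this is the genuine technical heart of the result.
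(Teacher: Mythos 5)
There is no in-paper proof of \Cref{Theo fully irreducible contained} to compare your attempt against: the paper imports the statement wholesale from Handel--Mosher (for finitely generated $H$) and Guirardel--Horbez (in general) and uses it as a black box, with no argument of its own. Your sketch can therefore only be judged as a standalone reconstruction of what those external sources do.

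Viewed that way, the shape is right but the substance is largely deferred. The Gromov trichotomy for actions on (possibly non-proper) hyperbolic spaces is deployed correctly, and the use of \Cref{Theo loxo free factor} to identify loxodromics with fully irreducibles relative to $\mathcal{F}$ is on point. However, both contradiction cases hinge on precisely the step you label as ``the core input of the cited works'' or ``the genuine technical heart'': that a bounded orbit (elliptic case), or an isometric action on the arational tree (horocyclic, trivial-stretch-factor sub-case), produces an $H$-periodic proper $(F_N,\mathcal{F})$-free factor system strictly above $\mathcal{F}$. That is the whole content of Handel--Mosher and Guirardel--Horbez and you do not supply it; you only close out the nontrivial-stretch-factor sub-case self-containedly, and even there \Cref{Lem stratching factor cyclic}(2) already gives the desired implication directly, so the invocation of limit trees and ping-pong is unnecessary. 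One small inaccuracy worth fixing: the hypothesis $H\leq\IA_N(3)$ is not what makes $H\leq\Out(F_N,\mathcal{F})$ --- that follows immediately from $H$-invariance of $\mathcal{F}$ by definition of the stabiliser --- rather it is needed, as you correctly note later, to promote the $H$-periodic free factor systems arising in the elliptic and horocyclic cases to genuinely $H$-invariant ones via \Cref{Theo:freefactorconjclassfixed}, so that they contradict maximality.
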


We record the following fact, which is a consequence of the description of the Gromov boundary of $\FF(F_N,\mathcal{F})$. It is due to Hamenstädt~\cite{Hamenstadt14} for the case $\mathcal{F}=\varnothing$, and Guirardel and Horbez~\cite{Guirardelhorbez19} for the general case. We refer to \cite[Section~3]{Guirardelhorbez19} for the definition of an $(F_N,\mathcal{F})$-arational tree. 

\begin{prop}\cite[Theorem~3.4]{Guirardelhorbez19}\label{prop fix arational boundary}
Let $\mathcal{F}$ be a nonsporadic free factor system of $F_N$ and let $H$ be a subgroup of $\Out(F_N,\mathcal{F})$. If $H$ has a finite orbit in $\partial_{\infty} \FF(F_N,\mathcal{F})$, then $H$ has a finite index subgroup which fixes the homothety class of an $(F_N,\mathcal{F})$-arational tree.
\end{prop}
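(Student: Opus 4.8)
The plan is to combine the description of the Gromov boundary $\partial_{\infty}\FF(F_N,\mathcal{F})$ as a space of arational trees with the fact that the arational trees giving rise to a fixed boundary point form a finite-dimensional simplex, on which the stabiliser of that point acts by affine automorphisms.

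First I would recall the boundary description: by Hamenstädt when $\mathcal{F}=\varnothing$ and by Guirardel--Horbez in general (here is where one uses that $\mathcal{F}$ is nonsporadic, so that $\FF(F_N,\mathcal{F})$ is hyperbolic of infinite diameter), there is an $\Out(F_N,\mathcal{F})$-equivariant continuous surjection $\pi\colon\mathbb{P}\mathcal{AT}(F_N,\mathcal{F})\to\partial_{\infty}\FF(F_N,\mathcal{F})$, where $\mathbb{P}\mathcal{AT}(F_N,\mathcal{F})$ denotes the space of homothety classes of $(F_N,\mathcal{F})$-arational trees, and where two such trees have the same image under $\pi$ if and only if they have the same dual lamination. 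Since $H$ has a finite orbit in $\partial_{\infty}\FF(F_N,\mathcal{F})$, choose $\xi$ in that orbit; by the orbit--stabiliser relation the subgroup $H_0\coloneqq\Stab_H(\xi)$ has finite index in $H$, and $H_0$ preserves the fibre $\mathcal{S}\coloneqq\pi^{-1}(\xi)$.

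Next I would invoke the structure of $\mathcal{S}$. Fixing a representative $T\in\mathcal{S}$, the fibre $\mathcal{S}$ is precisely the set of homothety classes of $(F_N,\mathcal{F})$-arational trees with the same dual lamination as $T$; this set coincides with the projectivised cone of length measures carried by the underlying topological $\RR$-tree of $T$, which by work of Gaboriau--Levitt and Guirardel--Levitt is a finite-dimensional simplex whose finitely many extreme points are the ergodic length measures — and each of these again determines an $(F_N,\mathcal{F})$-arational tree, since arationality depends only on the underlying topological tree. The group $H_0$ acts on $\mathcal{S}$ by the restriction of the (projectively) linear action on length measures, hence it permutes the finite set $V$ of extreme points of $\mathcal{S}$. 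Consequently $H_1\coloneqq\bigcap_{v\in V}\Stab_{H_0}(v)$ has finite index in $H_0$, hence in $H$, and fixes the homothety class of every $v\in V$; each such $v$ is an $(F_N,\mathcal{F})$-arational tree, which gives the conclusion.

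The main obstacle is the structural input in the third step: that the fibre of $\pi$ over a boundary point is a finite-dimensional simplex whose extreme points are ergodic and arational. This rests on the fine structure theory of arational $\RR$-trees (that they are geometric, of surface or of Levitt type — Reynolds, Bestvina--Reynolds, Coulbois--Hilion--Reynolds), on identifying the equivalence class of a boundary point with a cone of length measures on a single topological tree (which uses the indecomposability/mixing of arational trees), and on the finiteness of the number of ergodic length measures. A minor additional point is that the $H_0$-action on $\mathcal{S}$ is affine for its simplex structure, which is immediate from linearity of the action on length functions. This is precisely the argument carried out in \cite{Guirardelhorbez19}, from which we quote the statement.
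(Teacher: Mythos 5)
Your outline is correct and takes essentially the same route as the source this paper relies on: the paper gives no proof of this proposition but quotes it as \cite[Theorem~3.4]{Guirardelhorbez19}, and your argument (a finite orbit gives a finite-index stabiliser of a boundary point, the fibre of the projection from projectivised $(F_N,\mathcal{F})$-arational trees to $\partial_\infty\FF(F_N,\mathcal{F})$ is a finite-dimensional simplex of length measures with finitely many ergodic extreme points, and a further finite-index subgroup fixes one of these homothety classes) is exactly the mechanism behind that theorem. Since all the structural input you invoke (the boundary description, mixing/indecomposability of arational trees, finiteness of ergodic length measures) comes from the same circle of references, nothing is gained over simply citing the result, which is what the paper does.
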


For the rest of the article, we only need to know some properties of the stabiliser in $\Out(F_N,\mathcal{F})$ of the homothety class $[T]$ of an $(F_N,\mathcal{F})$-arational tree $T$. We have a natural homomorphism \[\mathrm{SF}\colon \Stab([T]) \to \RR_+^\times\] 
given by the stretching factor, whose kernel is denoted by $\Stab_{\Isom}(T)$. The homomorphism $\mathrm{SF}$ has the following properties. 

\begin{lemma}\cite[Lemma~6.2, Proposition~6.3, Corollary~6.12]{Guirardelhorbez19}\label{Lem stratching factor cyclic} The following hold:
\begin{enumerate}
    \item The image of $\mathrm{SF}$ is cyclic. 
    \item For every $\phi \in \Stab([T])$, we have $\mathrm{SF}(\phi) \neq 1$ if and only if $\phi$ is fully irreducible relative to $\mathcal{F}$.
\end{enumerate}
\end{lemma}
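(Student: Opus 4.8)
The plan is to derive both statements from the correspondence between $(F_N,\mathcal{F})$-arational trees and outer automorphisms that are fully irreducible relative to $\mathcal{F}$, together with the acylindrical hyperbolic geometry of the action of $\Out(F_N,\mathcal{F})$ on $\FF(F_N,\mathcal{F})$; this is essentially the argument of Guirardel--Horbez. Write $\xi\in\partial_\infty\FF(F_N,\mathcal{F})$ for the boundary point associated with $T$ by the relative Klarreich-type correspondence of \cite[Section~3]{Guirardelhorbez19}, and note that $\Stab([T])\leq\Stab_{\Out(F_N,\mathcal{F})}(\xi)$.

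For part (2), suppose first that $\mathrm{SF}(\phi)=\lambda\neq 1$. Since $T$ is arational relative to $\mathcal{F}$ and $\phi$ scales its metric by $\lambda\neq 1$, no power of $\phi$ can fix a proper free factor system strictly above $\mathcal{F}$: such a system would, via the scaling, be incompatible with the indecomposability of $T$. Hence $\phi$ is fully irreducible relative to $\mathcal{F}$, and by \Cref{Theo loxo free factor} (recall that $\mathcal{F}$ is nonsporadic) it is loxodromic on $\FF(F_N,\mathcal{F})$. Conversely, if $\phi$ is fully irreducible relative to $\mathcal{F}$ and $\phi[T]=[T]$, then, by the uniqueness of the attracting and repelling arational trees of such an automorphism (the relative analogue of the results of Levitt--Lustig and Bestvina--Feighn--Handel), $T$ is homothetic to $T_+(\phi)$ or to $T_-(\phi)$; on each of these $\phi$ acts by scaling by the relative expansion factor $\lambda(\phi)^{\pm 1}\neq 1$, so $\mathrm{SF}(\phi)\neq 1$.

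For part (1), the image $\Lambda\coloneqq\mathrm{SF}(\Stab([T]))$ is a subgroup of $(\RR_+^\times,\cdot)\cong(\RR,+)$, hence either infinite cyclic, trivial, or dense, and it suffices to exclude the dense case. Since $\Lambda\cong\Stab([T])/\Stab_{\Isom}(T)$ is torsion-free and abelian, it even suffices to show that $\Lambda$ is virtually cyclic. I would obtain this from the structure of $\Stab_{\Out(F_N,\mathcal{F})}(\xi)$: by part (2) the elements of $\Stab([T])$ with non-trivial stretching factor are exactly the elements fully irreducible relative to $\mathcal{F}$, and these act loxodromically on $\FF(F_N,\mathcal{F})$ with $\xi$ as an endpoint of their axis. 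Combining the acylindricity of the action on $\FF(F_N,\mathcal{F})$ with the identification of $\xi$ with a filling arational tree, one shows that any two such elements have a common power modulo $\Stab_{\Isom}(T)$ and that the resulting cyclic quotient is finitely generated --- the free-group counterpart of McCarthy's theorem that the stabiliser of a filling ending lamination in a mapping class group is virtually cyclic. A more hands-on substitute for part of this is available after replacing $\Stab([T])$ by its finite-index subgroup $\Stab([T])\cap\IA_N(3)$: using \Cref{Prop torsion free}, \Cref{Theo:roots}, \Cref{Prop:powercommute}, and the virtual cyclicity of the centraliser of a fully irreducible element, any family of elements of $\Stab([T])\cap\IA_N(3)$ that are fully irreducible relative to $\mathcal{F}$ and have pairwise commensurable stretching factors lies in a single infinite cyclic subgroup; what then remains is precisely to rule out two such elements whose stretching factors have logarithms with irrational ratio.

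The step I expect to be the main obstacle is exactly this last one: controlling $\Stab_{\Out(F_N,\mathcal{F})}(\xi)$ modulo the isometric part, equivalently excluding incommensurable stretching factors. I expect part (2) to be essentially formal once the arational-tree/loxodromic dictionary and \Cref{Theo loxo free factor} are in hand, whereas part (1) genuinely requires the finer geometric input on stabilisers of arational trees supplied by the Guirardel--Horbez analysis, namely the acylindricity of the action on $\FF(F_N,\mathcal{F})$ and the structure of its Gromov boundary.
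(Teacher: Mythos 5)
The paper gives no proof of this lemma; it is recalled verbatim from Guirardel--Horbez \cite[Lemma~6.2, Proposition~6.3, Corollary~6.12]{Guirardelhorbez19}, so there is no internal argument to compare yours against. Evaluating your sketch on its own terms:

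For part (2), the dictionary between fully irreducible elements and arational trees is the right frame, and your converse direction (uniqueness of $T_{\pm}(\phi)$ plus non-trivial expansion factor) is essentially complete. The forward direction is compressed to the point of hand-waving: the claim that ``such a system would, via the scaling, be incompatible with the indecomposability of $T$'' is not a proof. A priori an element could scale $T$ non-trivially while a power of it preserves a proper free factor system strictly above $\mathcal{F}$; one must really use the dense-orbit behaviour of an arational tree restricted to free factors (or a relative train track argument) to derive the contradiction. That is the actual content of \cite[Corollary~6.12]{Guirardelhorbez19}, and it is a step, not a remark.

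Part (1) has a genuine gap, which to your credit you name yourself. Your ``hands-on substitute'' using torsion-freeness, \Cref{Theo:roots}, \Cref{Prop:powercommute}, and the virtual cyclicity of centralisers of fully irreducibles shows at best that elements of $\Stab([T])\cap\IA_N(3)$ that are fully irreducible relative to $\mathcal{F}$ and have multiplicatively commensurable stretching factors lie in a common infinite cyclic group. That conclusion alone only makes the image of $\mathrm{SF}$ locally cyclic, and a locally cyclic subgroup of $(\RR_+^\times,\cdot)$ can still be dense and non-cyclic (for example $\QQ_{>0}$). The exclusion of incommensurable stretching factors among fully irreducible elements fixing $[T]$ is exactly the substance of \cite[Lemma~6.2]{Guirardelhorbez19}; it does not follow from the acylindricity of $\FF(F_N,\mathcal{F})$ combined with the tools assembled in this paper, and your plan correctly defers that step to the source rather than establishing it.
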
  

\subsection{JSJ decompositions of free groups}\label{Section:JSJ}

This section follows the work of Guirardel--Levitt~\cite{guirardellevitt2017jsj}. 

An \emph{$F_N$-tree} is a simplicial tree equipped with an action of $F_N$ by isometries. Let $\cala$ be a finite set of conjugacy classes of finitely generated subgroups of $F_N$. We say that $F_N$ is \emph{one-ended relative to $\cala$} if there does not exist an $F_N$-tree $T$ with trivial edge stabilisers such that, for every $[A]\in \cala$, the group $A$ fixes a point in $T$. Otherwise, the group $F_N$ is \emph{infinitely-ended relative to $\cala$}.

If $F_N$ is one-ended relative to $\cala$, by~\cite[Theorem~9.14]{guirardellevitt2017jsj}, there exists an $F_N$-tree $T_\cala$ with infinite cyclic edge stabilisers called the \emph{JSJ tree relative to $\cala$}. We record some of its properties in the rest of the section. 

The group $\Aut(F_N)$ acts on the set of $F_N$-equivariant isometry classes of $F_N$-trees by precomposition of the action, and this action passes to the quotient to give an action of $\Out(F_N)$ on the set of $F_N$-equivariant isometry classes of $F_N$-trees. We now prove a lemma which describes the action of an element $\phi$ of $\IA_N(3)$ on a JSJ tree whose $F_N$-equivariant isometry class is preserved by $\phi$.

\begin{lemma}\label{Lem:IAactstriviallyquotientgraph}
    Let $\calt$ be the $F_N$-isometry class of a JSJ tree $T$ and let $\phi \in \Stab(\calt) \cap \IA_N(3)$. The graph automorphism of $\Gamma=F_N\backslash T$ induced by $\phi$ is the identity.
\end{lemma}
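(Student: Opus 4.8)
The plan is to show that the induced automorphism of the finite graph $\Gamma = F_N \backslash T$ fixes every vertex and every (unoriented) edge, and moreover fixes the edge orientations, so that it is literally the identity automorphism. The key point is that elements of $\IA_N(3)$ act trivially on $H_1(F_N; \FF_3)$, hence trivially on $H_1(F_N; \ZZ) \otimes \FF_3$, and this rigidity propagates to the graph of groups decomposition encoded by $T$.

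\medskip

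First I would record what $\phi$ does at the level of the graph of groups. Since $\phi \in \Stab(\calt)$, the outer automorphism $\phi$ is represented by an automorphism of $F_N$ that induces an isometry of $T$, and this descends to a graph automorphism $\bar\phi$ of $\Gamma$. The vertex groups $G_v$ (for $v \in V\Gamma$) and edge groups $G_e \cong \ZZ$ (for $e \in E\Gamma$) of the associated graph of groups are permuted by $\bar\phi$ according to how $\bar\phi$ permutes vertices and edges. Reading off the abelianisation from the graph of groups decomposition, $H_1(F_N;\ZZ)$ has a natural filtration whose associated graded involves the abelianisations $H_1(G_v;\ZZ)$ together with a free part of rank equal to the first Betti number $b_1(\Gamma)$ of the underlying graph — the ``loop'' homology coming from $\Gamma$ itself. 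The map $\bar\phi$ permutes the summands $H_1(G_v;\ZZ)$ according to the vertex permutation and acts on the rank-$b_1(\Gamma)$ loop part through the induced map $\bar\phi_* \colon H_1(\Gamma;\ZZ) \to H_1(\Gamma;\ZZ)$.

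\medskip

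Next I would extract the constraint. Because $\phi$ acts as the identity on $H_1(F_N;\FF_3)$, every block of the induced permutation-with-signs on the associated graded of $H_1(F_N;\FF_3)$ must be trivial. Two consequences: (i) $\bar\phi$ cannot nontrivially permute vertices $v$ with $G_v$ of positive rank, nor can it act nontrivially on $H_1(\Gamma;\FF_3)$; (ii) by a theorem of the type already quoted (Theorem~\ref{Theo:freefactorconjclassfixed}, applied to $H = \langle\phi\rangle$ acting on the free factor/graph-of-groups data, together with the fact that the conjugacy classes of the vertex groups are $\phi$-periodic hence $\phi$-fixed), $\bar\phi$ fixes each vertex and each edge of $\Gamma$ \emph{setwise}. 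The remaining issue is orientations of edges: a graph automorphism fixing every vertex and edge could still flip some edges (invert the corresponding $\ZZ$ edge group and, in a loop, reverse a loop in $H_1$). But inverting an edge contributes a $-1$ on a loop class or swaps the two half-edges at the endpoints; over $\FF_3$ the $-1$ is nontrivial ($-1 \neq 1$ in $\FF_3$), so if $\bar\phi$ flipped any edge it would act by $-1$ on some class in $H_1(F_N;\FF_3)$, contradicting $\phi \in \IA_N(3)$. (This is exactly why the modulus $3$, rather than $2$, is used.) Hence $\bar\phi$ fixes all vertices, all edges, and all orientations, i.e. $\bar\phi = \id$.

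\medskip

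The main obstacle I anticipate is the bookkeeping of the ``loop'' part: one must be careful that the rank-$b_1(\Gamma)$ summand of $H_1(F_N;\FF_3)$ coming from the cycles of $\Gamma$ really is a $\bar\phi$-invariant subquotient on which $\bar\phi$ acts through $H_1(\Gamma;\FF_3)$, and that an edge-flip of a separating edge (which contributes nothing to $H_1(\Gamma)$) still forces nontrivial behaviour — this is handled by noting the flip inverts the infinite cyclic edge group, which is visible in the Mayer--Vietoris / graph-of-groups presentation of $H_1(F_N;\FF_3)$ as sign data, again giving a $-1 \neq 1$ obstruction. Alternatively, and perhaps more cleanly, one can avoid edge-flips entirely by invoking Theorem~\ref{Theo:freefactorconjclassfixed}(2): an edge stabiliser $\langle g \rangle$ determines a conjugacy class $[g]$ which is $\phi$-periodic, hence $\phi$-fixed (not just up to inversion), and a flip would send $[g]$ to $[g^{-1}] \neq [g]$ since $\IA_N(3)$ is torsion-free (Proposition~\ref{Prop torsion free}) so $g$ is not conjugate to its inverse. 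That settles orientations directly and makes the argument uniform.
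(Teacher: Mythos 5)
Your high‑level plan (exploit the triviality of the induced action on $H_1(F_N;\FF_3)$, together with \Cref{Theo:freefactorconjclassfixed}, to freeze the combinatorics of $\Gamma$) is the same engine the paper uses, and your observations about the loop quotient $H_1(F_N;\FF_3)\twoheadrightarrow H_1(\Gamma;\FF_3)$ and about why the modulus $3$ rather than $2$ matters are both correct. Your treatment of edge orientations at the end is also essentially fine (though the reason a nontrivial $g\in F_N$ is not conjugate to $g^{-1}$ is a property of free groups, not a consequence of $\IA_N(3)$ being torsion-free).

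However, the central step labelled (ii) has a genuine gap. You assert that ``the conjugacy classes of the vertex groups are $\phi$-periodic hence $\phi$-fixed'' by \Cref{Theo:freefactorconjclassfixed}, but that theorem only applies to \emph{free factor systems} and to conjugacy classes of \emph{single elements}. The vertex groups of the JSJ tree $T$ are not free factors of $F_N$ — the incident edge stabilisers are infinite cyclic, not trivial — so \Cref{Theo:freefactorconjclassfixed}~$(1)$ says nothing about them, and $(2)$ only covers the case of cyclic vertex or edge stabilisers. Establishing that $\bar\phi$ fixes every vertex and edge setwise is precisely the heart of the matter, and it does not come for free. The paper's proof leans essentially on the fact that $T$ is a \emph{collapsed tree of cylinders for commensurability} (\cite[Theorem~9.14]{guirardellevitt2017jsj}): this gives that distinct vertices with cyclic stabilisers have non-commensurable stabilisers, and that distinct edge stabilisers are commensurable iff they share a cyclic-vertex endpoint. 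With this in hand, the paper distinguishes vertices with non-cyclic stabiliser by their nontrivial, pairwise-distinct images in the quotient $Q=H_1(F_N;\FF_3)/\langle\text{edge stabilisers}\rangle$, and vertices/edges with cyclic stabiliser by \Cref{Theo:freefactorconjclassfixed}~$(2)$ combined with the non-commensurability properties, then carefully splits into the cases $\Gamma$ a tree, $\Gamma_0$ not a circle, $\Gamma_0$ a circle. None of this structural input appears in your proposal, so the claim in (ii) is unsubstantiated and the argument does not close.
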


\begin{proof}
    We follow~\cite[Proposition~2.15]{guirardel2021measure}. First note that, following the terminology of Guirardel-Levitt~\cite{guirardel2011trees}, the tree $T$ is a \emph{collapsed tree of cylinders for commensurability} (see~\cite[Theorem~9.14]{guirardellevitt2017jsj}). In particular, if $v,w \in VT$ are two distinct vertices with cyclic stabilisers, then $G_v$ and $G_w$ are not commensurable. Moreover, two distinct edge stabilisers are commensurable if and only if they have a common endpoint whose stabiliser is infinite cyclic. 
    
    Suppose first that $\Gamma$ is a tree. Let $v,w \in V\Gamma$ be distinct leaves. If $G_v$ is cyclic, then by the above paragraph, $G_w$ is not commensurable with $G_v$. By~\Cref{Theo:freefactorconjclassfixed}~$(2)$, the vertex $v$ is fixed by $\phi$. 
    
    So we may suppose that $G_v$ and $G_w$ are not infinite cyclic. Let $Q$ be the quotient of $H_1(F_N;\FF_3)$ by the group generated by the edge stabilisers of $T$. As edge stabilisers in $T$ are infinite cyclic, the image $Q_v$ and $Q_w$ of $G_v$ and $G_w$ in $Q$ is not trivial and $Q_v \neq Q_w$. As $\phi$ acts as the identity on $Q$, it fixes both $Q_v$ and $Q_w$. Hence $\phi$ fixes $v$ and $w$. Thus, $\phi$ fixes every leaf of $\Gamma$. This shows that $\phi$ fixes pointwise the graph $\Gamma$. This concludes the proof when $\Gamma$ is a tree.

    Suppose that $\Gamma$ is not a tree and let $\Gamma_0 \subseteq \Gamma$ be the subgraph consisting of all the embedded loops in $\Gamma$. We first show that $\phi$ fixes pointwise $\Gamma_0$. Note that $\phi$ also acts as the identity on $H_1(\Gamma;\FF_3)$. Thus, if $\Gamma_0$ is not a circle, then $\phi$ fixes pointwise the graph $\Gamma_0$. 

    Thus, it remains to treat the case when $\Gamma_0$ is a circle. Note that $\phi$ acts as an orientation preserving homeomorphism of $\Gamma_0$ as it acts trivially on $H_1(\Gamma;\FF_3)$. Hence $\phi$ acts as a rotation on $\Gamma_0$. Thus, it suffices to show that $\phi$ fixes a point of $\Gamma_0$ in order to show that $\phi$ fixes pointwise $\Gamma_0$. 
    
    Suppose that there exists $v\in V\Gamma_0$ with infinite cyclic stabiliser. By the first paragraph, the stabiliser of any vertex of $\Gamma_0$ distinct from $v$ is not commensurable with $G_v$. Thus, by~\Cref{Theo:freefactorconjclassfixed}~$(2)$, the stabiliser $G_v$ is fixed elementwise by a representative of $\phi$ and the vertex $v$ is fixed by $\phi$.

    Suppose that no vertex of $\Gamma_0$ has infinite cyclic vertex stabiliser. By the first paragraph, two distinct edges in $\Gamma_0$ have non commensurable edge stabilisers. As $\phi$ fixes the conjugacy class of any edge stabiliser by~\Cref{Theo:freefactorconjclassfixed}~$(2)$, it follows that $\phi$ must fix every edge of $\Gamma_0$.

    In all cases, we see that $\phi$ fixes pointwise $\Gamma_0$. As $\phi$ fixes elementwise the set of leaves of $\Gamma$, this implies that $\phi$ fixes pointwise $\Gamma$.
\end{proof}

We denote by $\Out(F_N,\cala)$ the group of outer automorphisms of $F_N$ preserving $\cala$ and by $\Out(F_N,\cala^{(t)})$ the subgroup of $\Out(F_N,\cala)$ such that, for every $[A]\in \cala$, an element $\phi \in \Out(F_N,\cala^{(t)})$ has a representative fixing $A$ elementwise. We also denote by $\IA(\cala,3)$ (resp. $\IA(\cala^{(t)},3)$) the group $\Out(F_N,\cala) \cap \IA_N(3)$ (resp. $\Out(F_N,\cala^{(t)}) \cap \IA_N(3)$).

\begin{thm}\cite[Theorem~9.14]{guirardellevitt2017jsj}\label{Thm:JSJ}
  Let $\cala$ be a finite set of conjugacy classes of finitely generated subgroups of $F_N$ such that $F_N$ is one-ended relative to $\cala$. The tree $T_\cala$ satisfies the following properties.  

  \begin{enumerate}
      \item Edge stabilisers are infinite cyclic.
      \item For every $[A]\in \cala$, the group $A$ fixes a point in $T_\cala$.
      \item The group $\Out(F_N,\cala)$ preserves the $F_N$-equivariant isometry class of $T$.
      \item We have a partition $VT_\cala=V_1\coprod V_2$ of the vertices of $T_\cala$ such that:
      \begin{enumerate}
          \item for every $v\in V_1$, the group $G_v$ is isomorphic to the fundamental group of a compact hyperbolic surface $\Sigma_v$ with infinite mapping class group such that for every $e\in ET$ adjacent to $v$, the group $G_e$ is contained in a boundary subgroup; 
          \item for every $[A]\in \cala$ and every $v\in G_{v_1}$ the intersection $A \cap G_v$ is contained in a boundary subgroup:
          \item for every $v\in V_1$, the image of the homomorphism $\IA(\cala,3) \to \Out(G_v)$ is contained in $\MCG(\Sigma_v)$;
          \item for every $v\in V_2$, the image $\IA(\cala^{(t)},3) \to \Out(G_v)$ is trivial. In that case, we say that $v$ is \emph{rigid}.
      \end{enumerate}
  \end{enumerate}
\end{thm}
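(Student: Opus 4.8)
The statement is, up to the translations into the language of $\IA_N(3)$, exactly Theorem~9.14 of Guirardel--Levitt~\cite{guirardellevitt2017jsj}, so the plan is to cite that result for the existence and structural properties of the tree $T_\cala$ and then to verify by hand the two clauses, (4)(c) and (4)(d), that are phrased in terms of the congruence subgroup. I would first recall the Guirardel--Levitt machinery: since $F_N$ is finitely generated and one-ended relative to $\cala$, there is a cyclic JSJ tree of $F_N$ in which every $A\in\cala$ is elliptic, and one may take it to be the collapsed tree of cylinders for the commensurability relation on infinite cyclic subgroups. This produces a canonical tree $T_\cala$ and yields items (1)--(3) directly: edge stabilisers are infinite cyclic because the decomposition is over $\ZZ$ and passage to the tree of cylinders preserves cyclic edge groups; ellipticity of the $A$'s is built into the construction; and $\Out(F_N,\cala)$-invariance of the $F_N$-equivariant isometry class of $T_\cala$ is precisely the canonicity of the collapsed tree of cylinders, which is the property already invoked in the proof of \Cref{Lem:IAactstriviallyquotientgraph}.

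Next I would invoke the vertex-group trichotomy for cyclic JSJ decompositions. For torsion-free finitely generated groups with no Baumslag--Solitar subgroups --- in particular for $F_N$ --- the flexible (non universally elliptic) vertices are quadratically hanging: $G_v\cong\pi_1(\Sigma_v)$ for a compact surface $\Sigma_v$ with nonempty boundary, each incident edge group is conjugate into a boundary subgroup, and for every $[A]\in\cala$ the intersection $A\cap G_v$ is conjugate into a boundary subgroup as well (this is the content of being QH \emph{relative to} $\cala$). The degenerate surfaces (disk, Möbius band, annulus, pair of pants) get absorbed into edge or cyclic vertices, so the remaining ones all have infinite mapping class group. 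Declaring $V_1$ to be these QH vertices and $V_2=VT_\cala\setminus V_1$ the rigid ones gives the partition and items (4)(a) and (4)(b).

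It then remains to establish the $\IA_N(3)$-specific clauses. For (4)(c): an element $\phi\in\IA(\cala,3)$ preserves the $F_N$-equivariant isometry class of $T_\cala$ by (3), hence induces a well-defined outer automorphism of each $G_v$; for $v\in V_1$ this outer automorphism must preserve the peripheral structure of $\Sigma_v$ --- the conjugacy classes of its boundary subgroups, which are exactly the incident edge groups together with the $\cala$-intersections --- and so by the Dehn--Nielsen--Baer theorem for compact surfaces with boundary it is realised by a homeomorphism of $\Sigma_v$; thus the image of $\IA(\cala,3)\to\Out(G_v)$ lies in $\MCG(\Sigma_v)$. For (4)(d): a rigid vertex group $G_v$ is one-ended relative to its peripheral structure and admits no cyclic splitting relative to it, so $\Out(G_v;\text{peripheral structure})$ is finite (by the standard Rips--Sela/Paulin degeneration argument, or directly from~\cite{guirardellevitt2017jsj}); an element $\phi\in\IA(\cala^{(t)},3)$ has a representative fixing each $A\in\cala$ elementwise, so its image in $\Out(G_v)$ lands in this finite subgroup, and since $\IA_N(3)$ is torsion free by \Cref{Prop torsion free} the image of $\IA(\cala^{(t)},3)\to\Out(G_v)$ is trivial, so $v$ is rigid in the stated sense.

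The main obstacle is simply that there is no shortcut around the deep external input: the existence of the JSJ, the canonicity of the collapsed tree of cylinders, and the QH-versus-rigid trichotomy of vertex groups are all substantial theorems of~\cite{guirardellevitt2017jsj}. The only genuinely new bookkeeping is (4)(c)--(4)(d), and the subtle point there is to be sure that the peripheral data an element of $\IA(\cala,3)$ is forced to preserve is precisely the incident edge groups together with the $\cala$-intersections at a QH vertex, and that requiring a representative to fix each $A\in\cala$ elementwise reduces the induced automorphism of a rigid vertex to a finite ambiguity that torsion-freeness of $\IA_N(3)$ then eliminates.
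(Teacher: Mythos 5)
The paper never proves this statement itself: it is quoted verbatim as Theorem~9.14 of Guirardel--Levitt, with the clauses involving $\IA(\cala,3)$ and $\IA(\cala^{(t)},3)$ understood as specialisations, so your overall plan (cite the relative cyclic JSJ / tree-of-cylinders machinery for (1)--(3) and (4)(a)--(b), then check the two $\IA$-specific clauses) is the natural reading. The problem is in your verification of (4)(d). You argue that the image of $\IA(\cala^{(t)},3)\to\Out(G_v)$ lands in a finite subgroup because $v$ is rigid, and then conclude triviality ``since $\IA_N(3)$ is torsion free''. That last step is a non sequitur: torsion-freeness of the source says nothing about torsion in the image of a homomorphism --- a torsion-free group surjects onto any finite cyclic group --- so ``finite image'' plus ``torsion-free domain'' does not yield ``trivial image''.

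The mechanism that actually closes this gap, and the one this paper uses in exactly analogous situations (see \Cref{lem:PropertyCF} and the step in the proof of \Cref{Prop existence splitting arational tree} where a finite image in $\Out(G_v)$ is upgraded to a trivial one), is \Cref{Theo:freefactorconjclassfixed}~(2): if $\phi\in\IA(\cala^{(t)},3)$ has finite image in $\Out(G_v)$, then some power of $\phi$ fixes the $F_N$-conjugacy class of every element of $G_v$, hence by Handel--Mosher $\phi$ itself fixes all of these conjugacy classes, and therefore the induced outer automorphism of $G_v$ is trivial (using that an automorphism of a free group fixing all conjugacy classes is inner, and taking the same care about $F_N$-conjugacy versus $G_v$-conjugacy that \Cref{lem:PropertyCF} handles via malnormality-type hypotheses). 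With that substitution your argument for (4)(d) is sound; the remainder of your proposal --- the external inputs for (1)--(3) and (4)(a)--(b), the finiteness of the marked outer automorphism group of a rigid vertex, and the Dehn--Nielsen--Baer argument for (4)(c) --- is consistent with how the cited theorem is meant to be applied.
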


\subsection{Dehn twist outer automorphisms}\label{section:twists}

In this section, we describe some special types of outer automorphisms called \emph{Dehn twist outer automorphisms}. Dehn twist outer automorphisms were intensively studied (see for instance~\cite{cohen1995very,lustig1999conjugacy,levitt2005,rodenhausen2015centralisers}). 

Let $N \geq 2$. If $a \in F_N$, we denote by $\Aut(F_N,a)$ the subgroup of $\Aut(F_N)$ fixing $a$ and by $\Aut(F_N,[a])$ the subgroup of $\Aut(F_N)$ preserving the conjugacy class $[a]$ of $a$. Let $\Out(F_N,[a])$ be the image of $\Aut(F_N,[a])$ in $\Out(F_N)$. These groups are generally called \emph{McCool groups} in the literature~\cite{guirardel2016mccool,bestvina2020mccool}.

In order to define a Dehn twist outer automorphism, we use the JSJ decomposition described in the previous section. 

Let $\phi \in \IA_N(3)$ and let $\cala=\{[\Fix(\Phi)]\}_{\Phi \in \phi}$. Note that $\cala$ also contains conjugacy classes of fixed subgroups of automorphisms in the outer class $\phi$ which are cyclic. The set $\cala$ is a finite set of conjugacy classes of finitely generated subgroups of $F_N$ by~\cite{BesHan92,gaboriau1998index}. Note that $\cala$ is stabilised by the centraliser $C(\phi)$ of $\phi$ in $\IA_N(3)$.

The outer automorphism $\phi$ is a \emph{Dehn twist outer automorphism} if $F_N$ is one ended relative to $\cala$ and, for every vertex $v\in VT_\cala$ of the JSJ tree associated with $\cala$, the vertex $v$ is rigid. Using \Cref{Thm:JSJ}~$(d)$, we see in particular that, the homomorphism $\langle \phi \rangle \to \prod_{v\in V(F_N\backslash T_\cala)} \Out(G_v)$ is trivial. Thus, for every $v\in VT_\cala$, there exists $[A]\in \cala$ such that $G_v \subseteq A$. If $v\in V(F_N \backslash T_\cala)$, we denote by $\Out(G_v,\mathrm{Inc}_v)$ the group of outer automorphisms of $G_v$ preserving the conjugacy classes of the incident edge stabilisers.

Our definition of Dehn twists outer automorphisms is not standard but is equivalent to the usual one for elements of $\IA_N(3)$ (this is a consequence of for instance~\cite[Lemma~5.33]{feighn2019conjugacy}). 

Rodenhausen and Wade~\cite{rodenhausen2015centralisers} described the centraliser of a Dehn twist $\phi\in \IA_N(3)$ in terms of its action on $T_\cala$.

\begin{thm}\label{Thm:sesDehntwist}
    Let $\phi \in \IA_N(3)$ be a Dehn twist. Its centraliser $C(\phi)$ in $\IA_N(3)$ fits in a short exact sequence 
    \[1 \to K \to C(\phi) \to \prod_{v\in V(F_N \backslash T_\cala)} \IA_v(3) \to 1, \] 
    where $K$ is a free abelian group whose dimension is equal to $|E(F_N \backslash T_\cala)|$ and, for every $v\in V(F_N \backslash T_\cala)$, the group $\IA_v(3)$ is a finite index subgroup of the group $\Out(G_v,\mathrm{Inc}_v)$. Moreover, $\phi$ is contained in $K$.
\end{thm}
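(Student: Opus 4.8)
The plan is to reconstruct the Rodenhausen--Wade description of the centraliser of a Dehn twist from its action on the JSJ tree $T_\cala$, and to upgrade their statement to the congruence subgroup $\IA_N(3)$ using the aperiodicity results collected in the preliminaries. First I would recall that, by definition of a Dehn twist, $F_N$ is one-ended relative to $\cala = \{[\Fix(\Phi)]\}_{\Phi\in\phi}$ and every vertex of $T_\cala$ is rigid; by \Cref{Thm:JSJ}~(3) the $F_N$-equivariant isometry class $\calt$ of $T_\cala$ is preserved by all of $\Out(F_N,\cala)$, and since $\cala$ is stabilised by $C(\phi)$ we obtain an action of $C(\phi)$ on $\calt$. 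This yields a homomorphism $C(\phi)\to \Aut(F_N\backslash T_\cala)$ to the finite group of graph automorphisms of the quotient, whose kernel I will call $C_0(\phi)$; by \Cref{Lem:IAactstriviallyquotientgraph} this homomorphism is in fact trivial, so $C(\phi) = C_0(\phi)$ acts on $T_\cala$ inducing the identity on the quotient graph $\Gamma = F_N\backslash T_\cala$.

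The core of the argument is then the standard decomposition of the automorphisms of a graph of groups that induce the identity on the underlying graph. For each $v\in V(F_N\backslash T_\cala)$, restricting an automorphism to the vertex group $G_v$ (using that $G_v$ is its own normaliser in $F_N$, being a nonabelian free factor or a maximal cyclic edge stabiliser situation handled by malnormality, so the restriction is well-defined up to inner) gives a homomorphism $C(\phi) \to \prod_{v} \Out(G_v,\mathrm{Inc}_v)$, where the target records that the conjugacy classes of incident edge stabilisers are preserved since $C(\phi)$ fixes the $F_N$-equivariant isometry class of the tree and acts trivially on $\Gamma$. The kernel $K$ of this map consists of automorphisms acting trivially on the quotient graph and inducing an inner automorphism on each vertex group: by the general theory of such ``twist'' automorphisms (Levitt, Cohen--Lustig, Rodenhausen--Wade) this kernel is generated by the edge twists, one per edge of $F_N\backslash T_\cala$, and these commute because all the vertex and edge groups involved are abelian (infinite cyclic edge groups, and in the relevant vertices the twisting happens inside centralisers of the incident edge elements). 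Hence $K \cong \ZZ^{|E(F_N\backslash T_\cala)|}$ as a subgroup of $C(\phi)$, and $\phi$ itself, being a Dehn twist along $T_\cala$, lies in $K$. Finally, I would restrict the image: an element of $\Out(G_v,\mathrm{Inc}_v)$ arising from $C(\phi)\cap\IA_N(3)$ must act trivially on $H_1(G_v;\FF_3)$ compatibly, and intersecting with this congruence condition cuts out a finite index subgroup $\IA_v(3)\le \Out(G_v,\mathrm{Inc}_v)$; surjectivity onto $\prod_v \IA_v(3)$ follows because any family of vertex automorphisms preserving the incidence data can be glued to a global automorphism of $F_N$ fixing the tree, which then centralises $\phi$ (as $\phi$ is supported on the edge twists, which these vertex automorphisms normalise), and lies in $\IA_N(3)$ after the congruence condition is imposed.

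The main obstacle I anticipate is the passage from $\Out(F_N)$ to $\IA_N(3)$ in the surjectivity and kernel statements: one must check that the twist subgroup $K$ computed inside the $\Out(F_N)$-centraliser is entirely contained in $\IA_N(3)$ (this should follow from \Cref{Prop torsion free} together with the fact that edge twists by elements of the congruence kernel act trivially on $H_1(F_N;\FF_3)$, but it requires care about which edge element one twists by), and that the finite-index subgroups $\IA_v(3)$ are precisely the images and not something smaller --- this is where one needs that $C(\phi)$ is exactly the $\IA_N(3)$-centraliser and that \Cref{Theo:roots} and \Cref{Prop:powercommute} prevent the centraliser from being a proper finite-index overgroup of the expected one. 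A secondary technical point is verifying that the edge twists commute in full generality (not just pairwise for disjoint edges), for which I would cite the relevant computation in Rodenhausen--Wade~\cite{rodenhausen2015centralisers} rather than reproduce it. Since the theorem is explicitly attributed there, the cleanest route is to state that the result is \cite[Theorem ...]{rodenhausen2015centralisers} phrased for $\IA_N(3)$ via \Cref{Lem:IAactstriviallyquotientgraph}, and to indicate the above as the outline of the translation.
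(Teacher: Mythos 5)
The paper contains no proof of \Cref{Thm:sesDehntwist} at all: it is imported verbatim from Rodenhausen--Wade \cite{rodenhausen2015centralisers} (with a nod to Cohen--Lustig), and the surrounding text only records the consequences needed later (e.g.\ via \Cref{Lem:kernelabelian}). So your closing recommendation --- state the result as Rodenhausen--Wade's theorem, phrased for $\IA_N(3)$ with \Cref{Lem:IAactstriviallyquotientgraph} (or \Cref{Theo:freefactorconjclassfixed}) supplying the trivial action on the quotient graph and the preservation of edge-group conjugacy classes --- is exactly the paper's treatment, and is the safe way to present this.

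Your attempted standalone reconstruction, however, should not be passed off as a proof, because the two points you yourself flag as "anticipated obstacles" are precisely the nontrivial content of the cited work. First, surjectivity onto $\prod_{v} \IA_v(3)$ is not just a gluing statement: the image of $C(\phi)\subseteq\IA_N(3)$ in $\prod_v \Out(G_v,\mathrm{Inc}_v)$ must be shown to be a genuine direct product of finite-index subgroups, not merely a finite-index (possibly subdirect) subgroup. Your gluing argument produces an element of $\Out(F_N)$ realising a prescribed tuple of vertex automorphisms, but that element need not lie in $\IA_N(3)$; and one cannot correct it for free, since edge twists do \emph{not} act trivially on $H_1(F_N;\FF_3)$ (a twist about $e$ multiplies a stable letter by the twistor, which is generally nonzero in homology). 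Some argument --- in effect the one in \cite{rodenhausen2015centralisers} --- is needed to see that imposing the congruence condition still cuts out a product over vertices. Second, the claim that the kernel $K$ is free abelian of rank exactly $|E(F_N\backslash T_\cala)|$ does not follow from "the edge twists commute": the twist group of a graph of groups with cyclic edge groups is a quotient of the group generated by twists at the two ends of each edge by the vertex and edge relations, and computing that the result here is $\ZZ^{|E|}$ (rather than smaller rank, or with torsion) uses the specific structure of $T_\cala$; again this is Rodenhausen--Wade's (and Levitt's \cite{levitt2005}) computation, which you rightly say you would cite rather than reproduce. With those two points delegated to the reference, your outline is a fair description of how the theorem is obtained; as a self-contained argument it has genuine gaps at exactly those two steps.
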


We will also use a specific construction of Dehn twists which follows the work of Levitt~\cite{levitt2005}. 

Let $\calt$ be an $F_N$-equivariant isometry class of an $F_N$-tree $T$. The stabiliser $\Stab(\calt)$ of $\calt$ in $\IA_N(3)$ has a natural homomorphism $\Stab(\calt) \to \prod_{v\in V(F_N \backslash T)} \Out(G_v)$. By~\cite[Propositions~2.2, 2.3]{levitt2005} (see also~\cite[Proposition~2.7]{guirardel2021measure}), if every edge stabiliser is finitely generated, the kernel of this homomorphism consists of Dehn twists. Following the terminology of Levitt~\cite{levitt2005}, every bitwist is a Dehn twist. Note that, by Theorem~\ref{Thm:sesDehntwist}, for every Dehn twist $\phi \in \IA_N(3)$, the kernel of the natural homomorphism $C(\phi) \to \prod_{v\in V(F_N \backslash T_\cala)} \IA_v(3)$ consists of Dehn twists. We have in fact the following result.

\begin{lemma}\cite[Proposition~3.1]{levitt2005}\label{Lem:kernelabelian}
    Let $\calt$ be the $F_N$-equivariant isometry class of an $F_N$-tree $T$ with nontrivial finitely generated edge stabilisers. The kernel of $\Stab(\calt) \to \prod_{v\in V(F_N \backslash T)} \Out(G_v)$ is abelian and consists of Dehn twists.
\end{lemma}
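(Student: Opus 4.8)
The claim that $K := \ker\bigl(\Stab(\calt)\to\prod_{v\in V(F_N\backslash T)}\Out(G_v)\bigr)$ consists of Dehn twists is precisely the content of \cite[Propositions~2.2, 2.3]{levitt2005} recorded above, finite generation of the edge groups being exactly what guarantees there that an outer automorphism preserving $\calt$ and acting trivially on each vertex group up to conjugacy must be a product of twists. So the plan is to prove the remaining assertion, that $K$ is abelian; the quickest route is to invoke \cite[Proposition~3.1]{levitt2005}, and I outline the argument one would give.

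First I would recall the explicit description of the elementary twists generating $K$. Writing $\Gamma = F_N\backslash T$ for the quotient graph of groups with vertex groups $G_v$ and edge groups $G_e$, for each oriented edge $(e,v)$ with terminal vertex $v$ and each $z$ in the centraliser $Z_{G_v}(G_e)$ of (the image of) $G_e$ in $G_v$ there is a twist $D_{(e,v)}^{z}$ which is the identity on every vertex group and inserts $z$ at the $v$-end of $e$; this is a well-defined automorphism precisely because $z$ centralises $G_e$. By \cite[Propositions~2.2, 2.3]{levitt2005} the group $K$ is generated in $\Out(F_N)$ by these $D_{(e,v)}^{z}$, and a Dehn twist is, by definition, a product of the bitwists $D_{(e,v)}^{z}D_{(\bar e,w)}^{z'}$ associated with individual edges (here $w$ denotes the other endpoint of $e$).

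Next I would verify three commutation relations. Twists supported on distinct edges commute, since they affect disjoint parts of the graph-of-groups presentation. For a fixed oriented edge $(e,v)$ the assignment $z\mapsto D_{(e,v)}^{z}$ is a homomorphism $Z_{G_v}(G_e)\to\Aut(F_N)$, so its image is abelian. The one substantive point is the interaction of the two orientations $(e,v)$ and $(\bar e,w)$ of a single edge: a direct computation with the Bass--Serre normal form shows that the commutator $[D_{(e,v)}^{z},D_{(\bar e,w)}^{z'}]$ is inner --- in the amalgamated-product model it is conjugation by the commutator $[z,z']\in F_N$ --- hence is trivial in $\Out(F_N)$. (When $e$ is a loop, or when $\Gamma$ is not a tree, a little more care is needed about which centralisers occur and about the fact that some $D_{(e,v)}^{z}$ may already be inner, but neither point affects abelianity.) Since $K$ already lies in $\Out(F_N)$, these three facts give that $K$ is abelian.

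The step I expect to be the main obstacle is this opposite-orientation commutator computation, since that is the one place where one genuinely uses the structure of twists rather than just ``trivial on vertex groups up to conjugacy''. It is also worth recording why the hypotheses cannot be dropped: finite generation of the edge groups is what identifies $K$ with the twist group to begin with, and nontriviality is essential too, since for trivial edge groups one is in a free-product situation --- already for the splitting $F_N=\langle x_1\rangle\ast\cdots\ast\langle x_N\rangle$ the corresponding kernel contains the image of the pure basis-conjugating (symmetric) automorphism group, which is non-abelian once $N\geq 3$.
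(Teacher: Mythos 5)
The paper offers no proof of this lemma at all: it is quoted as a black box from \cite[Proposition~3.1]{levitt2005}, whose content is that the group of twists is a quotient of the direct product, over edge ends, of the centralisers $Z_{G_v}(G_e)$ by the images of the centres of the edge and vertex groups. So what is to be assessed is whether your reconstruction of that argument is sound. Your step (1) and your step (3) are fine --- the commutator of the twists at the two ends of an edge is indeed conjugation by $[z,z']$, hence trivial in $\Out(F_N)$ --- but step (2) contains a genuine gap: ``the assignment $z\mapsto D^{z}_{(e,v)}$ is a homomorphism $Z_{G_v}(G_e)\to\Aut(F_N)$, so its image is abelian'' is a non sequitur. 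The image of a homomorphism is abelian only when its source (modulo the kernel) is, and a centraliser $Z_{G_v}(G_e)$ is not abelian for a general graph of groups; correspondingly, the twist group of a general splitting with finitely generated edge groups need not be abelian. What rescues the statement here is exactly the fact you never invoke in the positive direction: each $G_v$ is a subgroup of $F_N$, hence free, and each $G_e$ is \emph{nontrivial}, so $Z_{G_v}(G_e)$ is infinite cyclic. With that one observation, abelianity follows from your (1)--(3), or immediately from Levitt's quotient-of-a-direct-product description.

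The symptom that the gap is real is visible in your own closing remark. You locate the role of nontriviality solely in ``identifying $K$ with the twist group'' versus the free-product situation, but the identification of the kernel with the twist group is what finite generation of edge groups buys (Levitt's Propositions~2.2--2.3, as the paper itself states just before the lemma); nontriviality plays no role there. Since your three commutation facts nowhere use $G_e\neq 1$, they apply verbatim to a splitting with trivial edge stabilisers, where $Z_{G_v}(\{1\})=G_v$, and would then ``prove'' abelian precisely the group of partial conjugations that your final example correctly exhibits as nonabelian for $N\geq 3$. The repair is a single sentence --- nontrivial edge stabilisers in free vertex groups have infinite cyclic centralisers --- but that sentence is the mathematical heart of the lemma, so it must appear in the argument rather than be implied.
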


\section{Property \texorpdfstring{$(C)$}{(C)} for \texorpdfstring{$\Out(F_N)$}{Out(FN)} and consequences}
\label{sec:PropC}
Let $N \geq 2$. Following Lück~\cite[Condition~3.1]{luck2009classifying}, we say that a group $G$ has \emph{Property $(C)$} if, for every infinite order element $h \in G$ and all $g \in G$ and $k,\ell \in \ZZ$, we have 
\[gh^kg^{-1}=h^{\ell} \Rightarrow |k|=|\ell|.\]

In this section, we prove the following.

\begin{prop}\label{Prop:property(C)}
    Let $N \geq 2$. The group $\Out(F_N)$ satisfies Property~$(C)$.
\end{prop}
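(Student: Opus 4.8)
The plan is to argue by contradiction and push the problem into the torsion-free, finite-index subgroup $\IA_N(3)$, where I can exploit the structure of isolators of cyclic subgroups. Suppose $g h^k g^{-1}=h^\ell$ with $h$ of infinite order but $|k|\neq|\ell|$. If $k=0$ then $h^\ell=1$, forcing $\ell=0$ and hence $|k|=|\ell|$, a contradiction; symmetrically $\ell\neq0$. An easy induction gives $g^n h^{k^n} g^{-n}=h^{\ell^n}$ for all $n\geq1$. Applying this with $n=2$ and renaming $(g^{2},k^{2},\ell^{2})$ as $(g,k,\ell)$, I may assume $k,\ell>0$; since replacing $g$ by $g^{-1}$ interchanges the roles of $k$ and $\ell$, and using $|k|\neq|\ell|$, I may further assume $0<k<\ell$. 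Since $\Out(F_N)/\IA_N(3)$ is finite, I fix $D\geq1$ with $x^D\in\IA_N(3)$ for every $x\in\Out(F_N)$ and set $v:=h^D\in\IA_N(3)$ (still of infinite order) and $w:=g^D\in\IA_N(3)$. Raising $gh^kg^{-1}=h^\ell$ to the $D$-th power gives $gv^kg^{-1}=v^\ell$, and taking $n=D$ in the induced identity $g^n v^{k^n} g^{-n}=v^{\ell^n}$ yields, entirely inside $\IA_N(3)$, a relation $w v^{a} w^{-1}=v^{b}$ with $a:=k^D$, $b:=\ell^D$ and $0<a<b$.

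For the heart of the argument I would consider the isolator $M:=\{x\in\IA_N(3):x^n\in\langle v\rangle\text{ for some }n\geq1\}$ of $\langle v\rangle$ in $\IA_N(3)$. If $x,y\in M\setminus\{1\}$, choose $m,n\geq1$ with $x^m=v^s$, $y^n=v^t$ (with $s,t\neq0$, since $\IA_N(3)$ is torsion-free by \Cref{Prop torsion free}); then $x^m$ and $y^n$ commute, hence $x$ and $y$ commute by \Cref{Prop:powercommute}. Thus $M$ is an abelian, torsion-free subgroup of $\IA_N(3)$ in which any two nontrivial elements have a common nontrivial power (namely $x^{mt}=y^{ns}\neq1$), so $M$ has torsion-free rank at most $1$; by \Cref{Theo virtually abelian is abelian} it is finitely generated, whence $M\cong\ZZ$. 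Writing $M=\langle c\rangle$ I have $v=c^e$ for some $e\geq1$. From $wv^aw^{-1}=v^b$ I get $(wvw^{-1})^a=v^b\neq1$, so the infinite cyclic groups $\langle wvw^{-1}\rangle$ and $\langle v\rangle$ are commensurable and therefore have the same isolator; since an automorphism of $\IA_N(3)$ carries the isolator of $\langle v\rangle$ onto that of the image of $v$, conjugation by $w$ gives $wMw^{-1}=M$, hence $wcw^{-1}=c^{\pm1}$. But then
\[ v^b = w v^a w^{-1} = w c^{ea} w^{-1} = c^{\pm ea} = v^{\pm a}, \]
so $b=\pm a$, contradicting $0<a<b$. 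This contradiction proves Property~$(C)$.

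I expect the one genuinely non-elementary input to be the finite generation of abelian subgroups of $\IA_N(3)$ (\Cref{Theo virtually abelian is abelian}): this is exactly what keeps the isolator $M$ from being a non-finitely-generated rank-one group such as $\ZZ[1/n]$ — over which conjugation by $w$ could act as multiplication by the non-unit $b/a$ — and it is precisely the failure of this property that allows a Baumslag--Solitar group $\mathrm{BS}(1,n)$ to violate Property~$(C)$. Everything else is elementary group theory together with the power-commutation statement \Cref{Prop:powercommute}; \Cref{Theo:roots} is not strictly needed here, although it would slightly streamline the identification $M\cong\ZZ$. The descent from $\Out(F_N)$ to $\IA_N(3)$ is routine bookkeeping, the only point of care being to raise to appropriate powers so that $v=h^D$ and $w=g^D$ genuinely lie in $\IA_N(3)$ and $v^a,v^b$ are honest powers of $v$.
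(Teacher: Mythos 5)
Your argument is correct, and it takes a genuinely different route through the core step. After the same initial reduction as the paper (pass to powers lying in $\IA_N(3)$, using that $\Out(F_N)/\IA_N(3)$ is finite), the paper proceeds by showing $\langle \psi\phi\psi^{-1},\phi\rangle$ is cyclic via \Cref{Prop:powercommute} and torsion-freeness, deducing a relation $\psi\phi\psi^{-1}=\phi^m$, and then invoking the Tits alternative for $\Out(F_N)$ to see that the resulting quotient of $\mathrm{BS}(1,m)$ is virtually abelian, before applying the ``virtually abelian implies abelian'' half of \Cref{Theo virtually abelian is abelian} to force $|m|=1$. You instead form the isolator $M$ of $\langle v\rangle$ in $\IA_N(3)$, prove $M\cong\ZZ$ using \Cref{Prop torsion free}, \Cref{Prop:powercommute}, and the \emph{finite generation} half of \Cref{Theo virtually abelian is abelian}, and then observe that the conjugator normalises $M$ and hence acts on it by $\pm 1$, contradicting $0<a<b$. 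This avoids the Tits alternative entirely and isolates exactly the input that blocks a $\mathrm{BS}(1,n)$-type counterexample (as you note, without finite generation $M$ could be $\ZZ[1/n]$), at the cost of a slightly longer bookkeeping reduction; the paper's route is shorter once the Tits alternative is granted, but uses heavier machinery. The small glosses in your write-up (closure of $M$ under products, which follows from the pairwise commutation you establish; the verification that commensurable infinite cyclic subgroups have equal isolators) are routine and correct as stated.
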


\begin{proof}
    Let $\phi \in \Out(F_N)$ be an element of infinite order, and let $\psi \in \Out(F_N)$ and $k,\ell \in \ZZ$, be such that $\psi\phi^k\psi^{-1}=\phi^{\ell}$. We prove that $|k|=|\ell|$.

    Suppose first that $\phi \in \IA_N(3)$. Since $\IA_N(3)$ is a normal subgroup of $\Out(F_N)$, we also have $\psi\phi\psi^{-1} \in \IA_N(3)$. Since $\psi\phi^k\psi^{-1}=\phi^{\ell}$, a power of $\psi\phi\psi^{-1}$ commutes with a power of $\phi$. By  \Cref{Prop:powercommute}, the group $\langle \psi\phi\psi^{-1},\phi \rangle$ is abelian. Since $\IA_N(3)$ is torsion free, the group $\langle \psi\phi\psi^{-1},\phi \rangle$ is cyclic. Thus, there exists $m \in \ZZ$ such that either $\psi\phi\psi^{-1}=\phi^m$ or $\phi=\psi\phi^m\psi^{-1}$.

    We treat the case $\psi\phi\psi^{-1}=\phi^m$, the other one being similar. The group $\langle \psi,\phi \rangle$ is then a quotient of a metabelian Baumslag-Solitar group $\mathrm{BS}(1,m)$. In particular, it does not contain a nonabelian free group. By the Tits alternative for $\Out(F_N)$~\cite{BestvinaFeighnHandel00}, the group $\langle \psi,\phi \rangle$ is virtually abelian. 

    Let $n \geq 1$ be such that $\psi^n \in \IA_N(3)$. By  \Cref{Theo virtually abelian is abelian}, the group $\langle \psi^n, \phi \rangle$ is abelian. Recall that $\psi\phi\psi^{-1}=\phi^m$.   Thus, we have 
    \[\phi=\psi^n\phi\psi^{-n}=\phi^{m^n},\] 
    and so $m^n=1$ and $|m|=1$. As $\psi\phi\psi^{-1}=\phi^m$, this also implies that $|k|=|\ell|$.

    Suppose now that $\phi \notin \IA_N(3)$ and let $m \geq 1$ be such that $\phi^m \in \IA_N(3)$. Then we also have 
    \[\psi\phi^{mk}\psi^{-1}=\phi^{m\ell}.\] 
    By the previous case, we have $|k|=|\ell|$ and this concludes the proof.
\end{proof}

We now outline some consequences of  \Cref{Prop:property(C)}. Similar statements in the case of the mapping class group were proved by Juan-Pineda and Trujillo-Negrete~\cite{JuanPiTru2016}.

\begin{lemma}\label{Lemma:takingpowercenraliser}
    Let $\phi\in \IA_N(3)$. For every $n \geq 1$, we have $C_{\Out(F_N)}(\phi)=C_{\Out(F_N)}(\phi^n)$ and $N_{\Out(F_N)}(\langle \phi \rangle)=N_{\Out(F_N)}(\langle \phi^n \rangle)$.
\end{lemma}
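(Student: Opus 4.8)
The plan is to deduce both equalities from Property~$(C)$ together with the torsion-freeness of $\IA_N(3)$ and the results of \cite{Guerch2022Roots} recalled above. The inclusions $C_{\Out(F_N)}(\phi)\subseteq C_{\Out(F_N)}(\phi^n)$ and $N_{\Out(F_N)}(\langle\phi\rangle)\subseteq N_{\Out(F_N)}(\langle\phi^n\rangle)$ are immediate, so the content is in the reverse inclusions. If $\phi$ has finite order then, since $\IA_N(3)$ is torsion free (\Cref{Prop torsion free}), $\phi$ is trivial and both statements are vacuous; so assume $\phi$ has infinite order, whence so does $\phi^n$.

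For the centraliser statement, I would take $\psi\in C_{\Out(F_N)}(\phi^n)$, so that $\psi\phi^n\psi^{-1}=\phi^n$. Then $\psi\phi\psi^{-1}$ is an element of $\IA_N(3)$ (normality) whose $n$-th power equals $\phi^n$; by \Cref{Theo:roots} applied inside $\IA_N(3)$ we get $\psi\phi\psi^{-1}=\phi$, i.e.\ $\psi\in C_{\Out(F_N)}(\phi)$. Strictly speaking \Cref{Theo:roots} is stated for elements of $\IA_N(3)$, so one must note that $\psi\phi\psi^{-1}\in\IA_N(3)$ and $\phi\in\IA_N(3)$; this is where the hypothesis $\phi\in\IA_N(3)$ is used. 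This already gives $C_{\Out(F_N)}(\phi^n)\subseteq C_{\Out(F_N)}(\phi)$.

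For the normaliser statement, let $\psi\in N_{\Out(F_N)}(\langle\phi^n\rangle)$, so $\psi\phi^n\psi^{-1}=\phi^{n\ell}$ for some $\ell\in\ZZ$; since $\phi^n$ has infinite order, $\ell\neq 0$. By \Cref{Prop:property(C)} (Property~$(C)$ for $\Out(F_N)$) we get $|n|=|n\ell|$, hence $|\ell|=1$, so $\psi\phi^n\psi^{-1}=\phi^{\pm n}$. Set $\phi'=\psi\phi\psi^{-1}\in\IA_N(3)$; then $(\phi')^n=\phi^{\pm n}$, and by \Cref{Theo:roots} inside $\IA_N(3)$ this forces $\phi'=\phi^{\pm 1}$, i.e.\ $\psi\phi\psi^{-1}\in\langle\phi\rangle$ and so $\psi\in N_{\Out(F_N)}(\langle\phi\rangle)$. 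This gives $N_{\Out(F_N)}(\langle\phi^n\rangle)\subseteq N_{\Out(F_N)}(\langle\phi\rangle)$ and completes the proof.

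The main obstacle — really the only subtle point — is making sure Property~$(C)$ is invoked correctly: it gives control only on the \emph{absolute value} of the exponent, so by itself it yields $\psi\phi^n\psi^{-1}=\phi^{\pm n}$ but not $\psi\phi\psi^{-1}=\phi^{\pm 1}$. Bridging from the $n$-th powers back to $\phi$ itself is exactly what \Cref{Theo:roots} (uniqueness of roots in $\IA_N(3)$) supplies, and this step relies essentially on $\phi$ and its conjugates lying in $\IA_N(3)$; without that, roots need not be unique in $\Out(F_N)$. One should also remark that the same argument shows $C_{\IA_N(3)}(\phi)=C_{\IA_N(3)}(\phi^n)$ and the analogous normaliser equality inside $\IA_N(3)$, since every step takes place within $\IA_N(3)$ once $\psi$ is replaced by a suitable power — but for the stated lemma the ambient group $\Out(F_N)$ causes no difficulty, as conjugation preserves $\IA_N(3)$.
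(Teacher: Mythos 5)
Your proof is correct and follows essentially the same route as the paper: the centraliser equality is exactly the paper's argument (conjugate, take $n$-th powers, apply the uniqueness of roots in $\IA_N(3)$ from \Cref{Theo:roots}), and your normaliser argument is a legitimate way of filling in what the paper dismisses as "similar". The only remark worth making is that Property~$(C)$ is not strictly needed there: since $\psi^{-1}$ also normalises $\langle\phi^n\rangle$, writing $\psi\phi^n\psi^{-1}=\phi^{nk}$ and $\psi^{-1}\phi^n\psi=\phi^{nm}$ forces $km=1$, hence $k=\pm1$, after which \Cref{Theo:roots} finishes as in your argument.
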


\begin{proof}
    We prove the result for the centraliser, the proof for the normaliser being similar. Let $n\geq 1$. Since $C_{\Out(F_N)}(\phi) \subseteq C_{\Out(F_N)}(\phi^n)$, it suffices to prove the converse inequality. Let $\psi \in C_{\Out(F_N)}(\phi^n)$. Then $(\psi\phi\psi^{-1})^n=\phi^n$. Since $\phi,\psi\phi\psi^{-1}\in \IA_N(3)$, by  \Cref{Theo:roots}, we have $\psi\phi\psi^{-1}=\phi$.
\end{proof}

Note that \Cref{Lemma:takingpowercenraliser} is not true if we replace $\phi\in \IA_N(3)$ by $\phi \in \Out(F_N)$. Indeed, as $\Out(F_N)$ is centerless, any finite order element $\phi \in \Out(F_N)$ cannot satisfy $C_{\Out(F_N)}(\phi)=C_{\Out(F_N)}(\phi^n)$ for every $n\in \NN$ (see also~\cite{AndrewMartino2022} for an example with infinite order).

\begin{lemma}\cite[Lemma~4.2]{luck2009classifying}\label{Lemma:exaustioncommensurator}
    Let $G$ be a group satisfying Property~$(C)$ and let $C$ be an infinite virtually cyclic subgroup of $G$. For every $k \in \NN$, let $k!C$ be the subgroup of $C$ given by $\{h^{k!}\;|\; h \in C\}$. There exists a nested sequence \[N_G(C) \subseteq N_G(2!C) \subseteq \ldots \subseteq N_G(k!C) \subseteq \ldots\] such that 
    \[N_G[C]=\bigcup_{k \geq 1} N_G(k!C).\]
\end{lemma}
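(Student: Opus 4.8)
The plan is to prove the two inclusions separately: $\bigcup_{k\geq 1}N_G(k!C)\subseteq N_G[C]$ is formal and holds for any infinite virtually cyclic $C$, while $N_G[C]\subseteq\bigcup_{k\geq 1}N_G(k!C)$ is where Property $(C)$ is used; the nested sequence will fall out along the way. First I would record the elementary identity: for $k\leq k'$ one has $k'!C=\{x^{k'!/k!}\mid x\in k!C\}$, since $h^{k'!}=(h^{k!})^{k'!/k!}$. Hence if $g\in G$ stabilises $k!C$ then $g(k'!C)g^{-1}=\{(gxg^{-1})^{k'!/k!}\mid x\in k!C\}=k'!C$, giving $N_G(k!C)\subseteq N_G(k'!C)$ and thus the nested chain. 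For the easy inclusion: $k!C$ contains $\langle t^{k!}\rangle$ for any infinite order $t\in C$, so it is infinite, hence of finite index in the virtually cyclic group $C$; if $g$ stabilises $k!C$ then $k!C\subseteq C\cap gCg^{-1}$, so this intersection is of finite index in both $C$ and $gCg^{-1}$, i.e. $g\in N_G[C]$.

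Next I would reduce the reverse inclusion to the case that $C$ is infinite cyclic. Here one uses the standard fact that $k!C$ is infinite cyclic once $k$ is large: writing $F$ for the maximal finite normal subgroup of $C$, every element of $C$ outside the canonical index-$\leq 2$ subgroup mapping onto $\ZZ$ has finite order, and on that subgroup one checks directly that $k!$-th powers annihilate $F$ and land in a fixed infinite cyclic subgroup, so $k!C=\langle t^{k!}\rangle$ for a suitable infinite order $t$. Fixing $k_0$ with $C_0:=k_0!C$ infinite cyclic, one has $N_G[C_0]=N_G[C]$ since $C_0$ is commensurable with $C$, and the families $\{k!C\}_k$ and $\{k!C_0\}_k$ are mutually cofinal for the relation ``is a power-subgroup of'' (each $k!C$ is a power-subgroup of some $j!C_0$ and conversely), whence $\bigcup_k N_G(k!C)=\bigcup_k N_G(k!C_0)$. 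So we may assume $C=\langle t\rangle$.

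Now comes the core argument. Let $g\in N_G[C]$. Then $\langle t\rangle\cap g\langle t\rangle g^{-1}$ is a finite-index, hence infinite, subgroup of each of the infinite cyclic groups $\langle t\rangle$ and $g\langle t\rangle g^{-1}=\langle gtg^{-1}\rangle$, so it equals $\langle t^{a}\rangle=\langle (gtg^{-1})^{b}\rangle$ for some integers $a,b\geq 1$. Since these two generators generate the same group, $t^{a}=(gt^{b}g^{-1})^{\pm 1}=gt^{\pm b}g^{-1}$, i.e. $g^{-1}t^{a}g=t^{\pm b}$. As $t$ has infinite order and $G$ has Property $(C)$, we conclude $|a|=|b|$, so in fact $g^{-1}t^{a}g=t^{\pm a}$ and $g$ normalises $\langle t^{a}\rangle$. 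Finally, for every $k\geq a$ the subgroup $k!C=\langle t^{k!}\rangle$ is the $(k!/a)$-th power-subgroup of $\langle t^{a}\rangle$, hence is preserved by $g$; that is, $g\in N_G(k!C)$. This gives $N_G[C]\subseteq\bigcup_k N_G(k!C)$ and completes the proof.

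The main obstacle is conceptual rather than computational: it is exactly the invocation of Property $(C)$ that upgrades ``$g$ conjugates $\langle t^{b}\rangle$ onto $\langle t^{a}\rangle$'' to $a=b$, and hence to $g$ genuinely normalising $\langle t^{a}\rangle$; without it $g$ need not preserve any $k!C$. The only genuinely technical step is the reduction to the infinite cyclic case, which is routine bookkeeping with power-subgroups and commensurability and can be dispatched quickly.
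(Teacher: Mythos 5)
Your proof is correct, and since the paper cites this result to L\"uck rather than proving it, there is no in-paper argument to compare against; your reconstruction is the natural one and matches the standard proof. A few small remarks. The nesting follows exactly as you say from the identity $k'!C=\{x^{k'!/k!}:x\in k!C\}$, and the inclusion $\bigcup_k N_G(k!C)\subseteq N_G[C]$ only needs that $k!C$ is an infinite subgroup of $C$ (hence of finite index in both $C$ and $gCg^{-1}$), which you note. The heart of the matter is the reverse inclusion, and your use of Property $(C)$ is exactly right: from $g\in N_G[C]$ with $C=\langle t\rangle$ you extract $g^{-1}t^{a}g=t^{\pm b}$, Property $(C)$ forces $a=b$, so $g$ normalises $\langle t^{a}\rangle$, and then $\langle t^{k!}\rangle$ is characteristic in $\langle t^{a}\rangle$ for $k\geq a$. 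The one place you gloss is the reduction to cyclic $C$: the assertion that $k!$-th powers ``annihilate $F$'' needs a short computation because $(ft^{n})^{k!}\neq f^{k!}t^{nk!}$ in general; one argues via the order $d$ of the conjugation action of $t$ on $F$ that $(ft^{n})^{d|F|}\in\langle t\rangle$, so that $k!C=\langle t^{k!}\rangle$ once $d|F|$ divides $k!$. Your ``mutual cofinality'' bookkeeping is also correct, though slightly backwards as phrased: what you actually need is that each $j!C_0$ contains some $k!C$ as a power-subgroup (giving $N_G(k!C)\subseteq N_G(j!C_0)$ for $j$ large) and conversely, and both hold because, after $k_0!C$ is cyclic, both families are of the form $\langle s^{m}\rangle$ with exponents that interleave for divisibility. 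Alternatively, one can avoid the reduction entirely by choosing $t$ from the start to map to a generator of $C/F\cong\ZZ$ (or of the index-$2$ orientation subgroup in the $D_\infty$ case) and observing directly that $k!C=\langle t^{k!}\rangle$ eventually; the argument then runs in one pass.
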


Recall that, for a group $G$ and a subgroup $C \subseteq G$, the group $N_G[C]$ is the commensurator of $C$ in $G$.

\begin{prop}\label{Prop:Commequalsnormaliser}
    Let $N \geq 2$, let $g\in \Out(F_N)$ be an infinite order element and let $n \geq 1$ be such that $g^n \in \IA_N(3)$. Then 
    \[N_{\Out(F_N)}[\langle g \rangle]=N_{\Out(F_N)}(\langle g^n \rangle).\]
\end{prop}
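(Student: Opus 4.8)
The strategy is to sandwich the commensurator $N_{\Out(F_N)}[\langle g\rangle]$ between two groups that we can identify. For the easy inclusion, note that $\langle g^n\rangle$ is a finite index subgroup of $\langle g\rangle$, so $\langle g^n\rangle$ is commensurable with $\langle g\rangle$; hence any element normalising $\langle g^n\rangle$ certainly commensurates $\langle g\rangle$, giving $N_{\Out(F_N)}(\langle g^n\rangle)\subseteq N_{\Out(F_N)}[\langle g\rangle]$. The work is in the reverse inclusion.

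First I would invoke \Cref{Prop:property(C)}, which tells us $\Out(F_N)$ satisfies Property~$(C)$, so \Cref{Lemma:exaustioncommensurator} applies to the infinite virtually cyclic (indeed infinite cyclic) subgroup $C=\langle g\rangle$: we get
\[N_{\Out(F_N)}[\langle g\rangle]=\bigcup_{k\geq 1}N_{\Out(F_N)}(k!\langle g\rangle).\]
Since $k!\langle g\rangle=\langle g^{k!}\rangle$, it suffices to show that for every $k\geq 1$ we have $N_{\Out(F_N)}(\langle g^{k!}\rangle)\subseteq N_{\Out(F_N)}(\langle g^n\rangle)$; combined with the easy inclusion above this forces equality for all $k$ large enough and gives the claim. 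So fix $\psi\in N_{\Out(F_N)}(\langle g^{k!}\rangle)$. Replacing $k!$ by a larger factorial if necessary, we may assume $n\mid k!$, so that $g^{k!}$ is a power of $g^n$ and, more to the point, $g^{k!}\in\IA_N(3)$.

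Now the key point is to upgrade "$\psi$ normalises $\langle g^{k!}\rangle$" to "$\psi$ centralises $g^{k!}$'' up to adjusting by a sign/power, and then to transfer this back down to $g^n$. Since $\psi$ normalises $\langle g^{k!}\rangle$ and $g^{k!}$ has infinite order, we have $\psi g^{k!}\psi^{-1}=g^{\pm k!}$; by Property~$(C)$ the exponent has absolute value $k!$, consistent with this. If the sign is $+$, then $\psi\in C_{\Out(F_N)}(g^{k!})$, and since $g^{k!}=(g^n)^{k!/n}$ with $g^n\in\IA_N(3)$, \Cref{Lemma:takingpowercenraliser} gives $C_{\Out(F_N)}(g^{k!})=C_{\Out(F_N)}(g^n)\subseteq N_{\Out(F_N)}(\langle g^n\rangle)$, as desired. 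If the sign is $-$, then $\psi^2$ centralises $g^{k!}$; again by \Cref{Lemma:takingpowercenraliser} (applied to $g^n\in\IA_N(3)$) this means $\psi^2\in C_{\Out(F_N)}(g^n)$. To conclude that $\psi$ itself normalises $\langle g^n\rangle$: the relation $\psi g^{k!}\psi^{-1}=g^{-k!}$ shows $\psi g^n\psi^{-1}$ has the same $k!/n$-th power (up to sign) as $g^{-n}$, and since both $\psi g^n\psi^{-1}$ and $g^{-n}$ lie in the normal subgroup $\IA_N(3)$ (using normality of $\IA_N(3)$ in $\Out(F_N)$), \Cref{Theo:roots} yields $\psi g^n\psi^{-1}=g^{-n}\in\langle g^n\rangle$; hence $\psi\in N_{\Out(F_N)}(\langle g^n\rangle)$.

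\textbf{Main obstacle.} The only delicate part is bookkeeping with the sign: one must be careful that $\psi g^{k!}\psi^{-1}=g^{-k!}$ really does force $\psi g^n\psi^{-1}=g^{-n}$ rather than merely $(\psi g^n\psi^{-1})^{k!/n}=g^{-k!}$, which is exactly where the uniqueness of roots in $\IA_N(3)$ (\Cref{Theo:roots}) is essential, together with the observation that $\psi g^n\psi^{-1}$ and $g^{n}$ lie in $\IA_N(3)$ so \Cref{Theo:roots} is applicable. Everything else is a formal consequence of \Cref{Prop:property(C)}, \Cref{Lemma:exaustioncommensurator}, and \Cref{Lemma:takingpowercenraliser}.
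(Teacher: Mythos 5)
Your proof is correct and follows essentially the same route as the paper: Property $(C)$ plus \Cref{Lemma:exaustioncommensurator} to write $N_{\Out(F_N)}[\langle g\rangle]$ as the union of the $N_{\Out(F_N)}(\langle g^{k!}\rangle)$, then \Cref{Lemma:takingpowercenraliser} to identify the terms of the chain with $N_{\Out(F_N)}(\langle g^n\rangle)$. The only difference is cosmetic: your sign analysis via \Cref{Theo:roots} in effect re-proves the normaliser half of \Cref{Lemma:takingpowercenraliser}, which the paper simply cites.
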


\begin{proof}
    See~\cite[Proposition~4.8]{JuanPiTru2016} for the mapping class group case. By  \Cref{Lemma:exaustioncommensurator}, we have 
    \[N_{\Out(F_N)}(\langle g\rangle) \subseteq N_{\Out(F_N)}(\langle g^{2!}\rangle) \subseteq \ldots \subseteq N_{\Out(F_N)}(\langle g^{k!}\rangle) \subseteq \ldots\] 
    and 
    \[N_{\Out(F_N)}[\langle g \rangle]=\bigcup_{k \geq 1} N_{\Out(F_N)}(\langle g^{k!}\rangle).\]
    Since $g^n \in \IA_N(3)$, by \Cref{Lemma:takingpowercenraliser}, for any $k \geq 1$, we have 
    \[N_{\Out(F_N)}(\langle g\rangle) \subseteq \ldots \subseteq N_{\Out(F_N)}(\langle g^{n!}\rangle)=N_{\Out(F_N)}(\langle g^{(n+k)!}\rangle).\] 
    By \Cref{Lemma:takingpowercenraliser} again, we have $N_{\Out(F_N)}(\langle g^{n!}\rangle)=N_{\Out(F_N)}(\langle g^n \rangle)$. Therefore, we see that 
    \[N_{\Out(F_N)}[\langle g \rangle]=N_{\Out(F_N)}(\langle g^n \rangle),\] 
    which concludes the proof.
\end{proof}

Combining  \Cref{Prop:Commequalsnormaliser} and~\cite[Corollary~4.2]{Guerch2022Roots}, we obtain the following.

\begin{thm}\label{Coro comm centraliser}
Let $\phi \in \IA_N(3)$. The commensurator of the cyclic group $\langle\phi\rangle$ in $\IA_N(3)$ is equal to its centraliser.
\hfill\qedsymbol
\end{thm}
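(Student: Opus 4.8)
The plan is to combine the two inputs cited just before the statement: \Cref{Prop:Commequalsnormaliser}, which lets us replace the commensurator of $\langle\phi\rangle$ by the \emph{normaliser} of a finite-index subgroup lying in $\IA_N(3)$, and \cite[Corollary~4.2]{Guerch2022Roots}, which is the ``powers commute'' statement already recorded above as \Cref{Prop:powercommute}. First I would observe that since $\phi\in\IA_N(3)$ we may take $n=1$ in \Cref{Prop:Commequalsnormaliser} (working inside $\IA_N(3)$ rather than $\Out(F_N)$; the same argument applies verbatim with $\Out(F_N)$ replaced by $\IA_N(3)$ throughout, using \Cref{Lemma:takingpowercenraliser} and \Cref{Lemma:exaustioncommensurator}, both of which only need Property $(C)$, inherited by the subgroup $\IA_N(3)$). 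This gives $N_{\IA_N(3)}[\langle\phi\rangle]=N_{\IA_N(3)}(\langle\phi\rangle)$; note the two displayed chains of normalisers both collapse immediately since every power $\phi^{k!}$ already lies in $\IA_N(3)$, so $\Cref{Lemma:takingpowercenraliser}$ forces $N_{\IA_N(3)}(\langle\phi^{k!}\rangle)=N_{\IA_N(3)}(\langle\phi\rangle)$ for all $k$.

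It then remains to show $N_{\IA_N(3)}(\langle\phi\rangle)=C_{\IA_N(3)}(\phi)$. The inclusion $C(\phi)\subseteq N(\langle\phi\rangle)$ is trivial, so take $\psi\in N_{\IA_N(3)}(\langle\phi\rangle)$; then $\psi\phi\psi^{-1}=\phi^{\pm 1}$ (the exponent must be a unit in $\ZZ$ since $\psi$ conjugates $\langle\phi\rangle$ onto itself, and $\phi$ has infinite order by the torsion-freeness \Cref{Prop torsion free} in the case $\phi\neq 1$; if $\phi=1$ there is nothing to prove). In particular $\psi\phi\psi^{-1}$ and $\phi$ have commuting powers — indeed $\phi^2=(\psi\phi\psi^{-1})^{2}$ commutes with $\phi$ trivially, or more simply $\psi\phi^2\psi^{-1}=\phi^{\pm2}$ commutes with $\phi$ — so by \Cref{Prop:powercommute} the elements $\psi\phi\psi^{-1}$ and $\phi$ commute, i.e. $\phi^{\pm1}=\psi\phi\psi^{-1}$ commutes with $\phi$; this is automatic and gives no contradiction directly, so instead I would argue as in the proof of \Cref{Prop:property(C)}: from $\psi\phi\psi^{-1}=\phi^m$ with $m=\pm1$, if $m=-1$ then $\psi^2\phi\psi^{-2}=\phi$ so $\psi^2\in C(\phi)$, whence $\langle\psi,\phi\rangle$ is virtually abelian, hence abelian and finitely generated by \Cref{Theo virtually abelian is abelian}, forcing $\psi\phi\psi^{-1}=\phi$, i.e. $m=1$ after all. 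Thus $\psi\in C_{\IA_N(3)}(\phi)$, completing the reverse inclusion.

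Alternatively, and more cleanly, one can invoke \cite[Corollary~4.2]{Guerch2022Roots} in the exact form it is applied: \Cref{Prop:powercommute} says that commuting powers in $\IA_N(3)$ force the elements themselves to commute, so the abstract fact that $N_G[\langle\phi\rangle]=C_G(\phi)$ whenever $G$ has this property and is torsion free — since any $\psi\in N_G[\langle\phi\rangle]$ satisfies $\psi\phi^a\psi^{-1}=\phi^b$ for some nonzero $a,b$, giving $\psi\phi^{a}\psi^{-1}$ and $\phi^{a}$ commuting powers, hence $\psi\phi\psi^{-1}$ and $\phi$ commute by \Cref{Prop:powercommute} — yields the result once \Cref{Prop:Commequalsnormaliser} has reduced the commensurator to a normaliser. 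The main obstacle is simply to make sure the Property $(C)$ machinery of \Cref{Lemma:exaustioncommensurator} and \Cref{Lemma:takingpowercenraliser}, stated for $\Out(F_N)$, transfers to $\IA_N(3)$; this is harmless because Property $(C)$ passes to subgroups and $\IA_N(3)$ is torsion free, but it should be stated explicitly. No genuinely new computation is needed beyond assembling these pieces.
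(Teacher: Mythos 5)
Your main argument is essentially the paper's intended proof: reduce the commensurator to the normaliser via \Cref{Prop:Commequalsnormaliser}, then pass from normaliser to centraliser using \cite[Corollary~4.2]{Guerch2022Roots} (i.e.\ \Cref{Prop:powercommute}). Your observation that the Property~$(C)$ machinery transfers to $\IA_N(3)$ --- or equivalently that one can just intersect the $\Out(F_N)$-statement of \Cref{Prop:Commequalsnormaliser} with $\IA_N(3)$ --- is correct and worth recording.

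Two remarks. First, in the $m=-1$ branch the detour through $\langle\psi,\phi\rangle$ being virtually abelian and \Cref{Theo virtually abelian is abelian} is unnecessary: once you know $\psi^2\in C(\phi)$, apply \Cref{Prop:powercommute} directly with the commuting powers $\psi^2$ and $\phi^1$ to conclude that $\psi$ and $\phi$ commute. This is exactly the role that the citation of \cite[Corollary~4.2]{Guerch2022Roots} plays in the paper's one-line proof, so your argument, while correct, invokes a stronger theorem than is needed.

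Second, the ``more cleanly'' paragraph at the end does not actually close the argument. From $\psi\phi^a\psi^{-1}=\phi^b$, applying \Cref{Prop:powercommute} to the pair $\psi\phi\psi^{-1}$ and $\phi$ (whose $a$-th powers commute) only yields that $\psi\phi\psi^{-1}$ commutes with $\phi$ --- which is automatic once $\psi$ normalises $\langle\phi\rangle$ and says nothing about $\psi$ itself centralising $\phi$. You would still have to pin down $\psi\phi\psi^{-1}=\phi^{\pm 1}$ and then run the $\psi^2$ step (or the unique-roots argument via \Cref{Theo:roots}) to finish, so the ``alternative'' is not genuinely shorter than your first argument and should not be presented as a replacement for it.
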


\section{\texorpdfstring{Centralisers of elements in $\mathrm{Out}(F_N)$}{Centralisers of elements in Out(FN)}}\label{sec:centralisers}

Let $N \geq 2$. In this section, we study the centraliser of elements in $\IA_N(3)$. Centralisers of elements of $\IA_N(3)$ play a key role in the construction of a model for $\underline{\underline{E}}\IA_N(3)$ by the L\"uck--Weiermann push-out construction (see~\Cref{LuckWeiermann} below). This is why we need a precise description of the centraliser of an arbitrary element of $\IA_N(3)$. 

Let $\phi \in \IA_N(3)$ be of infinite order and let $\cala=\{[\Fix(\Phi)]\}_{\Phi \in \phi}$. Recall that $\cala$ is a finite set of conjugacy classes of finitely generated subgroups of $F_N$ and that $\cala$ is stabilised by the centraliser $C(\phi)$ of $\phi$ in $\IA_N(3)$. 

The study of the centraliser of $F_N$ will be divided into two parts, depending on whether $F_N$ is one-ended relative to $\cala$ or not.

\subsection{The one-ended case}\label{sec:oneended}

In this section, suppose that $F_N$ is one-ended relative to $\cala=\{[\Fix(\Phi)]\}_{\Phi \in \phi}$. We will study the action of $C(\phi)$, the centraliser of $\phi$ in $\IA_N(3)$ on the JSJ tree $T_\cala$ associated with $\cala$. The main result is the following.

\begin{thm}\label{Thm:sesoneendedcase}
    Let $\phi \in \IA_N(3)$. Suppose that $F_N$ is one-ended relative to $\cala$. Suppose also that $\phi$ is not a Dehn twist. Let $C(\phi)$ be the centraliser of $\phi$ in $\IA_N(3)$. Recall the partition $VT_\cala=V_1\coprod V_2$.

    \begin{enumerate}
        \item The group $C(\phi)$ fits into an exact sequence 
        \[1 \to K' \to C(\phi) \to \ZZ \times \prod_{v \in V_2} \Out(G_v)\] where $K'$ is abelian. 

    \item The image of the projection on the first coordinate $$C(\phi) \to \ZZ$$ is generated by a root of $\phi$. The kernel $K$ satisfies one of the followings.

    \begin{enumerate}
        \item The group $K$ is isomorphic to a subgroup of \[\Out(A) \times \Out(B),\] where $A,B \subseteq F_N$ are such that $\rank(A)+\rank(B)=N+1$ and $\rank(A),\rank(B) \leq N-1$.
        \item The group $K$ is isomorphic to a subgroup of \[\Out(A \ast \langle tst^{-1}\rangle,[s],[tst^{-1}]),\] where $s,t \in F_N$ $A \subseteq F_N$, $\rank(A)=N-1$, $s \in A$ and $t$ is a basis element of $F_N$.
    \end{enumerate}
    \end{enumerate}
\end{thm}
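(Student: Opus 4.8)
The plan is to run the L\"uck--Weiermann machinery on the action of $C(\phi)$ on the JSJ tree $T_\cala$, using the hypothesis that $\phi$ is \emph{not} a Dehn twist to extract a nontrivial surface vertex, and then bootstrap to the stated form. First I would observe that, by \Cref{Thm:JSJ}(3), the group $\Out(F_N,\cala)$ preserves the $F_N$-equivariant isometry class $\calt$ of $T_\cala$; since $\cala$ is stabilised by $C(\phi)$ we get $C(\phi)\subseteq \Stab(\calt)$. Hence there is a natural homomorphism $C(\phi)\to\prod_{v\in V(F_N\backslash T_\cala)}\Out(G_v)$, and by \Cref{Lem:kernelabelian} its kernel is abelian and consists of Dehn twists — this will be (part of) the abelian group $K'$. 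By \Cref{Lem:IAactstriviallyquotientgraph}, $C(\phi)\subseteq\IA_N(3)$ acts trivially on the quotient graph $\Gamma=F_N\backslash T_\cala$, so $C(\phi)$ genuinely acts on each vertex group $G_v$ (no permutation of vertices to worry about); by \Cref{Thm:JSJ}(4)(c),(d) the image in $\Out(G_v)$ lands in $\MCG(\Sigma_v)$ for $v\in V_1$ and is trivial for $v\in V_2$ — wait, we are only projecting via $\IA(\cala,3)$ there, so for $v\in V_1$ the image is in $\MCG(\Sigma_v)$ and for $v\in V_2$ I should be careful; the statement we want allows a $\prod_{v\in V_2}\Out(G_v)$ factor on the right, so rigid vertices contribute there.

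Next I would isolate the $\ZZ$ factor and its generator being a root of $\phi$. Since $\phi$ is not a Dehn twist, by the definition in \Cref{section:twists} there is some vertex $v_0\in V_1$ with $G_{v_0}$ a surface group carrying a non-finite-order image of $\phi$ in $\MCG(\Sigma_{v_0})$. The restriction $\phi|_{G_{v_0}}$ is then an infinite-order mapping class; by the Nielsen--Thurston theory (or rather, the relative analogue used elsewhere in the paper: \Cref{Theo fully irreducible contained} and \Cref{Theo loxo free factor} applied to an appropriate free factor system, or the $\FF$-graph loxodromic dichotomy) one extracts a canonical invariant structure preserved by all of $C(\phi)$. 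Concretely, the image of $C(\phi)$ in $\MCG(\Sigma_{v_0})$ centralises the image of $\phi$, and since centralisers of infinite-order mapping classes are virtually of the form (something)$\times\ZZ$ with the $\ZZ$ detecting a root of the reducible/pseudo-Anosov part, one gets a homomorphism $C(\phi)\to\ZZ$. The claim that the image is generated by a \emph{root} of $\phi$ should follow from \Cref{Theo:roots}: if $\psi\in C(\phi)$ maps to a generator and $\phi$ maps to $d$ times it, one compares powers — since both lie in $\IA_N(3)$ and $\psi^d$ and $\phi$ have proportional restrictions, \Cref{Prop:powercommute} and root-uniqueness force $\phi=\psi^{\pm d}$ (up to the Dehn-twist kernel, which is absorbed into $K$). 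I expect I would package the first part as: $K':=\ker(C(\phi)\to \ZZ\times\prod_{v\in V_2}\Out(G_v))$ is abelian because it consists of elements acting trivially on every $G_v$ together with the surface-group data being trivial, hence (via \Cref{Lem:kernelabelian} and the structure of surface-group-stabilising subgroups) abelian.

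The second, harder part is the dichotomy (a)/(b) for $K=\ker(C(\phi)\to\ZZ)$ in terms of a rank-$(N+1)$ free splitting. Here the input is the work of Horbez--Wade \cite{HorbezWade20} and Guirardel--Horbez \cite{Guirardelhorbez19} cited in the introduction, together with a careful analysis of the free factor system generated by $\cala$ and the surface vertex $\Sigma_{v_0}$. The idea: the subgroup $K$ fixes $\phi$ \emph{and} acts trivially on the surface factor, so it preserves both the JSJ data and the arational/lamination data of $\phi|_{\Sigma_{v_0}}$; cutting $\Sigma_{v_0}$ along the reducing curves (or using the arational tree $T$ of \Cref{prop fix arational boundary} and \Cref{Lem stratching factor cyclic}) produces a one-edge free factor decomposition $F_N = A\ast B$ or an HNN-type decomposition $F_N = A\ast\langle t\rangle$ with a distinguished conjugacy class $[s]$, $s\in A$, and then $K$ embeds into the corresponding product of outer automorphism groups / McCool group by the usual "restriction-to-factors" argument, with injectivity because the only elements acting trivially on all pieces are the (already-killed) Dehn twists. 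The rank bookkeeping $\rank(A)+\rank(B)=N+1$ is the Euler-characteristic/Grushko count for a one-edge free splitting, and $\rank(A),\rank(B)\le N-1$ is properness of the factors, which holds because $\cala$ is a nontrivial invariant family forcing both sides to be proper free factors.

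The main obstacle I anticipate is precisely establishing injectivity of $K\hookrightarrow\Out(A)\times\Out(B)$ (resp.\ into the McCool group in case (b)): one must rule out that a nontrivial element of $K$ restricts to the identity on each factor yet acts nontrivially on $F_N$. In the surface setting this is the statement that the mapping class group of a surface injects into the product of mapping class groups of complementary subsurfaces modulo the central Dehn twists along the cutting curves; the $\Out(F_N)$ analogue is more delicate because there is no clean analogue of "the subgroup generated by twists about the reducing system is central," and the paper's \Cref{thm:classificationcentralisers} warns that the relevant homomorphisms need not be surjective. I would handle this by invoking \Cref{Lem:kernelabelian} once more — the kernel of restriction-to-vertex-groups for the \emph{free} splitting $F_N=A\ast B$ (edge group trivial, or infinite cyclic in case (b)) is either trivial or abelian Dehn-twist, hence already contained in $K'$, not in $K$ — so after the initial quotient by $K'$ the map $K\to\Out(A)\times\Out(B)$ is forced to be injective. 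Finally I would merge the pieces: $C(\phi)\to\ZZ\times\prod_{v\in V_2}\Out(G_v)$ with abelian kernel $K'$ is part (1); the refined description of $\ker(C(\phi)\to\ZZ)=K$ as a subgroup of $\Out(A)\times\Out(B)$ or the McCool group $\Out(A\ast\langle tst^{-1}\rangle,[s],[tst^{-1}])$ is part (2).
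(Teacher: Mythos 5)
Your first half is broadly in the right direction — action of $C(\phi)$ on $T_\cala$, abelian kernel via \Cref{Lem:kernelabelian}, trivial action on $\Gamma$ via \Cref{Lem:IAactstriviallyquotientgraph}, extraction of $V_1\neq\varnothing$ from the hypothesis that $\phi$ is not a Dehn twist. But there are several genuine gaps, and the second half of the proposal is built on a misreading of the decomposition that appears in the statement.

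First, for the $\ZZ$-factor you only say that $\phi|_{\Sigma_{v_0}}$ is ``an infinite-order mapping class.'' That is not strong enough: a reducible infinite-order mapping class (e.g.\ a Dehn twist about an interior curve of $\Sigma_{v_0}$) has a large, non-virtually-cyclic centraliser, and your conclusion that the image of $C(\phi)$ in $\MCG(\Sigma_{v_0})$ is essentially cyclic would fail. The paper's mechanism is to show that $\phi|_{\Sigma_v}$ is \emph{pseudo-Anosov}: by \Cref{Thm:JSJ}~(3)(b) any $A\in\cala$ meets $G_v$ only in boundary subgroups, and by \Cref{Theo:freefactorconjclassfixed}~(2) every $\phi$-periodic conjugacy class is actually fixed, so no interior curve class is even virtually preserved. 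Together with the fact that the image of $C(\phi)$ in $\Out(G_v)$ is torsion-free (again \Cref{Theo:freefactorconjclassfixed}, or \Cref{lem:PropertyCF}) and lies in the centraliser of a pseudo-Anosov, you get that it is infinite cyclic and hence generated by a root of $\phi$. Your appeal to generic ``Nielsen--Thurston theory'' and to \Cref{Theo:roots}/\Cref{Prop:powercommute} is a stand-in for this, but the pseudo-Anosov reduction is the step that makes it go through.

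Second, the decomposition in part (2) is not a free splitting. You write ``one-edge free factor decomposition $F_N=A\ast B$'' and justify the rank count by Grushko, but Grushko for a free splitting would give $\rank(A)+\rank(B)=N$, not $N+1$. The statement's $\rank(A)+\rank(B)=N+1$ is the signature of a splitting over an infinite cyclic group. Consequently the ``reducing curves'' approach you suggest is doubly off: there are no reducing curves (since $\phi|_{\Sigma_v}$ is pseudo-Anosov), and even if there were, the decomposition you'd obtain wouldn't be the one in the statement. What the paper does instead: since $\Sigma_v$ is not a pair of pants, pick \emph{any} essential closed geodesic $\gamma$ on $\Sigma_v$ not homotopic to boundary (no $\phi$-invariance needed). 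Blow up $T_\cala$ at $v$ using the cyclic splitting of $\pi_1(\Sigma_v)$ along $\gamma$, then collapse everything but the new edge $e$ to obtain a one-edge $F_N$-tree $U$ with infinite cyclic edge stabiliser preserved by $K=\ker(C(\phi)\to\Out(G_v))$. Injectivity of $K\to\prod_w\Out(G_w)$ is then immediate and concrete: the kernel of this restriction map is the cyclic group generated by the twist $D$ about $e$, and $D$ acts nontrivially on $\pi_1(\Sigma_v)$ (since $G_e$ is conjugate into $\langle\gamma\rangle$) while every element of $K$ acts trivially there. Your alternative injectivity argument --- that the kernel of the restriction map for a free splitting is ``trivial or abelian Dehn-twist, hence contained in $K'$'' --- is false for a free splitting (the kernel there is essentially $A\times B$, far from abelian), and even for a cyclic splitting you would still need the pseudo-Anosov/curve argument to see the twist is detected. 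Finally, the two cases (2)(a) and (2)(b) fall out of whether $U$ is an amalgam $F_N=A\ast_\ZZ B$ or an HNN extension $F_N=(A\ast\langle tst^{-1}\rangle)\ast_\ZZ$; you have the right intuition about ``McCool group'' in the HNN case but the bookkeeping (basis element $t$, $s\in A$, stabilising $[s]$ and $[tst^{-1}]$) needs to come from identifying the vertex group of $U$, not from cutting along a $\phi$-invariant curve.
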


\begin{proof}
    Consider the action of $C(\phi)$ on $T_\cala$. This action gives an exact sequence \[1\to K'\to C(\phi )\to \prod_{v\in VF_N\backslash T_\cala} \Out(G_v).\] By Lemma~\ref{Lem:kernelabelian}, since every edge of $T_\cala$ has infinite cyclic edge stabiliser, the kernel $K'$ is abelian.
    
    Recall the partition $VT_\cala=V_1\coprod V_2$ given by Theorem~\ref{Thm:JSJ}. Note that $\phi\in \Out(F_N,\cala^{(t)})$.

    Suppose towards a contradiction that $VT_\cala=V_2$. By Theorem~\ref{Thm:JSJ}~$(3)(d)$, since $\phi\in \Out(F_N,\cala^{(t)})$, for every $v \in V_2$, the image of $\phi$ in $\Out(G_v)$ is trivial. Hence $\phi$ is contained in the kernel of the homomorphism $C(\phi) \to \prod_{v\in VT} \Out(G_v)$. But the kernel of this homomorphism consists of Dehn twists by \Cref{Lem:kernelabelian}. This contradicts the assumption made on $\phi$. Thus, the set $V_1$ is nontrivial. 
    
    Let $v\in V_1$ and let $\Sigma_v$ be the associated compact hyperbolic surface given by Theorem~\ref{Thm:JSJ}~$(3)(a)$. By Theorem~\ref{Thm:JSJ}~$(3)(b)$, for every $[A]\in \cala$, the intersection of $A$ with $G_v$ is contained in a boundary component of $\phi$. Recall that every $\phi$-periodic conjugacy class of $F_N$ is in fact fixed by Theorem~\ref{Theo:freefactorconjclassfixed}~$(2)$. Thus, by definition of $\cala$, the mapping class of $\Sigma_v$ induced by $\phi$ does not virtually preserve the homotopy class of any curve nonhomotopic to a boundary component. Therefore, the image of $\phi$ in $\MCG(\Sigma_v)$ is a pseudo-Anosov homeomorphism. This implies that its centraliser in $\MCG(\Sigma_v)$ is virtually cyclic. 

    Note that, since $C(\phi) \subseteq \IA_N(3)$, by Theorem~\ref{Theo:freefactorconjclassfixed}~$(2)$, the image of $C(\phi)$ in $\Out(G_v)$ is torsion free. Combining this remark and the above paragraph, we see that the image of $C(\phi)$ in $\Out(G_v)$ is infinite cyclic, generated by a root of $\phi$. Hence, for every $v \in V_1$, the image $C(\phi) \to \Out(G_v)$ is infinite cyclic, generated by a root of $\phi$. Thus, we have the following exact sequence \[1 \to K' \to C(\phi) \to \ZZ \times \prod_{w \in V_2} \Out(G_w)\] where $K'$ is abelian. This proves Assertion~$(1)$.

    We now prove Assertion~$(2)$. Let $v\in V_1$ and let $K=\ker(C(\phi) \to \Out(G_v))$. 

    Note that $\Sigma_v$ is not homeomorphic to a pair of pants as $\MCG(\Sigma_v)$ is infinite (see~\Cref{Thm:JSJ}~$(3)(a)$). Thus, there exists a closed geodesic curve $\gamma$ in $\Sigma_v$ which is not homotopic to a boundary component. This curve induces a splitting $S_v$ of $\Sigma_v$. Since edge groups are all contained in boundary components of $\Sigma_v$, one can blow up $S_v$ at $v$ to obtain a splitting $T'$ of $F_N$ such that $T$ is obtained from $T'$ by collapsing the orbit of an edge $e$. Note that the stabiliser of $e$ is infinite cyclic and its conjugacy class corresponds to the conjugacy class associated with $\gamma$. Moreover, $T'$ is preserved by $K$ since $K$ acts as the identity on $\pi_1(\Sigma_v)$.
    
    Let $U$ be the splitting obtained from $T'$ by collapsing every orbit of edges except the one of $e$. Then, $U$ is preserved by $K$. Thus, we have a homomorphism $K\to \prod_{w\in F_N \backslash U} \Out(G_w)$ whose kernel is infinite cyclic and generated by a twist $D$ about $e$. Since $G_e$ is contained in the conjugacy class of $\gamma$, the twist $D$ does not have a representative which acts as the identity on $\pi_1(\Sigma_v)$.  As $K$ acts as the identity on $\pi_1(\Sigma_v)$, the homomorphism $K\to \prod_{v\in F_N \backslash U} \Out(G_w)$ is injective. 
    
    Since $U$ has one orbit of edges, $U$ induces one of the following splittings of $F_N$: either $F_N=A\ast_\ZZ B$ where $\rank(A)+\rank(B)=N+1$ and $\rank(A),\rank(B) \leq N-1$, which yields case (2)(a), or $F_N=(A \ast \langle x \rangle) \ast_\ZZ$ where $\rank(A)=N-1$ and $x \in F_N$. Moreover, in the second case, there exist $s \in A$ and a basis element $t$ of $F_N$ such that $x=tst^{-1}$ and the vertex stabiliser of $U$ is conjugate to $A \ast \langle tst^{-1} \rangle$. Since $K$ preserves $U$, the image of $K$ in $\Out(A \ast \langle tst^{-1} \rangle)$ preserves $s$ and $tst^{-1}$ which yields case (2)(b). This concludes the proof.
\end{proof}

\subsection{The infinitely-ended-case}\label{sec:infinitelyended}

Let $\phi \in \IA_N(3)$. Suppose now that $F_N$ is not one-ended relative to $\cala=\{[\Fix(\Phi)]\}_{\Phi \in \phi}$. Let $\calf_\cala$ be the minimal free factor system of $F_N$ such that for every $[A]\in \cala$, there exists $[B]\in \calf_\cala$ such that $A\subseteq B$. By minimality of $\calf_\cala$, we have $C(\phi) \subseteq \Out(F_N,\calf_\cala)$. We will consider $C(\phi)$-invariant free factor systems $\calf$ of $F_N$ such that $\calf_\cala \leq \calf$ in order to obtain a description of $C(\phi)$.

\begin{thm}\label{thm:shortexactsequencecentraliser}
    Let $\phi\in \IA_N(3)$. Let $C(\phi)$ be the centraliser of $\phi$ in $\IA_N(3)$. Let $\calf_\cala \leq \calf$ be a maximal proper $C(\phi)$-invariant free factor system.
    \begin{enumerate}
        \item If $\calf$ is nonsporadic, there exists a surjective homomorphism $$C(\phi) \to \ZZ$$ whose image is generated by a root of $\phi$ and such that the kernel $K$ of this homomorphism is isomorphic to a subgroup of $\Aut(A) \times \Aut(B)$, where $A,B \subseteq F_N$ are nontrivial subgroups such that $F_N=A \ast B$.
        \item If $\calf$ is sporadic, then $\calf=\{[A_1],[A_2]\}$ and there exists a homomorphism 
        \[C(\phi) \to \IA(A_1,3) \times \IA(A_2,3)\] 
        whose kernel is a finite index subgroup of a direct product of two finitely generated free (maybe trivial or cyclic) groups.
    \end{enumerate}
\end{thm}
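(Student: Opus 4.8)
The plan is to analyse the action of $C(\phi)$ on the relative free factor graph $\FF(F_N, \calf_\cala)$ and extract structure from the maximality of $\calf$. First I would observe that, since $\calf$ is a maximal proper $C(\phi)$-invariant free factor system with $\calf_\cala \leq \calf$, the group $C(\phi)$ acts on $\FF(F_N, \calf)$ (when $\calf$ is nonsporadic) and, by \Cref{Theo fully irreducible contained} applied with $H = C(\phi)$, $C(\phi)$ contains a fully irreducible element relative to $\calf$; by \Cref{Theo loxo free factor} this element is loxodromic in $\FF(F_N, \calf)$. In fact, since $\cala = \{[\Fix(\Phi)]\}_{\Phi \in \phi}$ and $\phi$ fixes every element of $\cala$ elementwise, a power of $\phi$ should itself be fully irreducible relative to $\calf$ (otherwise there would be a larger $C(\phi)$-invariant free factor system refining $\calf$, using \Cref{Theo:freefactorconjclassfixed}~$(1)$), so $\phi$ is loxodromic. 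Then $C(\phi)$ has a bounded, hence finite, orbit in $\partial_\infty \FF(F_N,\calf)$ — namely the two fixed points of $\phi$ — so by \Cref{prop fix arational boundary} a finite index subgroup of $C(\phi)$ fixes the homothety class $[T]$ of an $(F_N,\calf)$-arational tree $T$; since $\IA_N(3)$ is torsion free and \Cref{Theo:roots} forbids proper roots being created, one checks $C(\phi)$ itself fixes $[T]$.

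Next I would feed this into the stretching factor homomorphism $\mathrm{SF}\colon \Stab([T]) \to \RR_+^\times$ of \Cref{Lem stratching factor cyclic}. Restricting to $C(\phi)$ gives $\mathrm{SF}\colon C(\phi) \to \RR_+^\times$ with cyclic image, and by part~(2) of that lemma the kernel $\Stab_{\Isom}(T) \cap C(\phi)$ consists exactly of the elements of $C(\phi)$ that are \emph{not} fully irreducible relative to $\calf$. Since $\phi$ is fully irreducible relative to $\calf$, it maps to a nontrivial element, so the image is infinite cyclic; combined with \Cref{Theo:roots} the generator of this $\ZZ$ must be a root of $\phi$, giving the surjection $C(\phi) \to \ZZ$ of~(1). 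For the kernel $K = \Stab_{\Isom}(T) \cap C(\phi)$, I would use that elements fixing the isometry type of the arational tree $T$ and acting as identity on homology preserve the underlying free splitting $F_N = A \ast B$ coming from $\calf = \{[A],[B]\}$ — here is where I would need the structure of $(F_N,\calf)$-arational trees from \cite[Section~3]{Guirardelhorbez19}, specifically that the nonsporadic case forces $\calf$ to have the form making $K$ act on each factor, yielding the embedding $K \hookrightarrow \Aut(A)\times\Aut(B)$ (at the level of automorphisms rather than outer automorphisms because the splitting is a single edge with trivial stabiliser).

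For part~(2), $\calf$ sporadic and $\geq \calf_\cala$ with $F_N$ infinitely-ended relative to $\cala$ forces $\calf = \{[A_1],[A_2]\}$ with $F_N = A_1 \ast A_2$ (the one-vertex-group case $\calf = \{[A]\}$, $F_N = A \ast \ZZ$, is handled by passing to the associated Grushko-type splitting). I would then invoke the work of Horbez--Wade \cite{HorbezWade20} and Guirardel--Horbez \cite{Guirardelhorbez19} on stabilisers of free splittings: the restriction homomorphism $\Out(F_N,\calf) \to \Out(A_1)\times\Out(A_2)$ (landing in $\IA(A_1,3)\times\IA(A_2,3)$ after intersecting with $\IA_N(3)$ and using malnormality of the $A_i$) has kernel generated by twists and point-pushing-type automorphisms along the single edge of the splitting, which by the analysis of such kernels is (virtually) a direct product of two finitely generated free groups — the two free groups being the relevant "fringe" or relative automorphism groups of $A_1$ and $A_2$ acting on the attaching data. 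The finite-index qualifier comes from restricting to $\IA_N(3)$.

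The main obstacle I expect is the precise identification of the kernels: in part~(1), extracting from the arationality of $T$ that $K$ genuinely embeds in $\Aut(A)\times\Aut(B)$ (rather than something larger) requires a careful reading of which automorphisms can fix the isometry type of an arational tree; and in part~(2), pinning down that the kernel of the restriction map is a finite index subgroup of a \emph{direct product of two finitely generated free groups} — as opposed to a more complicated group — will need the detailed structural results of Horbez--Wade and a careful bookkeeping of the attaching maps at the single edge of the sporadic splitting. The torsion-freeness and root-uniqueness inputs (\Cref{Prop torsion free}, \Cref{Theo:roots}, \Cref{Theo:freefactorconjclassfixed}) should make the passage from "finite index subgroup fixes $[T]$" to "$C(\phi)$ fixes $[T]$" routine, but the free-product decomposition of the kernels is where the real work lies.
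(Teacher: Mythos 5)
Your skeleton for the nonsporadic case (boundary of $\FF(F_N,\calf)$, arational trees, stretching factor) points in the right direction, but two steps fail as proposed. First, your justification that $\phi$ is fully irreducible relative to $\calf$ does not work: if a power of $\phi$ fixed a larger free factor system $\calf'$, that $\calf'$ would only be $\langle\phi\rangle$-invariant, not $C(\phi)$-invariant, so maximality of $\calf$ among \emph{$C(\phi)$-invariant} systems gives no contradiction. The paper's \Cref{Lem:centraliserfixespointboundary} proves full irreducibility by a different route: $\phi$ preserves the two boundary fixed points of a commuting loxodromic, hence virtually fixes the homothety class of an arational tree, and applying \Cref{Prop existence splitting arational tree} to $\langle\phi\rangle$ the kernel of $\langle\phi\rangle\to\ZZ$ fixes a nonperipheral subgroup elementwise; since $\calf_\cala\leq\calf$, the element $\phi$ fixes no nonperipheral subgroup, so it must project nontrivially. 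This is exactly where the hypothesis $\calf_\cala\leq\calf$ enters, and your sketch never uses it. Second, your mechanism for embedding the kernel $K$ into $\Aut(A)\times\Aut(B)$ is wrong: you say the splitting $F_N=A\ast B$ ``comes from $\calf=\{[A],[B]\}$'', but in case (1) $\calf$ is nonsporadic, so it is not a two-factor decomposition, and an arational tree carries no invariant free splitting to read off. The paper obtains the splitting from Horbez--Wade's commensurator results \cite{HorbezWade20}: the commensurator of the isometric stabiliser preserves a simplicial tree $S$ with a central vertex $v$, kernel elements fix $G_v$ elementwise, the edge groups of $S$ induce a nonsporadic free factor system of $G_v$ giving $G_v=C\ast D$, and blowing up then collapsing yields a one-edge free splitting $F_N=A\ast B$ whose stabiliser is $\Aut(A)\times\Aut(B)$ by Levitt \cite{levitt2005}. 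None of this is recoverable from ``the structure of arational trees'' alone, which is the step you flag as hard but for which the tool you propose does not deliver.

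In the sporadic case the paper does not use Horbez--Wade at all, and the tool you invoke would not give the stated conclusion. By Levitt \cite{levitt2005}, the stabiliser of the sporadic system is a finite index subgroup of $\Aut(A_1)\times\Aut(A_2)$ (or $A\rtimes\Aut(A)$), so the kernel of $C(\phi)\to\Out(A_1)\times\Out(A_2)$ is the intersection of $C(\phi)$ with the group of twists, which is (roughly) $A_1\times A_2$. The point your sketch misses is that an arbitrary subgroup of a product of two free groups need not be a finite index subgroup of a product of two \emph{finitely generated} free groups; what rescues the statement is the centralising condition: a twist commutes with the normalised representative $\Phi$ exactly when its defining elements are fixed by $\Phi|_{A_i}$ (respectively by $\mathrm{ad}_{a^{-1}}\circ\Phi|_A$ in the $F_N=A\ast\ZZ$ case), so the kernel is identified with $\bigl(\Fix(\Phi|_{A_1})\times\Fix(\Phi|_{A_2})\bigr)\cap\IA_N(3)$, and finite generation of each factor is Bestvina--Handel's fixed subgroup theorem \cite{BesHan92}. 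Without this identification the claimed structure of the kernel does not follow, so this is a genuine gap rather than bookkeeping.
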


\begin{remark}
    A key point in the proof of \Cref{thm:shortexactsequencecentraliser} is the fact that every free factor system which has a finite orbit under iteration of an element of $\IA_N(3)$ is in fact fixed (see~\Cref{Theo:freefactorconjclassfixed}). Therefore, it is not clear how to deduce an exact sequence similar to the one of \Cref{thm:shortexactsequencecentraliser} for centralisers in $\Out(F_N)$ instead of in $\IA_N(3)$.
\end{remark}

\begin{remark}\label{Rmk:rankfreefactorsystem}
    In  \Cref{thm:shortexactsequencecentraliser}, the number of elements in the free factor system $\calf$ depends on whether Case~$1$ or Case~$2$ holds. In Case~$2$ the group $A_1$ is never trivial but $A_2$ might be trivial.
    We also have a control on the ranks of the free factors appearing in $\calf$. Indeed, in Case~$2$, for every $[A]\in \calf$, the rank of $A$ is bounded by $N-1$. Moreover,  
    \[\sum_{[A]\in \calf}\mathrm{rank}(A) \leq N.\] 
\end{remark}

In order to prove \Cref{thm:shortexactsequencecentraliser}, we need some results regarding the stabiliser of a relative arational tree in the Gromov boundary of a relative free factor graph. The first one is a combination of a result extracted from \cite{HorbezWade20}, where it is attributed to Guirardel and Levitt and a result of Guirardel and Horbez~\cite{Guirardelhorbez19}.

\begin{prop}\cite{Guirardelhorbez19,HorbezWade20}\label{Prop existence splitting arational tree}
Let $H$ be a subgroup of $\IA_N(3)$. Let $\calf$ be a maximal $H$-invariant free factor system. Suppose that $\calf$ is nonsporadic and that $H$ has a finite index subgroup which fixes the homothety class of an $(F_N,\mathcal{F})$-arational tree $T$. 

\begin{enumerate}
    \item We have a homomorphism $H \to \ZZ \times \prod_{[A]\in \calf} \IA(A,3)$ whose kernel is abelian and consists of Dehn twists.
    \item The image of the projection $p \colon H \to \ZZ$ is surjective and generated by a root of any fully irreducible outer automorphism relative to $\calf$ contained in $H$. 
    \item The kernel $K$ of $p$ is isomorphic to a subgroup of $\Aut(A) \times \Aut(B)$, where $A,B \subseteq F_N$ are nontrivial subgroups such that $F_N=A \ast B$. 
    \item There exists a nonperipheral subgroup $C \subseteq F_N$ such that, for every $k \in K$, the outer automorphism $k$ has a representative fixing $C$ elementwise.
\end{enumerate}
\end{prop}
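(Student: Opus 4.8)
The plan is to obtain \Cref{Prop existence splitting arational tree} by packaging the structure theory for stabilisers of $(F_N,\calf)$-arational trees --- essentially due to Guirardel--Levitt, in the form recorded in \cite{HorbezWade20}, together with Guirardel--Horbez \cite{Guirardelhorbez19} --- and combining it with the rigidity of $\IA_N(3)$ supplied by \Cref{Theo fully irreducible contained,Theo:roots,Lem Tits,Theo:freefactorconjclassfixed}. The genuinely new work is the bookkeeping that turns those inputs into the four stated conclusions for subgroups of $\IA_N(3)$, and the identification of the $\ZZ$-quotient.

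First I would upgrade the hypothesis from a finite-index subgroup of $H$ to $H$ itself. Since $\calf$ is nonsporadic and maximal among $H$-invariant free factor systems, \Cref{Theo fully irreducible contained} gives $\psi\in H$ fully irreducible relative to $\calf$, which by \Cref{Theo loxo free factor} is loxodromic on $\FF(F_N,\calf)$. If $H_0\leq H$ is a finite-index subgroup fixing $[T]$, which (passing to its normal core) we may take normal in $H$, then $H_0$ contains a power of $\psi$, so $[T]$ is one of the two endpoints of the axis of $\psi$ in $\partial_{\infty}\FF(F_N,\calf)$; since $H_0\subseteq\Stab([T])$ and $\mathrm{SF}$ has cyclic image on $\Stab([T])$ by \Cref{Lem stratching factor cyclic}, the subgroup $H_0$ in fact fixes \emph{both} endpoints of that axis, so $H$ permutes this pair of boundary points. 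If some $h\in H$ reversed the pair, then $\langle h,\psi\rangle$ would be contained in the maximal virtually cyclic subgroup of $\Out(F_N)$ containing $\psi$, hence abelian by \Cref{Lem Tits}, contradicting $h\psi h^{-1}\neq\psi$; so $H\subseteq\Stab([T])$. In practice this step is often unnecessary, the group $H$ at hand already fixing $[T]$.

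Next, working inside $\Stab([T])$, I would construct the homomorphisms. Restricting the stretching factor gives $\mathrm{SF}\colon H\to\RR_+^\times$ with cyclic image (\Cref{Lem stratching factor cyclic}(1)), which is infinite since $\mathrm{SF}(\psi)\neq1$ (\Cref{Lem stratching factor cyclic}(2)); fixing an identification of this image with $\ZZ$ yields a surjection $p\colon H\to\ZZ$. By \Cref{Theo:freefactorconjclassfixed}(1) each $[A]\in\calf$ is fixed by $H$, so by malnormality one gets restriction homomorphisms assembling into $H\to\prod_{[A]\in\calf}\Out(A)$, which land in $\prod_{[A]\in\calf}\IA(A,3)$ since $H\subseteq\IA_N(3)$. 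The kernel of the combined map $H\to\ZZ\times\prod_{[A]\in\calf}\IA(A,3)$ consists of isometries of $T$ inducing the identity on every free factor of $\calf$; the Guirardel--Levitt description of $\Stab_{\Isom}(T)$ in terms of the twist subgroup of a graph-of-groups decomposition of $F_N$ relative to $\calf$ (see \cite{HorbezWade20,Guirardelhorbez19}), together with \Cref{Lem:kernelabelian}, identifies this kernel as abelian and consisting of Dehn twists, which is (1). For (2), any fully irreducible relative to $\calf$ element $\psi'\in H$ has $\mathrm{SF}(\psi')\neq1$ (\Cref{Lem stratching factor cyclic}(2)), hence $p(\psi')\neq0$; using the identification of $p$ with the stretching factor and the uniqueness of roots in $\IA_N(3)$ (\Cref{Theo:roots}), one checks that a generator of $p(H)=\ZZ$ is realised by the (well-defined) root of $\psi'$.

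The hard part, where essentially all of the difficulty sits, is (3) and (4): exhibiting a free splitting $F_N=A\ast B$ with $A,B$ nontrivial for which $K=\ker p$ embeds in $\Aut(A)\times\Aut(B)$, and a nonperipheral subgroup $C\subseteq F_N$ such that every $k\in K$ has a representative fixing $C$ elementwise. Here I would invoke the classification of $(F_N,\calf)$-arational trees (Reynolds for $\calf=\varnothing$, Guirardel--Horbez \cite{Guirardelhorbez19} in general): $T$ is either relatively free with dense orbits, or dual to an arational measured foliation on a compact surface $\Sigma$ occurring as a vertex group of a splitting of $F_N$ relative to $\calf$. In either case the description of $\Stab_{\Isom}(T)$ in \cite{HorbezWade20,Guirardelhorbez19} produces the free splitting $F_N=A\ast B$ together with the embedding $K\hookrightarrow\Aut(A)\times\Aut(B)$, and the nonperipheral subgroup $C$ --- for instance an essential interior curve of $\Sigma$ in the surface case --- fixed elementwise by representatives of the elements of $K$. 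Verifying that these data can be chosen so that $K$ behaves as claimed, and that $C$ is genuinely nonperipheral, is precisely the content imported from \cite{HorbezWade20,Guirardelhorbez19}; beyond this and the $\IA_N(3)$-rigidity used above, no further ingredient is needed.
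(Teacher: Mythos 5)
Your outline of assertions (1)--(2) is broadly compatible with the paper's argument, but the place where all the content of (3)--(4) lives is not actually proved: you assert that the description of $\Stab_{\Isom}(T)$ in \cite{HorbezWade20,Guirardelhorbez19} ``produces the free splitting $F_N=A\ast B$ together with the embedding $K\hookrightarrow\Aut(A)\times\Aut(B)$, and the nonperipheral subgroup $C$''. Those references do not provide this; it is precisely the new work in the paper's proof. What Horbez--Wade supply (Lemmas~5.3, 5.6 and Theorem~5.4) is that the commensurator $N_{\IA_N(\calf,3)}[H']$ of the isometric part $H'$ of $H_0$ preserves a specific $F_N$-tree $S$ with nontrivial edge stabilisers, whose quotient is a star with central vertex group $G_{v_0}$, the peripheral vertex groups lying in $\calf$, and with the image of $H'$ in $\Out(G_{v_0})$ trivial. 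The paper then (i) observes $H\subseteq N_{\IA_N(\calf,3)}[H']$, (ii) uses \Cref{Theo:freefactorconjclassfixed} to show the image of $H$ in $\Out(G_{v_0})$ is torsion free, hence infinite cyclic, which is where the $\ZZ$ in (1)--(2) comes from, (iii) takes $C=G_{v_0}$ for (4), and (iv) proves (3) by a blow-up/collapse argument: by \cite[Lemma~5.6]{HorbezWade20} the edge groups of $S$ give a nonsporadic free factor decomposition $G_{v_0}=C\ast D$ preserved by $K$, blowing up $S$ at $v_0$ and collapsing all but the resulting trivially-stabilised edge yields a one-edge free splitting $U$ of $F_N$ preserved by $K$, and Levitt's computation of $\Stab(U)\cong\Aut(A)\times\Aut(B)$ finishes. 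Your alternative route via the dichotomy ``relatively free with dense orbits versus surface-type'' does not close this gap: in the dense-orbits case there is no surface, no interior curve, and no evident nonperipheral subgroup $C$ or invariant free splitting, so (3) and (4) are simply left unestablished.

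Two further points. First, your preliminary step upgrading ``a finite index subgroup fixes $[T]$'' to ``$H\subseteq\Stab([T])$'' rests on the claim that $\langle h,\psi\rangle$ would lie in a maximal virtually cyclic subgroup containing the relative fully irreducible $\psi$; in the relative setting the setwise stabiliser of the pair $\{T_+,T_-\}$ is in general far from virtually cyclic (it contains $\Stab_{\Isom}(T_\pm)$, which can contain large twist groups), so this is unjustified as written. It is also unnecessary: the paper never needs $H$ itself to fix $[T]$, since $H$ automatically lies in the commensurator of $H'$ and it is the commensurator that preserves the canonical tree $S$. Second, if you define $p$ via the stretching factor rather than via the image in $\Out(G_{v_0})$, you still owe an argument that the kernel of the combined map $H\to\ZZ\times\prod_{[A]\in\calf}\IA(A,3)$ acts trivially on all vertex groups of a tree with nontrivial edge stabilisers before \Cref{Lem:kernelabelian} can be applied; this again requires the tree $S$ and \cite[Theorem~5.4]{HorbezWade20}, not merely the statement that $\Stab_{\Isom}(T)$ is ``described by a twist subgroup''.
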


\begin{proof}
    By maximality of $\calf$ and \Cref{Theo fully irreducible contained}, the group $H$ contains a fully irreducible outer automorphism relative to $\calf$. Let $H_0$ be a finite index subgroup of $H$ which fixes the homothety class of $T$. By  \Cref{Lem stratching factor cyclic}, the group $H_0$ splits as a semi-direct product $H' \rtimes \ZZ$, where $H'$ is contained in the isometric stabiliser of $T$. Moreover, the $\ZZ$ factor is generated by a root of any fully irreducible outer automorphism relative to $\calf$ contained in $H_0$.
    
    By~\cite[Lemmas~5.3,~5.6, Theorem~5.4]{HorbezWade20}, the commensurator of $H'$ in $\Out(F_N,\calf) \cap \IA_N(3)$, denoted by $N_{\IA_N(\calf,3)}[H']$, preserves the $F_N$-equivariant isometry class of an $F_N$-tree $S$. 
    
    The tree $S$ satisfies the following properties. Edge stabilisers in $F_N$ are infinite. The quotient graph $F_N \backslash S$ is a tree with one central vertex, $v_0$, adjacent to every other vertex. The stabiliser of $v_0$ in $F_N$ is finitely generated. Moreover, if $v\in V(F_N \backslash S)-\{v_0\}$, then the conjugacy class of the stabiliser $G_v$ of $v$ in $F_N$ is contained in $\calf$.
    
    By \Cref{Theo:freefactorconjclassfixed}, the group $N_{\IA_N(\calf,3)}[H']$ preserves the conjugacy class of every $G_v$ with $v\in V(F_N \backslash S)-\{v_0\}$. Thus, the group $N_{\IA_N(\calf,3)}[H']$ acts trivially on the quotient graph $F_N \backslash S$. Therefore we have a homomorphism 
    \[N_{\IA_N(\calf,3)}[H'] \to  \Out(G_v) \times \prod_{[A] \in \mathcal{F}} \Out(A)\] 
    induced by the action on the vertex stabilisers. Since $N_{\IA_N(\calf,3)}[H'] \subseteq \IA_N(3)$ and since we are considering the restriction homomorphism on vertex stabilisers, the restriction of the image of $H$ in $\prod_{[A] \in \mathcal{F}} \Out(A)$ is contained in $\prod_{[A] \in \mathcal{F}} \IA(A,3)$.

    By~\cite[Theorem~5.4]{HorbezWade20}, the image of $H'$ in $\Out(G_v)$ is trivial and the image of $H_0=H' \rtimes \ZZ$ is infinite cyclic.
    
    Thus, the group $N_{\IA_N(\calf,3)}[H']$ fits into an exact sequence \[1\to K' \to N_{\IA_N(\calf,3)}[H'] \to \Out(G_v) \times \prod_{[A] \in \mathcal{F}} \IA(A,3),\] where $G_v$ is a nonabelian free subgroup of $F_N$ such that the image of $H_0$ in $\Out(G_v)$ is isomorphic to $\ZZ$. 

    Since $H'$ is a normal subgroup of a finite index subgroup of $H$, the group $H$ is contained in $N_{\IA_N(\calf,3)}[H']$. Therefore, we have an exact sequence \[1\to K_0 \to H \to \Out(G_v) \times \prod_{[A] \in \mathcal{F}} \IA(A,3).\] The kernel $K_0$ is the subgroup of $H$ acting trivially on the vertex groups of $T$. Since $T$ has nontrivial edge stabilisers, by \Cref{Lem:kernelabelian}, the group $K_0$ consists of Dehn twists and is abelian.
    
     Thus, in order to prove Assertion~$(1)$, it suffices to prove that the image of $H$ in $\Out(G_v)$ is infinite cyclic. Since the image of $H_0$ in $\Out(G_v)$ is infinite cyclic and since $H_0$ is a finite index subgroup of $H$, we see that the image of $H$ in $\Out(G_v)$ is virtually infinite cyclic. 
    
    Thus, it suffices to prove that the image of $H$ in $\Out(G_v)$ is torsion free. Let $\psi \in H$ whose image in $\Out(G_v)$ is finite. Thus, $\psi$ has a power which preserves the conjugacy class of every element of $G_v$. By  \Cref{Theo:freefactorconjclassfixed}, the outer automorphism $\psi$ preserves the conjugacy class of every element of $G_v$. In particular, the image of $\psi$ in $\Out(G_v)$ is trivial. This implies that the image of $H$ in $\Out(G_v)$ is virtually infinite cyclic and torsion free, hence is infinite cyclic. This proves Assertion~$(1)$. 
    
    As explained above, the image of $H$ in $\Out(G_v)$ is generated by a root of any fully irreducible outer automorphism relative to $\calf$ whose power is contained in $H_0$. Thus, it is generated by a root of any fully irreducible outer automorphism relative to $\calf$ contained in $H$. This proves Assertion~$(2)$.

    We now prove that the kernel $K$ of $p \colon H \to \Out(G_v)$ satisfies Assertions~$(3),(4)$. Note that every element of $K$ has a representative which fixes elementwise $G_v$, which is a nonperipheral subgroup. Assertion~$(4)$ follows.

    It remains to prove Assertion~$(3)$. It suffices to prove that $K$ is isomorphic to a subgroup of $\Aut(A) \times \Aut(B)$, where $A,B \subseteq F_N$ are nontrivial subgroups such that $F_N=A \ast B$. By~\cite[Lemma~5.6]{HorbezWade20}, the edges groups of $S$ induce a nonsporadic free factor system of $G_v$. Thus, there exist nontrivial subgroups $C,D\subseteq G_v$ such that $G_v=C \ast D$ and, for every $e \in ES$, a conjugate of the group $G_e$ is contained in either $C$ or $D$. Since $K$ acts trivially on $G_v$, it also preserves this decomposition. 
    
    Let $S'$ be the splitting obtained from $S$ by blowing up at $v$ the splitting $S_v$ induced by $G_v=C \ast D$ and attaching the edges groups accordingly. Then $S'$ is preserved by $K$ since $K$ preserves both $S$ and $S_v$. Moreover, $S'$ has a unique orbit of an edge $e$ with trivial stabiliser. Since $v$ meets every orbit of edges in $S$, the image of $e$ in $\overline{F_N \backslash S'}$ is a separating edge. 

    Let $U$ be the splitting obtained from $S'$ by collapsing every orbit of edges except the one of $e$. Then $U$ is preserved by $K$ since $K \subseteq \IA_N(3)$. Moreover, the decomposition of $F_N$ induced by $U$ is $F_N=A \ast B$, where $A,B \subseteq F_N$ are nontrivial subgroups such that $F_N=A \ast B$. The stabiliser of this splitting is isomorphic to $\Aut(A) \times \Aut(B)$ by a result of Levitt~\cite{levitt2005}. This concludes the proof.
\end{proof}

\begin{lemma}\label{Lem:centraliserfixespointboundary}
Let $N \geq 2$ and let $\phi \in \IA_N(3)$. Let $\calf_\cala \leq \mathcal{F}$ be a  maximal $C(\phi)$-invariant free factor system. Suppose that $\mathcal{F}$ is nonsporadic. The element $\phi$ is fully irreducible relative to $\calf$. Moreover, the group $C(\phi)$ virtually fixes a point in $\partial_{\infty} \FF(F_N,\mathcal{F})$.
\end{lemma}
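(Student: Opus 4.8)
The plan is to prove the two assertions in order: first that $\phi$ is fully irreducible relative to $\calf$, and then that $C(\phi)$ has a finite orbit in $\partial_\infty \FF(F_N,\calf)$, which by \Cref{prop fix arational boundary} gives the "virtually fixes a point" conclusion (and one can then feed this into \Cref{Prop existence splitting arational tree}).

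First I would show $\phi$ is fully irreducible relative to $\calf$. Suppose not; then by definition there is a proper free factor system $\calf < \calf'$ fixed by some power $\phi^k$. The key obstacle here is passing from a power of $\phi$ to $\phi$ itself, and from $\phi$-invariance to $C(\phi)$-invariance. For the first, invoke \Cref{Theo:freefactorconjclassfixed}~$(1)$: since $\calf'$ is $\langle\phi^k\rangle$-periodic hence $\langle\phi\rangle$-periodic (it is $\phi^k$-fixed), it is actually fixed by $\phi$. Now I need $\calf'$ to be $C(\phi)$-invariant so as to contradict maximality of $\calf$. For this I would argue that $C(\phi)$ permutes the (finite) set of proper free factor systems $\calf''$ with $\calf \le \calf''$ that are $\phi$-fixed — indeed if $\psi \in C(\phi)$ and $\phi(\calf'') = \calf''$, then $\phi(\psi\calf'') = \psi\phi(\psi^{-1}\psi\calf'') = \psi\phi\calf'' = \psi\calf''$ using that $\psi$ commutes with $\phi$ — so $C(\phi)$ acts on this finite set, hence has a finite orbit, hence (again by \Cref{Theo:freefactorconjclassfixed}~$(1)$, or rather by the fact that a finite orbit of free factor systems forces a fixed one after taking joins/meets, and \Cref{Theo:freefactorconjclassfixed} then upgrades "periodic" to "fixed") produces a $C(\phi)$-invariant proper free factor system strictly above $\calf$. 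This contradicts maximality of $\calf$, so $\phi$ is fully irreducible relative to $\calf$.

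Next, for the boundary statement: since $\calf$ is nonsporadic and $\phi$ is fully irreducible relative to $\calf$, by \Cref{Theo loxo free factor} the element $\phi$ acts loxodromically on the Gromov-hyperbolic graph $\FF(F_N,\calf)$, with two fixed points $\phi^{\pm\infty} \in \partial_\infty \FF(F_N,\calf)$. Every element $\psi \in C(\phi)$ commutes with $\phi$ and acts by isometries on $\FF(F_N,\calf)$, hence permutes the fixed-point set $\{\phi^{+\infty},\phi^{-\infty}\}$ of the loxodromic isometry $\phi$ (a standard fact about commuting isometries of hyperbolic spaces: $\psi$ conjugates $\phi$ to itself, so $\psi$ preserves $\mathrm{Fix}(\phi) = \{\phi^{\pm\infty}\}$). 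Thus $C(\phi)$ has an orbit of size at most $2$ in $\partial_\infty \FF(F_N,\calf)$, i.e. a finite orbit.

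I expect the main technical obstacle to be the bookkeeping in the first paragraph — namely making precise that a finite $C(\phi)$-orbit of proper free factor systems above $\calf$ yields a genuinely $C(\phi)$-\emph{invariant} one strictly above $\calf$, contradicting maximality. One clean way: if $\{\calf_1,\dots,\calf_m\}$ is such an orbit, take a common refinement or the meet $\bigwedge_i \calf_i$ in the lattice of free factor systems; this meet is $C(\phi)$-invariant and lies between $\calf$ and each $\calf_i$. One must check it is still $\geq \calf$ and still proper (which follows because each $\calf_i$ is proper and $\geq\calf$, and one can arrange the $\calf_i$ to all strictly contain $\calf$). Alternatively, and perhaps more simply, one sidesteps the lattice entirely: since $C(\phi)$ has finite index issues only up to finite orbits, pass to the finite-index subgroup $C_0 \le C(\phi)$ fixing $\calf'$; but $C_0$ being finite index and $\calf'$ being $C_0$-invariant with $\calf < \calf'$ already contradicts maximality of $\calf$ as a $C(\phi)$-invariant free factor system only if "maximal $C(\phi)$-invariant" is interpreted appropriately — so the meet argument is the safer route. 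Once this is settled the rest is routine hyperbolic-geometry input already packaged in the cited results.
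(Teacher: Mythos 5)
Your second half—once $\phi$ is known to be fully irreducible relative to $\calf$, deduce that $C(\phi)$ has a two-point orbit on $\partial_\infty\FF(F_N,\calf)$—is correct and matches the paper's closing sentence. But the first half, proving $\phi$ is fully irreducible, has genuine gaps, and the paper takes a completely different route that crucially uses the hypothesis $\calf_\cala\leq\calf$, which your argument never touches. Concretely: (i) the asserted finiteness of the set of $\phi$-fixed proper free factor systems $\calf''\geq\calf$ is unjustified and false in general, so $C(\phi)$ need not have a finite orbit on that set; (ii) even granted a finite orbit $\{\calf_1,\dots,\calf_m\}$ with each $\calf_i>\calf$, the meet $\bigwedge_i\calf_i$ can collapse back to $\calf$ (for instance two incomparable systems each carrying a single rank-one free factor, sitting over $\calf=\varnothing$), while the join can fail to be proper, so neither lattice operation manufactures the $C(\phi)$-invariant proper system strictly above $\calf$ you would need to contradict maximality; (iii) without $\calf_\cala\leq\calf$ the conclusion ``$\phi$ is fully irreducible relative to $\calf$'' can simply fail, so a correct proof has to use that hypothesis somewhere, and yours never does.

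The paper's argument instead begins by applying \Cref{Theo fully irreducible contained} with $H=C(\phi)$: since $\calf$ is a maximal $C(\phi)$-invariant nonsporadic free factor system, $C(\phi)$ contains some $\psi$ that is fully irreducible relative to $\calf$. By \Cref{Theo loxo free factor}, $\psi$ acts loxodromically on $\FF(F_N,\calf)$ with exactly two boundary fixed points; since $\phi$ commutes with $\psi$, $\phi$ preserves that pair, and then \Cref{prop fix arational boundary} shows $\phi$ virtually fixes the homothety class of an $(F_N,\calf)$-arational tree. Feeding $H=\langle\phi\rangle$ into \Cref{Prop existence splitting arational tree} produces a short exact sequence $1\to K\to\langle\phi\rangle\to\ZZ$ in which $K$ fixes a nonperipheral subgroup elementwise and the $\ZZ$ image is generated by a root of any fully irreducible element relative to $\calf$. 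This is exactly where $\calf_\cala\leq\calf$ is used: every fixed subgroup of a representative of $\phi$ is carried by $\calf$, so $\phi$ cannot fix a nonperipheral subgroup elementwise, which forces $K=1$; hence $\phi$ maps nontrivially onto the $\ZZ$ factor and is therefore fully irreducible relative to $\calf$. The final step is then as you wrote.
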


\begin{proof}
By  \Cref{Theo fully irreducible contained}, the group $C(\phi)$ contains a fully irreducible element $\psi$ relative to $\mathcal{F}$. 

We claim that $\phi$ is also a fully irreducible element relative to $\mathcal{F}$. Indeed, by \Cref{Theo loxo free factor}, the element $\psi$ is a loxodromic element of $\FF(F_N,\mathcal{F})$. Thus, $\psi$ fixes exactly two points $T_+,T_-$ in $\partial_{\infty} \FF(F_N,\mathcal{F})$. Since $\phi$ commutes with $\psi$, the element $\phi$ preserves $\{T_+,T_-\}$. 

By~\Cref{prop fix arational boundary}, the element $\phi$ virtually fixes the homothety class of an arational $(F_N,\calf)$-tree. Thus, we can apply \Cref{Prop existence splitting arational tree} to see that $\langle \phi \rangle$ fits in an exact sequence $$1 \to K \to \langle \phi \rangle \to \ZZ $$ whose kernel $K$ fixes elementwise a nonperipheral group. 

Since $\calf_\cala \leq \calf$, we see that $\phi$ does not fix elementwise a nonperipheral subgroup. In particular, the group $K$ is trivial. Since the image of $\langle \phi \rangle \to \ZZ$ is generated by any fully irreducible outer automorphism relative to $\calf$ by~\Cref{Prop existence splitting arational tree}~$(2)$, we see that $\phi$ itself is fully irreducible relative to $\calf$.

The moreover part follows from the fact that $C(\phi)$ must preserve the attracting and repelling fixed points of $\phi$ in $\partial_{\infty} \FF(F_N,\mathcal{F})$.
\end{proof}

\begin{proof}[Proof of \Cref{thm:shortexactsequencecentraliser}] Let $\calf \geq \calf_\cala$ be a maximal $C(\phi)$-invariant free factor system. 

\medskip

\noindent{\bf Case 1. } \emph{Suppose that $\calf$ is sporadic.}

Thus, we have $\calf=\{[A],[B]\}$ where $A$ and $B$ might be equal. By for instance \cite[Proposition~4.2]{levitt2005}, the stabiliser of $\calf$ in $\IA_N(3)$ is isomorphic to a finite index subgroup of either $\Aut(A) \times \Aut(B)$ if $[A]\neq [B]$ or of $A \rtimes \Aut(A)$ otherwise. In both cases, we have a homomorphism $C(\phi) \to \prod_{[C]\in \calf} \Out(C)$ whose kernel is a finite index subgroup of a direct product of two free (maybe cyclic or trivial) groups. Since $C(\phi) \subseteq \IA_N(3)$, the image of $C(\phi) \to \prod_{[C]\in \calf} \Out(C)$ is contained in $\prod_{[C]\in \calf} \IA(C,3)$. Thus, it remains to show that both such free groups in the kernel are finitely generated. We treat both cases separately. 

Suppose that $\Stab(\calf)$ is isomorphic to a finite index subgroup of $\Aut(A) \times \Aut(B)$. Let $\Phi \in\phi$ be the unique automorphism in the outer class $\phi$ such that $\Phi(A)=A$ and $\Phi(B)=B$. Then the kernel of the homomorphism $C(\phi) \to\Out(A) \times \Out(B)$ is isomorphic to the intersection of $\IA_N(3)$ with a subgroup isomorphic to $\Fix(\Phi|_A) \times \Fix(\Phi|_B)$. In particular, both direct factors are finitely generated by~\cite{BesHan92}. 

Suppose now that $\Stab(\calf)$ is isomorphic to $A \rtimes \Aut(A)$. In that case, we have $F_N=A \ast \langle g \rangle$ for some $g\in F_N$. Let $\Phi \in \phi$ be the unique representative of $\phi$ sending $A$ to $A$ and $g$ to $ga$ with $a \in A$. Then the kernel of the homomorphism $C(\phi) \to\Out(A)$ is isomorphic to the intersection of $\IA_N(3)$ with a subgroup isomorphic to $\Fix(\Phi|_A) \times \Fix(\mathrm{ad}_{a^{-1}} \circ \Phi|_A)$, so that both direct factors are finitely generated. This concludes the proof when $\calf$ is sporadic.

\medskip

\noindent{\bf Case 2. } \emph{Suppose that $\calf$ is nonsporadic.}

By \Cref{Lem:centraliserfixespointboundary}, the set of fixed points of $C(\phi)$ in $\partial_{\infty}\FF(F_N,\calf)$ is nonempty. By~\Cref{prop fix arational boundary}, the group $C(\phi)$ virtually fixes the homothety class of an $(F_N,\calf)$-arational tree. Thus, we can apply \Cref{Prop existence splitting arational tree} in order to obtain the desired homomorphism $C(\phi)\to \ZZ$. Since $\phi$ is fully irreducible relative to $\calf$ by \Cref{Lem:centraliserfixespointboundary}, a root of $\phi$ generates the image of the homomorphism. This concludes the proof.
\end{proof}

Combining \Cref{Prop existence splitting arational tree}, Theorems~\ref{Thm:sesoneendedcase} and~\ref{thm:shortexactsequencecentraliser}, we obtain the following properties of centralisers of elements of $\IA_N(3)$.

\begin{thm}\label{thm:classificationcentralisers}
    Let $N \geq 2$ and let $\phi \in \IA_N(3)$. The centraliser $C(\phi)$ of $\phi$ in $\IA_N(3)$ satisfies one of the followings.
    \begin{enumerate}
        \item The outer automorphism $\phi$ is a Dehn twist. There exist a JSJ tree $T$ preserved by $C(\phi)$ and a short exact sequence 
    \[1 \to K \to C(\phi) \to \prod_{v\in V(F_N \backslash T)} \IA_v(3) \to 1, \] where $K$ is a free abelian group whose dimension is equal to $|E(F_N \backslash T)|$ and, for every $v\in V(F_N \backslash T)$, the group $\IA_v(3)$ is a finite index subgroup of the group $\Out(G_v,\mathrm{Inc}_v)$. Moreover, $\phi$ is contained in $K$.
        \item There exist $A_1,A_2 \subseteq F_N$ with $F_N=A_1 \ast A_2$, $\rank(A_1),\rank(A_2) \leq N-1$, and a homomorphism
        \[C(\phi) \to \IA(A_1,3) \times \IA(A_2,3)\] 
         whose kernel is a finite index subgroup of a direct product of two finitely generated free (maybe trivial or cyclic) groups.
        \item There exist $A_1,\ldots, A_k,B \subseteq F_N$ nontrivial with $F_N=A_1 \ast \ldots A_k \ast B$, and a homomorphism \[C(\phi) \to \ZZ \times \prod_{i=1}^k \IA(A_i,3)\] whose kernel is abelian and $\phi$ projects onto the $\ZZ$ factor.
        \item There exist a JSJ tree $T$ preserved by $C(\phi)$, a partition $VT=V_1\coprod V_2$, and a homomorphism \[C(\phi) \to \ZZ \times \prod_{v \in V_2} \Out(G_v)\] whose kernel is abelian and $\phi$ projects onto the $\ZZ$ factor.
        \end{enumerate}
\end{thm}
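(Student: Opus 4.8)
The plan is a case analysis on the pair consisting of $\phi$ and the set $\cala=\{[\Fix(\Phi)]\}_{\Phi\in\phi}$, which is a finite, $C(\phi)$-invariant collection of conjugacy classes of finitely generated subgroups of $F_N$. The governing dichotomy is whether $F_N$ is one-ended or infinitely-ended relative to $\cala$; since the definition of a Dehn twist outer automorphism already presupposes that $F_N$ is one-ended relative to $\cala$, this partitions $\IA_N(3)$ into the four classes of elements that match the four conclusions, and the proof reduces to invoking, for each class, the structural result proved earlier in this section and checking that its output repackages into the asserted form. (The degenerate case $\phi=1$ is subsumed under conclusion~$(1)$, with $T_\cala$ a single point, $K$ trivial, and $\prod_v\IA_v(3)=\IA_N(3)$.)

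Suppose first that $F_N$ is one-ended relative to $\cala$. If $\phi$ is a Dehn twist, then \Cref{Thm:sesDehntwist} of Rodenhausen--Wade is precisely conclusion~$(1)$, with $T=T_\cala$; this tree is preserved by $C(\phi)$ because $C(\phi)\subseteq\Out(F_N,\cala)$ and $\Out(F_N,\cala)$ preserves the $F_N$-equivariant isometry class of $T_\cala$ by \Cref{Thm:JSJ}~$(3)$. If $\phi$ is not a Dehn twist, then \Cref{Thm:sesoneendedcase}~$(1)$ gives exactly conclusion~$(4)$: the homomorphism $C(\phi)\to\ZZ\times\prod_{v\in V_2}\Out(G_v)$ with abelian kernel, relative to the partition $VT_\cala=V_1\coprod V_2$ of \Cref{Thm:JSJ}. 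Here one records that ``$\phi$ not a Dehn twist'' forces $V_1\neq\varnothing$, so that the $\ZZ$-factor genuinely appears and, by \Cref{Thm:sesoneendedcase}~$(2)$, $\phi$ projects onto it (through a root).

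Now suppose $F_N$ is infinitely-ended relative to $\cala$, so $\phi$ is automatically not a Dehn twist. Then the minimal free factor system $\calf_\cala$ dominating $\cala$ is proper and $C(\phi)$-invariant; since the poset of free factor systems of $F_N$ is finite, we may pick a maximal proper $C(\phi)$-invariant free factor system $\calf\geq\calf_\cala$ and apply \Cref{thm:shortexactsequencecentraliser}. If $\calf$ is sporadic, its part~$(2)$ provides the homomorphism $C(\phi)\to\IA(A_1,3)\times\IA(A_2,3)$ with the claimed free-product kernel; writing $\calf=\{[A_1],[A_2]\}$ (with $A_2$ cyclic in the case $\calf=\{[A]\}$, $F_N=A\ast\ZZ$) one has $F_N=A_1\ast A_2$, and \Cref{Rmk:rankfreefactorsystem} supplies the bounds $\rank(A_1),\rank(A_2)\leq N-1$, which is conclusion~$(2)$. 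If $\calf$ is nonsporadic, \Cref{Lem:centraliserfixespointboundary} shows that $\phi$ is fully irreducible relative to $\calf$ and that $C(\phi)$ virtually fixes a point of $\partial_\infty\FF(F_N,\calf)$; running this through \Cref{prop fix arational boundary} and then \Cref{Prop existence splitting arational tree}~$(1)$--$(2)$ (with $H=C(\phi)$) yields the homomorphism $C(\phi)\to\ZZ\times\prod_{[A]\in\calf}\IA(A,3)$ with abelian kernel, onto whose $\ZZ$-factor $\phi$ maps nontrivially, i.e.\ conclusion~$(3)$, taking the $A_i$ to be the members of $\calf$ and $B$ a free complement.

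Because all of the substantive work has been pushed into \Cref{Thm:sesDehntwist}, \Cref{Thm:sesoneendedcase}, \Cref{thm:shortexactsequencecentraliser} and \Cref{Prop existence splitting arational tree}, what remains is essentially bookkeeping: matching up ranks, the precise targets of the homomorphisms, and the various ``maybe trivial or cyclic'' caveats with the outputs of the cited theorems. I expect the one genuinely delicate point to be in the nonsporadic case, namely checking that the free-product decomposition $F_N=A_1\ast\cdots\ast A_k\ast B$ can be arranged with \emph{every} factor, and in particular $B$, nontrivial. This is exactly where it matters that $\calf$ is chosen maximal only among $C(\phi)$-invariant free factor systems rather than among all of them: by \Cref{Theo:freefactorconjclassfixed} a $C(\phi)$-invariant system is fixed elementwise, which constrains how large it can grow, and the argument will have to extract the required nontrivial complement $B$ from a closer inspection of the free splitting produced by \Cref{Prop existence splitting arational tree} while preserving abelianity of the kernel and the relative full irreducibility of $\phi$.
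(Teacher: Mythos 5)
Your proof is essentially identical to the paper's: the same four-way case split (Dehn twist; one-ended relative to $\cala$ but not a Dehn twist; $\calf$ sporadic; $\calf$ nonsporadic) and the same citations in each case, namely \Cref{Thm:sesDehntwist}, \Cref{Thm:sesoneendedcase}, \Cref{thm:shortexactsequencecentraliser}~(2), and \Cref{Lem:centraliserfixespointboundary} fed through \Cref{prop fix arational boundary} and \Cref{Prop existence splitting arational tree}~(1)--(2). The nontriviality of the complement $B$ in case~(3) that you flag at the end is also not addressed by the paper's own proof, whose argument for case~(3) is exactly the citation chain you give with no further inspection of $B$, so this is at worst a small imprecision in the way the theorem is stated rather than a gap in your argument relative to the paper's.
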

\begin{proof}
    To make this explicit suppose first that $\phi$ is a Dehn twist. Then we are in case $(1)$ and the short exact sequence follows from \Cref{Thm:sesDehntwist}. 
    
    Suppose now that $\phi$ is not a Dehn twist. Let $\cala=\{[\Fix(\Phi)]\}_{\Phi \in \phi}$. Suppose that $F_N$ is one-ended relative to $\cala$. Then we can apply \Cref{Thm:sesoneendedcase} to obtain case $(4)$. 
    
    Suppose that $F_N$ is not one-ended relative to $\cala$ and let $\calf \geq \calf_\cala$ be a maximal $C(\phi)$-invariant free factor system. If $\calf$ is sporadic, we can apply \Cref{thm:shortexactsequencecentraliser}~$(2)$ to get case $(2)$.
    
    Suppose that $\calf$ is nonsporadic. By Lemma~\ref{Lem:centraliserfixespointboundary}, the group $C(\phi)$ virtually fixes a point in $\partial_{\infty}\FF(F_N,\calf)$ and $\phi$ is fully irreducible relative to $\calf$. By~\Cref{prop fix arational boundary}, the group $C(\phi)$ virtually fixes the homothety class of an $(F_N,\calf)$-arational tree. Thus, we can apply \Cref{Prop existence splitting arational tree}~$(1)$ to get the homomorphism of case $(3)$. Note that a root of $\phi$ projects onto the $\ZZ$ factor by \Cref{Prop existence splitting arational tree}~$(2)$ since $\phi$ is fully irreducible relative to $\calf$.
\end{proof}

\subsection{Weyl groups}
We now adapt Theorems~\ref{Thm:sesoneendedcase} and~\ref{thm:shortexactsequencecentraliser} to the Weyl group $W(\phi)=C(\phi)/\langle \phi \rangle$ in $\IA_N(3)$ of an infinite order element $\phi \in \IA_N(3)$. Let $\calf$ be a free factor system as in \Cref{thm:shortexactsequencecentraliser}. For every $[A]\in \calf$, since $\IA_N(3)$ is torsion free, the image $\phi|_A$ of $\phi$ in $\IA(A,3)$ is either trivial or infinite. Let $\calf_\infty$ be the subset of $\calf$ consisting of all $[A]\in \calf$ such that $\phi|_A$ is infinite and let $\calf_T=\calf- \calf_\infty$. Let $H_T=\prod_{[A] \in \calf_T} \IA(A,3)$ and let $H_\infty=\prod_{[A] \in \calf_\infty} C(\phi|_A)$. We denote by $\rho_\infty \colon C(\phi) \to H_\infty$ and $\rho_T \colon C(\phi) \to H_T$ the homomorphisms given by  \Cref{thm:shortexactsequencecentraliser}~$(2)$. 

\begin{corollary}\label{Coro:sesWeylgroup}
    Let $N \geq 2$, let $\phi \in \IA_N(3)$ be a root-closed element of infinite order and let $\cala=\{[\Fix(\Phi)]\}_{\Phi \in \phi}$. Let $\calf \geq \calf_\cala$ be a (possibly trivial) maximal $C(\phi)$-invariant free factor system. The Weyl group $W(\phi)=C(\phi)/\langle \phi \rangle$ of $\phi$ in $\IA_N(3)$ satisfies one of the following.

    \begin{enumerate}

        \item The element $\phi$ is a Dehn twist.
        \item The group $F_N$ is one-ended relative to $\cala$. The group $W(\phi)$ is isomorphic to a subgroup of  $\Out(A) \times \Out(B)$, where $A,B \subseteq F_N$ are such that $\rank(A)+\rank(B)=N+1$ and $\rank(A),\rank(B) \leq N-1$.
        \item The group $F_N$ is one-ended relative to $\cala$. The group $W(\phi)$ is isomorphic to a subgroup $\Out(A \ast \langle tst^{-1}\rangle,[s],[tst^{-1}])$, where $A \subseteq F_N$, $\rank(A)=N-1$, $s \in A$ and $t$ is a basis element of $F_N$.
        \item The free factor system $\calf$ is nonsporadic. The group $W(\phi)$ is isomorphic to a subgroup of $\Aut(A) \times \Aut(B)$, where $A,B \subseteq F_N$ are nontrivial subgroups such that $F_N=A \ast B$.
        \item The free factor system $\calf$ is sporadic and $\rho_\infty(\phi)$ is infinite. Then $W(\phi)$ fits into an exact sequence 
        \[1 \to K \to W(\phi) \to H_\infty/\rho_\infty(\langle \phi \rangle) \times H_T,\]
        where $K$ is a finite index subgroup of a direct product of two finitely generated free groups.
        \item The free factor system $\calf$ is sporadic and $\rho_\infty(\phi)$ is trivial. Then $W(\phi)$ fits into an exact sequence 
        \[1 \to K \to W(\phi) \to \prod_{[A]\in \calf} \IA(A,3),\] 
        where $K$ is a direct product of a finitely generated free (maybe cyclic or trivial) group and a finite group.
    \end{enumerate}
\end{corollary}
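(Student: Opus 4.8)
The plan is to push the structural results \Cref{Thm:sesoneendedcase} and \Cref{thm:shortexactsequencecentraliser} through the quotient map $C(\phi)\to W(\phi)=C(\phi)/\langle\phi\rangle$. Two facts about $\phi$ are used throughout: $\phi$ lies in the centre of $C(\phi)$, so $\langle\phi\rangle$ is normal and $W(\phi)$ is a group; and $\phi$ is root-closed of infinite order, so $\langle\phi\rangle\cong\ZZ$ and $\phi$ is not a proper power inside any subgroup of $C(\phi)$ containing it. I would begin by recording the relevant case: if $\phi$ is a Dehn twist we are in conclusion~(1) and there is nothing to prove; otherwise, setting $\cala=\{[\Fix(\Phi)]\}_{\Phi\in\phi}$, I distinguish whether $F_N$ is one-ended relative to $\cala$ and, if not, whether the maximal $C(\phi)$-invariant free factor system $\calf\geq\calf_\cala$ is sporadic.

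Assume first that $\phi$ is not a Dehn twist and $F_N$ is one-ended relative to $\cala$. \Cref{Thm:sesoneendedcase} provides a homomorphism $p\colon C(\phi)\to\ZZ$ whose image is generated by a root of $\phi$; since $\phi$ is not a proper power, that image is $\langle p(\phi)\rangle$, so $p$ restricts to an isomorphism of $\langle\phi\rangle$ onto $\ZZ$. As $\phi$ is central, $p$ splits with central image, hence $C(\phi)=\ker p\times\langle\phi\rangle$ and $W(\phi)\cong\ker p$; the two alternatives for $\ker p$ in \Cref{Thm:sesoneendedcase}~(2) are exactly conclusions~(2) and~(3). The identical argument, applied to the surjection $C(\phi)\to\ZZ$ of \Cref{thm:shortexactsequencecentraliser}~(1) when $\calf$ is nonsporadic, gives $W(\phi)\cong\ker$, a subgroup of $\Aut(A)\times\Aut(B)$ with $F_N=A\ast B$, which is conclusion~(4).

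There remains the case $\calf=\{[A_1],[A_2]\}$ sporadic. By \Cref{thm:shortexactsequencecentraliser}~(2) there is a homomorphism $h\colon C(\phi)\to\IA(A_1,3)\times\IA(A_2,3)$ whose kernel $K_0$ is a finite-index subgroup of a product $D_1\times D_2$ of two finitely generated free groups. Since every $\psi\in C(\phi)$ fixes each $[A_i]$ by \Cref{Theo:freefactorconjclassfixed} and $\psi|_{A_i}$ commutes with $\phi|_{A_i}$, the map $h$ factors through $\rho_\infty\times\rho_T\colon C(\phi)\to H_\infty\times H_T$, with $\rho_\infty(\phi)$ central in $H_\infty$ and $\rho_T(\phi)=1$. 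If $\rho_\infty(\phi)$ has infinite order, then passing to $H_\infty/\rho_\infty(\langle\phi\rangle)$ kills $\phi$, so $h$ descends to $W(\phi)\to\big(H_\infty/\rho_\infty(\langle\phi\rangle)\big)\times H_T$; a short diagram chase identifies its kernel with the image of $K_0$ in $W(\phi)$, and since $\calf_\infty\neq\varnothing$ and $\IA(A,3)$ is torsion free (\Cref{Prop torsion free}) one gets $\langle\phi\rangle\cap K_0=1$, so that kernel is isomorphic to $K_0$ --- this is conclusion~(5). If instead $\rho_\infty(\phi)=1$, that is $\calf_\infty=\varnothing$, then $\phi\in K_0$ and $h$ already descends to $W(\phi)\to\prod_{[A]\in\calf}\IA(A,3)$ with kernel $K_0/\langle\phi\rangle$.

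The main obstacle is conclusion~(6): showing $K_0/\langle\phi\rangle$ is the direct product of a finitely generated free group and a finite group. Here I would exploit that $\phi$ is central of infinite order in the finite-index subgroup $K_0\leq D_1\times D_2$. Writing $\phi=(\phi_1,\phi_2)$, each $\phi_i$ centralises the finite-index subgroup $\pi_i(K_0)\leq D_i$; so whenever $\phi_i\neq1$ the subgroup $\pi_i(K_0)$ is contained in the centraliser of $\phi_i$ in the free group $D_i$, which is infinite cyclic, forcing $D_i\cong\ZZ$. If both $\phi_i\neq1$ then $K_0$ is finite-index in $\ZZ^2$, hence $\cong\ZZ^2$, and root-closedness makes $\phi$ primitive, so $K_0/\langle\phi\rangle\cong\ZZ$. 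Otherwise, say $\phi_2=1$; then $N:=K_0\cap(D_1\times\{1\})$ is an infinite cyclic subgroup of $K_0$, central because $D_1\cong\ZZ$ is abelian, it contains $\langle\phi\rangle$ with finite index, and $K_0/N\cong\pi_2(K_0)$ is finitely generated free. Hence $K_0/\langle\phi\rangle$ is a central extension of a finitely generated free group by the finite cyclic group $N/\langle\phi\rangle$, and such an extension splits since free groups have cohomological dimension one; therefore $K_0/\langle\phi\rangle\cong\pi_2(K_0)\times(N/\langle\phi\rangle)$, of the required form. Assembling the cases completes the proof.
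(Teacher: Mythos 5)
Your proposal is correct and follows essentially the same route as the paper's proof: in cases (1)--(4) you split off the central $\langle\phi\rangle$ using root-closedness once the image of the map to $\ZZ$ is generated by $\phi$, and in the sporadic case you push $(\rho_\infty,\rho_T)$ down to $W(\phi)$ and separate the subcases $\rho_\infty(\phi)$ infinite or trivial, exactly as in the paper. Your treatment of case (6) merely spells out in more detail the paper's brief observation that the centraliser of an infinite-order element of a product of two finitely generated free groups lies in a group of the form $F\times\ZZ$, so that the quotient by $\langle\phi\rangle$ is a direct product of a finitely generated free group and a finite group.
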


\begin{proof}

    We will frequently use the fact that, if $C(\phi)$ maps onto $\ZZ$ with image generated by $\phi$, then $C(\phi)$ splits as $C(\phi)=K \times \langle \phi \rangle$, where $K$ is the kernel of this homomorphism. 
    
    Suppose that $\phi$ is not a Dehn twist. We begin with the case that $F_N$ is one-ended relative to $\cala$. By Theorem~\ref{Thm:sesoneendedcase}~(2), we have a homomorphism $C(\phi) \to \ZZ$ whose image is generated by a root of $\phi$. Since $\phi$ is root-closed, the image is generated by $\phi$. Thus, the Weyl group $W(\phi)$ is isomorphic to the kernel $K$ of this homomorphism. By Theorem~\ref{Thm:sesoneendedcase}~(2)(a) or (b), the group $W(\phi)$ satisfies either Case~$(2)$ or $(3)$ respectively.

    We now deal with the case that $F_N$ is not one-ended relative to $\cala$. Consider the maximal $C(\phi)$-invariant free factor system $\calf \geq \calf_\cala$. 
    
    Suppose first that $\calf$ is nonsporadic. \Cref{thm:shortexactsequencecentraliser}~$(1)$ gives a homomorphism whose image is generated by a root of $\phi$. As in the proof of Cases~$(2)$ and $(3)$, the group $W(\phi)$ is isomorphic to the kernel of this homomorphism, which leads Case~$(4)$.

    Suppose now that $\calf$ is sporadic. \Cref{thm:shortexactsequencecentraliser}~$(2)$ gives a homomorphism $C(\phi) \xrightarrow{(\rho_\infty,\rho_T)} H_\infty \times H_T$. It induces a quotient map 
    \[C(\phi) \to (H_\infty \times H_T)/_{(\rho_\infty,\rho_T)(\langle \phi \rangle)}.\] 
    Since $\rho_T(\phi)$ is trivial, this gives a map 
    \[C(\phi) \to H_\infty/\rho_\infty(\langle \phi \rangle) \times H_T\] 
    which induces a quotient map 
    \[W(\phi) \xrightarrow{\rho_W} H_\infty/\rho_\infty(\langle \phi \rangle) \times H_T.\]
    
    When $\rho_\infty (\phi)$ is infinite, the kernel of $\rho_W$ is exactly the same as the homomorphism $(\rho_\infty,\rho_T)$, so Case~$(5)$ follows. 
    
   Suppose that $\rho_\infty (\phi)$ is trivial. Then $\phi$ is contained in the kernel of $(\rho_\infty,\rho_T)$ and the kernel of $\rho_W$ is $\ker((\rho_\infty,\rho_T))/\langle \phi \rangle$. Moreover, $H_\infty$ is the trivial group.
   
    By  \Cref{thm:shortexactsequencecentraliser}~$(2)$, the kernel $\ker((\rho_\infty,\rho_T))$ is isomorphic to a direct product of two finitely generated free groups. Note that that the centraliser of an infinite element in a direct product of two finitely generated free groups is contained in a group isomorphic to $F\times \ZZ$, where $F$ is a finitely generated free group. Thus, the quotient $\ker((\rho_\infty,\rho_T))/\langle \phi \rangle$ is isomorphic to a direct product of a finitely generated free group and a finite group. This shows Case~$(6)$.
\end{proof}

\section{\texorpdfstring{Centralisers of elements in $\mathrm{Out}(F_3)$}{Centralisers of elements in Out(F3)}} \label{sec:OutF3}
 
In this section, we focus on the study of outer automorphisms of $F_3$. We prove the following. 

\begin{thm}\label{Theo type VF}
Let $\phi \in \IA_3(3)$. The centraliser $C(\phi)$ of $\phi$ in $\IA_3(3)$ is of type $\mathsf{VF}$. Moreover, one of the following holds.
\begin{enumerate}
\item The centraliser of $\phi$ is abelian.
\item The centraliser of $\phi$ is isomorphic to $F \times \ZZ$ where $F$ is a finitely generated free group.
\item The centraliser of $\phi$ is isomorphic to a direct product $H\times \ZZ$ where $H$ is a finite index subgroup of a direct product of two finitely generated free groups.
\item The outer automorphism $\phi$ is a Dehn twist. There exist a JSJ tree $T$ preserved by $C(\phi)$ and a short exact sequence 
    \[1 \to K \to C(\phi) \to \prod_{v\in V(F_3 \backslash T)} \IA_v(3) \to 1, \] 
    where $K$ is a free abelian group whose dimension is equal to $|E(F_3 \backslash T)|$ and, for every $v\in V(F_3 \backslash T)$, the group $\IA_v(3)$ is a finite index subgroup of the group $\Out(G_v,\mathrm{Inc}_v)$. Moreover, $\phi$ is contained in $K$.
\end{enumerate}
\end{thm}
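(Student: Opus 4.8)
The plan is to feed $N=3$ into the four-way classification of \Cref{thm:classificationcentralisers} and exploit that every free factor of $F_3$ has rank at most two, so that the ``building blocks'' in the structural short exact sequences collapse. If $\phi$ has finite order then $\phi=1$ by \Cref{Prop torsion free}, so $C(\phi)=\IA_3(3)$, which acts freely and cocompactly on Culler--Vogtmann's spine of Outer space \cite{CV86} and is therefore of type $\mathsf{F}$; so I may and do assume $\phi$ has infinite order. The ingredients I will lean on are: $C(\phi)$ is finitely generated by Francaviglia--Martino--Syrigos~\cite{francaviglia2021action}; $C(\phi)\leq\IA_3(3)$ is torsion free; every virtually abelian subgroup of $\IA_3(3)$ is abelian (\Cref{Theo virtually abelian is abelian}) and every subgroup with no nonabelian free subgroup is abelian (\Cref{Lem Tits}); in $\IA_3(3)$, commuting is detected by commuting of powers (\Cref{Prop:powercommute}); $\IA_1(3)$ is trivial while $\IA_2(3)$ is a finitely generated free group, being a torsion-free finite-index subgroup of $\Out(F_2)\cong\GL_2(\ZZ)$; and that the class of groups of type $\mathsf{VF}$ is closed under extensions and under commensurability.

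The Dehn twist case of \Cref{thm:classificationcentralisers} is exactly item~$(4)$ of the statement, and there $C(\phi)$ is of type $\mathsf{VF}$ by Rodenhausen--Wade~\cite{rodenhausen2015centralisers} (alternatively: $K\cong\ZZ^{|E(F_3\backslash T)|}$ is of type $\mathsf F$ and each $\IA_v(3)$ is a finite-index subgroup of a McCool group of a free group of rank $\leq 2$, so type $\mathsf{VF}$ follows from the extension property). In case~$(3)$, where $F_3=A_1\ast\cdots\ast A_k\ast B$ with all factors nontrivial and $\calf=\{[A_1],\dots,[A_k]\}$ a nonsporadic maximal $C(\phi)$-invariant free factor system, a rank count forces each $A_i$ to have rank one, so $\prod_i\IA(A_i,3)$ is trivial, $C(\phi)$ maps onto $\ZZ$ with abelian kernel, hence $C(\phi)$ is polycyclic and so abelian by \Cref{Lem Tits}: item~$(1)$, of type $\mathsf F$.

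For cases~$(2)$ and~$(4)$ the key point is that all non-abelian ``pieces'' appearing in the exact sequences of \Cref{Thm:sesoneendedcase} and \Cref{thm:shortexactsequencecentraliser} are built from free groups of rank $\leq 2$ --- finite-index subgroups of $\Out(F_2)$, McCool groups $\Out(F_2,[s])$ and their ascending-HNN analogues $\Out(A\ast\langle tst^{-1}\rangle,[s],[tst^{-1}])$ with $\rank A=2$, and centralisers in $\IA_2(3)$ of infinite-order elements --- each of which is trivial, virtually cyclic, or virtually free; and that for case~$(2)$, $F_3=A_1\ast A_2$ forces one of the two free factors to have rank one, hence trivial $\IA$-factor. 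I would then trace these exact sequences, noting that $\phi$ is central in $C(\phi)$ and that, by \Cref{Prop:powercommute}, anything whose power centralises $\phi$ already centralises it, so the roots of $\phi$ supplied by \Cref{Thm:sesoneendedcase}(2) and \Cref{thm:shortexactsequencecentraliser}(1) lie in the centre of $C(\phi)$; combined with \Cref{Theo virtually abelian is abelian} --- which forbids $C(\phi)$ from containing a non-abelian virtually abelian subgroup, hence forces any conjugation action on an abelian kernel coming from a rank-$\leq 2$ piece to be trivial --- the a priori non-split extensions become central, indeed direct products. The outcome is that $C(\phi)$ is abelian, or isomorphic to $F\times\ZZ$ with $F$ a finitely generated free group, or isomorphic to $H\times\ZZ$ with $H$ a finite-index subgroup of a product of two finitely generated free groups --- items~$(1)$, $(2)$, $(3)$ --- and in each case it is of type $\mathsf{VF}$, a product of two finitely generated free groups being of type $\mathsf F$.

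The main obstacle is the bookkeeping just sketched for cases~$(2)$ and~$(4)$: since the homomorphisms supplied by \Cref{thm:classificationcentralisers} need not be surjective, one cannot read off the structure of $C(\phi)$ from a quotient, and must instead (i)~enumerate the free factor systems and JSJ configurations of $F_3$ that genuinely occur, (ii)~identify the exact isomorphism type of the abelian or product-of-two-free kernel and of the image inside a concrete small subgroup of $\Out(F_2)$, and (iii)~check that the resulting extension splits as a direct product with the relevant central cyclic subgroup. Step~(iii) is where \Cref{Theo virtually abelian is abelian} and \Cref{Prop:powercommute} do the real work, and step~(ii) is the ``careful and explicit analysis of stabilisers of free factor systems of $F_3$'' that forms the laborious heart of the argument. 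Once (i)--(iii) are settled, both the four-item list and the type $\mathsf{VF}$ conclusion drop out.
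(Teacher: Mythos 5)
Your overall strategy (specialise the rank-$3$ situation, use torsion-freeness, \Cref{Lem Tits}, \Cref{Theo virtually abelian is abelian} and \Cref{Prop:powercommute} to upgrade the structural homomorphisms to direct products) is in the right spirit, and your treatment of the Dehn twist case and of the nonsporadic case (abelian-by-cyclic, hence abelian by \Cref{Lem Tits}; ``polycyclic'' should be ``metabelian'', but the argument survives) matches the paper. The genuine gap is in the branch you route through case~$(4)$ of \Cref{thm:classificationcentralisers}, i.e.\ through \Cref{Thm:sesoneendedcase}. Your key assertion there --- that for $N=3$ all the non-abelian pieces are ``trivial, virtually cyclic, or virtually free'' --- is false for exactly the two pieces that occur: in \Cref{Thm:sesoneendedcase}~$(2)(a)$ the kernel $K$ only embeds in $\Out(A)\times\Out(B)$ with $\rank(A)=\rank(B)=2$, which is a product of \emph{two} virtually free groups (so only virtually $F\times F$), and in $(2)(b)$ it embeds in $\Out(A\ast\langle tst^{-1}\rangle,[s],[tst^{-1}])$ with $\rank(A)=2$, a McCool group of a rank-$3$ free group, which is not virtually free (it contains $\ZZ^2$; compare the bound $\underline{\gd}\leq 2N-4=2$ from \Cref{coro:boundvcdMccool}). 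Knowing only that $K$ is a torsion-free subgroup of such a group, even after making the root of $\phi$ central via \Cref{Prop:powercommute} and writing $C(\phi)\cong K\times\ZZ$, does not place $C(\phi)$ in items~$(1)$--$(3)$: arbitrary subgroups of a product of two free groups need not be free, abelian, or of finite index in a product of two free groups (Stallings--Bieri-type fibre products are the standard obstruction), and this is also where ``type $\mathsf{VF}$'' could fail. So the one-ended, non-Dehn-twist case is not closed by your argument.

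The paper avoids this entirely: outside the Dehn twist case it never invokes the one-ended machinery of \Cref{Thm:sesoneendedcase}. Instead it takes a maximal $C(\phi)$-invariant free factor system $\calf$ (which exists whether or not $F_3$ is one-ended relative to $\cala$) and argues by the sporadic/nonsporadic dichotomy alone: if $\calf$ is nonsporadic all its factors are cyclic and $C(\phi)$ is abelian; if $\calf$ is sporadic, Levitt's description of the stabiliser of the corresponding free splitting ($\Aut(A)\times\Aut(B)$ or $A\rtimes\Aut(A)$), the injection of $C(\phi)$ into $\Aut(A)$ respectively the identification of the kernel with twist groups coming from fixed subgroups, \Cref{Lem:virtuallyfreeimpliesfree}, and the Bestvina--Handel rank bound on $\Fix(\Phi)$ pin the kernel down \emph{exactly} as ($\IA_3(3)\cap{}$) a free group of rank at most $2$ or a product of two such, after which the centrality argument you describe produces the direct product. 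To repair your proposal you should replace the appeal to \Cref{Thm:sesoneendedcase} by this direct analysis of the sporadic stabilisers; as written, the case~$(4)$ branch does not go through.
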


We highlight the following immediate corollary.

\begin{corollary}\label{cor type VF}
    Let $\phi\in\IA_3(3)$.  The centraliser $C_{\Out(F_3)}(\phi)$ is of type $\mathsf{VF}$.
\end{corollary}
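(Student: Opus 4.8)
The plan is to deduce \Cref{cor type VF} from \Cref{Theo type VF} together with a short commensurability argument. First I would recall that $\IA_3(3)$ is a finite index subgroup of $\Out(F_3)$, so for $\phi \in \IA_3(3)$ the centraliser $C_{\IA_3(3)}(\phi)$ is a finite index subgroup of $C_{\Out(F_3)}(\phi)$; indeed $C_{\IA_3(3)}(\phi) = C_{\Out(F_3)}(\phi) \cap \IA_3(3)$, and intersecting a finite index subgroup with any subgroup yields a finite index subgroup of that subgroup. By \Cref{Theo type VF}, $C_{\IA_3(3)}(\phi)$ is of type $\mathsf{VF}$, meaning it has a finite index subgroup $G_0$ admitting a finite model for $BG_0$ (equivalently, $G_0$ is of type $\mathsf{F}$ and has finite cohomological dimension, witnessed by a finite $K(G_0,1)$).

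The key step is then the standard fact that being of type $\mathsf{VF}$ is a commensurability invariant: if $H$ is of type $\mathsf{VF}$ and $H$ has finite index in a group $G$, then $G$ is of type $\mathsf{VF}$. To see this, let $G_0 \leq H$ be a finite index subgroup of type $\mathsf{F}$; since $[H:G_0] < \infty$ and $[G:H] < \infty$ we get $[G:G_0] < \infty$, so $G_0$ is a finite index subgroup of $G$ which is of type $\mathsf{F}$, hence $G$ is of type $\mathsf{VF}$ by definition. Applying this with $H = C_{\IA_3(3)}(\phi)$ and $G = C_{\Out(F_3)}(\phi)$ gives the result.

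I do not anticipate a genuine obstacle here; this is the "immediate corollary" flagged in the text, and the only thing to be careful about is making explicit that $C_{\IA_3(3)}(\phi)$ has finite index in $C_{\Out(F_3)}(\phi)$ (rather than, say, merely being contained in it) and that the type $\mathsf{VF}$ condition passes up along finite index overgroups. Concretely the proof reads as follows.

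\begin{proof}
    Since $\IA_3(3)$ has finite index in $\Out(F_3)$, the subgroup $C_{\IA_3(3)}(\phi) = C_{\Out(F_3)}(\phi) \cap \IA_3(3)$ has finite index in $C_{\Out(F_3)}(\phi)$. By \Cref{Theo type VF}, $C_{\IA_3(3)}(\phi)$ is of type $\mathsf{VF}$, so it admits a finite index subgroup $G_0$ of type $\mathsf{F}$. Then $G_0$ has finite index in $C_{\Out(F_3)}(\phi)$ as well, which shows that $C_{\Out(F_3)}(\phi)$ is of type $\mathsf{VF}$.
\end{proof}
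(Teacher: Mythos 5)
Your proof is correct and is exactly the argument the paper intends: the corollary is stated as immediate from \Cref{Theo type VF}, via the observation that $C_{\IA_3(3)}(\phi)=C_{\Out(F_3)}(\phi)\cap\IA_3(3)$ has finite index in $C_{\Out(F_3)}(\phi)$ and that type $\mathsf{VF}$ passes to finite index overgroups. No issues.
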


The proof of  \Cref{Theo type VF} is decomposed into several propositions. The idea is to consider a maximal $C(\phi)$-invariant free factor system $\mathcal{F}$ and to treat separately the cases when $\mathcal{F}$ is sporadic or not. Observe that, since we are considering a nonabelian free group of rank $3$, the free factor system $\mathcal{F}$ is sporadic if and only if it contains the conjugacy class of a nonabelian free factor.

\begin{lemma}\label{Lem:typevfnonsporadic}
Let $\phi \in \IA_3(3)$. Let $\mathcal{F}$ be a  maximal $C(\phi)$-invariant free factor system. Suppose that $\mathcal{F}$ is nonsporadic. If $\phi$ is not a Dehn twist then $C(\phi)$ is abelian.
\end{lemma}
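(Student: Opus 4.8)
The plan is to invoke the structural results already assembled in this section. Since $\calf$ is a maximal $C(\phi)$-invariant free factor system which is assumed nonsporadic, Lemma~\ref{Lem:centraliserfixespointboundary} tells us two things: that $\phi$ is fully irreducible relative to $\calf$, and that $C(\phi)$ virtually fixes a point of $\partial_\infty \FF(F_N,\calf)$. By Proposition~\ref{prop fix arational boundary} this upgrades to $C(\phi)$ virtually fixing the homothety class of an $(F_N,\calf)$-arational tree, so Proposition~\ref{Prop existence splitting arational tree} applies with $H=C(\phi)$.

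From Proposition~\ref{Prop existence splitting arational tree}~$(1)$ we obtain a homomorphism $C(\phi)\to \ZZ\times\prod_{[A]\in\calf}\IA(A,3)$ with abelian kernel $K'$ consisting of Dehn twists. The key point I would stress is that $\phi$ must project nontrivially onto the $\ZZ$ factor: by parts $(2)$ and $(4)$ the kernel of the projection $p\colon C(\phi)\to\ZZ$ fixes elementwise a nonperipheral subgroup $C\subseteq F_N$, whereas if $\phi$ lay in $\ker p$ then $\Fix(\Phi)$ would contain a nonperipheral subgroup for a representative $\Phi$, contradicting that $\calf\geq\calf_\cala$ is strictly above the free factor system generated by $\cala=\{[\Fix(\Phi)]\}_{\Phi\in\phi}$ (this is exactly the argument already run inside the proof of Lemma~\ref{Lem:centraliserfixespointboundary}). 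So $p(\phi)$ generates a finite-index subgroup of the image, and since the image is cyclic and $\phi$ itself need not be root-closed here, I would instead argue directly: $\phi\in\ker p$ is impossible, hence $p(\phi)\neq 0$.

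Now I claim $C(\phi)$ is abelian. Suppose not. By Lemma~\ref{Lem Tits}, a nonabelian subgroup of $\IA_N(3)$ contains a nonabelian free subgroup $F$. Since $p(\phi)\neq 0$, after passing to a power we may assume $\phi\notin\ker p$, and in any case $\ker p=K$ is, by Proposition~\ref{Prop existence splitting arational tree}~$(3)$, isomorphic to a subgroup of $\Aut(A)\times\Aut(B)$ with $F_N=A\ast B$; moreover by part $(4)$ all of $K$ fixes a common nonperipheral $C$ elementwise. The plan is to derive a contradiction from the assumption that $\phi$ is not a Dehn twist together with the nonabelianness of $C(\phi)$: one shows that any element $\psi\in C(\phi)$ not lying in $\ker p$ is, like $\phi$, fully irreducible relative to $\calf$ (it shares the fixed pair $\{T_+,T_-\}$ of $\phi$ on $\partial_\infty\FF(F_N,\calf)$ and its image in $\ZZ$ is a root of a fully irreducible, hence is itself fully irreducible by the argument of Lemma~\ref{Lem:centraliserfixespointboundary}), and then the subgroup generated by two such elements together with $K$ cannot contain a nonabelian free group unless some element of $K$ is a genuine obstruction — which, since $K$ is abelian and $C(\phi)/K\cong\ZZ$, forces $C(\phi)$ to be (virtually) $\ZZ$-by-abelian, hence virtually abelian, hence abelian by Theorem~\ref{Theo virtually abelian is abelian}. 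The only way $\phi$ fails to be a Dehn twist while $C(\phi)$ is nonabelian is then excluded.

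The hard part will be pinning down precisely why non-Dehn-twist-ness of $\phi$ is used: the abelianness of $K'$ and the $\ZZ$-quotient alone give virtually polycyclic-looking structure, but one must rule out $K$ itself being nonabelian, and here is where I would lean on the fact that the image of $\phi$ being fully irreducible relative to $\calf$ means the splitting produced by Proposition~\ref{Prop existence splitting arational tree} is the ``whole'' structure — there is no further reduction, so $K$ acts, via the arational-tree machinery, with a very constrained structure (fixing $C$ elementwise and sitting in $\Aut(A)\times\Aut(B)$), and combined with $p(\phi)\neq 0$ this collapses $C(\phi)$ to something virtually abelian. Applying Theorem~\ref{Theo virtually abelian is abelian} finishes the argument: $C(\phi)$ is abelian.
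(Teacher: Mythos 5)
Your argument never uses $N=3$, and that is a fatal gap: the statement is genuinely special to rank $3$. For larger $N$ the analogous claim is false (compare case~$(3)$ of \Cref{thm:classificationcentralisers}: $C(\phi)$ maps to $\ZZ\times\prod_i\IA(A_i,3)$ with abelian kernel, and such centralisers can be far from abelian, e.g.\ when $\phi$ restricts trivially to a large free factor). The paper's proof hinges on the observation, made at the start of \Cref{sec:OutF3}, that in rank $3$ a \emph{nonsporadic} free factor system consists only of conjugacy classes of cyclic subgroups; hence every $\Out(A)$ with $[A]\in\calf$ is finite ($\cong\ZZ/2\ZZ$), so the image of the homomorphism $C(\phi)\to\ZZ\times\prod_{[A]\in\calf}\Out(A)$ from \Cref{Prop existence splitting arational tree} is virtually cyclic; together with the abelian kernel this shows $C(\phi)$ contains no nonabelian free subgroup, and \Cref{Lem Tits} finishes. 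Your proposal replaces this step with the unproven assertion that ``$K$ is abelian and $C(\phi)/K\cong\ZZ$'': but $K=\ker(p\colon C(\phi)\to\ZZ)$ is only known (by \Cref{Prop existence splitting arational tree}~$(3)$) to embed in $\Aut(A)\times\Aut(B)$, which is not abelian; it is the kernel of the \emph{full} map to $\ZZ\times\prod_{[A]\in\calf}\Out(A)$ that is abelian, and the quotient by that kernel is a subgroup of $\ZZ\times\prod\Out(A)$, not of $\ZZ$. Controlling that quotient is exactly where the rank-$3$ hypothesis is needed, and your sketch supplies no substitute.

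A secondary problem is the opening appeal to \Cref{Lem:centraliserfixespointboundary}: that lemma assumes $\calf_\cala\leq\calf$, i.e.\ it lives in the infinitely-ended setting where $\calf_\cala$ is defined and the chosen maximal system lies above it. The present lemma makes no such assumption (it is applied in the proof of \Cref{Theo type VF} to an arbitrary maximal $C(\phi)$-invariant $\calf$), so both the full irreducibility of $\phi$ relative to $\calf$ and your argument that $p(\phi)\neq 0$ (``$\Fix(\Phi)$ would be nonperipheral, contradicting $\calf\geq\calf_\cala$'') rest on a hypothesis you do not have. The paper instead derives full irreducibility directly: the kernel of $\langle\phi\rangle\to\ZZ\times\prod\Out(A)$ consists of Dehn twists, each $\Out(A)$ is finite (again the rank-$3$ point), and $\phi$ is an infinite-order non--Dehn twist, so its image in the $\ZZ$ factor is nontrivial and \Cref{Lem stratching factor cyclic}/\Cref{Prop existence splitting arational tree}~$(2)$ force $\phi$ to be fully irreducible relative to $\calf$. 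As written, your proof would establish the (false) statement for all $N$, so it cannot be repaired without inserting the rank-$3$ analysis.
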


\begin{proof}
We claim that $\phi$ is fully irreducible relative to $\calf$. Indeed, by maximality of $\calf$ and \Cref{Theo loxo free factor}, the group $C(\phi)$ contains a loxodromic element $\psi$ of $\FF(F_N,\calf)$. As $\psi$ commutes with $\phi$, the element $\phi$ must fix the attracting point of $\psi$ in $\partial_{\infty}\FF(F_N,\calf)$. By \Cref{prop fix arational boundary}, $\phi$ must virtually fix the homothety class of an  $(F_N,\calf)$-arational tree. By \Cref{Prop existence splitting arational tree}~$(1)$, the group $\phi$ fits in an exact sequence 
\[1 \to K'\to \langle \phi \rangle \to \ZZ \times \prod_{[A] \in \calf} \Out(A),\] where $K'$ consists of Dehn twists and the projection on the $\ZZ$ factor is nontrivial if and only if $\phi$ is fully irreducible relative to $\calf$.

For every $[A]\in \calf$, as $A$ is cyclic, the group $\Out(A)$ is finite. Thus, since $\phi$ is not a Dehn twist and has infinite order, the image of $\phi$ in the $\ZZ$ factor is nontrivial and $\phi$ is fully irreducible relative to $\calf$.

Thus, by \Cref{Theo loxo free factor}, $\phi$ is a loxodromic element of $\FF(F_N,\calf)$. Therefore, $C(\phi)$ fixes the attracting point of $\phi$ in $\partial_{\infty}\FF(F_N,\calf)$.

As above, by \Cref{Prop existence splitting arational tree}, since $\calf$ is nonsporadic, the group $C(\phi)$ fits into an exact sequence 
\[1 \to K \to C(\phi) \to \ZZ \times \prod_{[A] \in \calf} \Out(A)\] 
whose kernel $K$ is an abelian group. Since $\mathcal{F}$ is nonsporadic, for every $[A] \in \mathcal{F}$, the group $A$ is cyclic. Thus, for every $[A] \in \mathcal{F}$, the group $\Out(A)$ is isomorphic to $\ZZ/2\ZZ$. In particular, the image of the above homomorphism is virtually abelian. Thus, the group $C(\phi)$ does not contain a nonabelian free group. By  \Cref{Lem Tits}, the group $C(\phi)$ is abelian.
\end{proof}

\begin{lemma}\label{Lem:virtuallyfreeimpliesfree}
    Let $H$ be a subgroup of $\IA_3(3)$ preserving a free factor $A$ of $F_3$ of rank $2$. The image of $H$ in $\Out(A)$ is a free group.
\end{lemma}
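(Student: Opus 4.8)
The plan is to realise the image of $H$ in $\Out(A)$ as a torsion-free subgroup of a virtually free group, and then conclude with the classical fact that such a group is free.

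First, since $A$ is a rank-$2$ free factor of $F_3$ it is malnormal, so — exactly as recalled in \Cref{Section preliminaries} — an element of $H$ preserving $[A]$ induces a well-defined element of $\Out(A)$; this gives a homomorphism $r\colon H\to\Out(A)$. I claim $r(H)\subseteq\IA(A,3)$. Write $F_3=A\ast\langle g\rangle$, so that $H_1(F_3;\FF_3)=H_1(A;\FF_3)\oplus H_1(\langle g\rangle;\FF_3)$ with $H_1(A;\FF_3)$ the (split) image of the inclusion-induced map. Given $\psi\in H$, pick $\Psi\in\Aut(F_3)$ in the class of $\psi$ with $\Psi(A)=A$ (possible because $[A]$ is $H$-invariant). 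Then $\Psi$ acts on $H_1(F_3;\FF_3)$ exactly as $\psi$ does, hence trivially since $\psi\in\IA_3(3)$; and because $\Psi(A)=A$ this action preserves the summand $H_1(A;\FF_3)$ and restricts on it to the action of $\Psi|_A$. Therefore $\Psi|_A$ acts trivially on $H_1(A;\FF_3)$, i.e. $r(\psi)\in\IA(A,3)$. Since $A\cong F_2$, the group $\IA(A,3)\cong\IA_2(3)$ is torsion free by \Cref{Prop torsion free}, so $r(H)$ is torsion free.

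It remains to see that a torsion-free subgroup of $\Out(A)$ is free. Indeed $\Out(A)\cong\Out(F_2)\cong\GL_2(\ZZ)$ contains a free subgroup of finite index (for instance through $\PSL_2(\ZZ)\cong\ZZ/2\ast\ZZ/3$), hence so does $r(H)$; in particular $r(H)$ has a finite-index subgroup of cohomological dimension at most $1$, and since $r(H)$ is torsion free, Serre's theorem gives $\cd(r(H))\leq 1$. By the Stallings--Swan theorem, $r(H)$ is free.

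The argument is short and the only point requiring care is the bookkeeping in the middle step: that malnormality of the free factor $A$ together with $H$-invariance of $[A]$ lets one choose a representative fixing $A$ and thereby restrict the (trivial) $\FF_3$-homology action of $\psi$ to $A$. One could instead avoid the congruence-subgroup input and prove torsion-freeness of $r(H)$ directly from \Cref{Theo:freefactorconjclassfixed}$(2)$: if $r(\psi)$ were of finite order then a power of $\psi|_A$ would fix every conjugacy class of $A$, hence of $F_3$ by malnormality, so by \Cref{Theo:freefactorconjclassfixed}$(2)$ $\psi|_A$ fixes every conjugacy class of $A$ and is therefore inner since $\Out(F_2)$ acts faithfully on $H_1(F_2;\ZZ)$.
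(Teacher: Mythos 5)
Your proof is correct and follows essentially the same route as the paper: observe that the image of $H$ lands in $\IA(A,3)\cong\IA_2(3)$, which is torsion free by \Cref{Prop torsion free}, and conclude via the fact that a torsion-free subgroup of the virtually free group $\Out(A)\cong\GL_2(\ZZ)$ is free. You merely spell out two points the paper leaves implicit (the $H_1(\cdot;\FF_3)$ computation showing the image lies in $\IA(A,3)$, and the Serre/Stallings--Swan justification), which is fine.
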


\begin{proof}
    Since $A$ is a free factor of $F_3$, the image of $H$ in $\Out(A)$ is contained in $\IA_2(3)$. Since $A$ has rank $2$, the group $\Out(A)$ is isomorphic to $\mathrm{GL}_2(\ZZ)$ and is virtually free. Since $\IA_2(3)$ is torsion free by \Cref{Prop torsion free}, the group $\IA_2(3)$ is free and so is the image of $H$.
\end{proof}

\begin{lemma}\label{Lem:sporadiccasetwoelements}
Let $\phi \in \IA_3(3)$. Let $\mathcal{F}$ be a maximal $C(\phi)$-invariant free factor system. Suppose that $\mathcal{F}=\{[A],[B]\}$ with $F_3=A \ast B$ and $A$ is a nonabelian free group of rank $2$. Let $\Phi \in \phi$ be the unique representative of $\phi$ such that $\Phi(A)=A$ and $\Phi(B)=B$. One of the following holds.

\begin{enumerate}
\item The group $C(\phi)$ is isomorphic to $(F \cap \IA_3(3)) \times \ZZ$, where $F$ is a free subgroup of rank at most $2$.
\item The element $\phi$ is a Dehn twist. 
\end{enumerate}
\end{lemma}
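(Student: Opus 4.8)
My proof analyses $\phi$ through its restriction to the rank-$2$ free factor $A$. Since $F_3=A\ast B$ and $\rank A=2$, the factor $B$ is infinite cyclic, so $[A]\neq[B]$ and, as $\{[A],[B]\}$ is $C(\phi)$-invariant, every element of $C(\phi)$ preserves $[A]$ and $[B]$ individually. The conjugacy class of a generator $g$ of $B$ is $\phi$-periodic, hence fixed by \Cref{Theo:freefactorconjclassfixed}, so $\Phi(g)=g$ and $\Phi|_B=\id_B$; set $\alpha:=\Phi|_A\in\Aut(A)$ and $\phi|_A:=[\alpha]\in\Out(A)$. Restriction to the free factor $A$ is a homomorphism on $C(\phi)$, so $\phi|_A$ commutes with the image of $C(\phi)$, and since $C(\phi)\le\IA_3(3)$ this image lies in $\IA(A,3)\cong\IA_2(3)$. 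The key structural fact is that $\IA_2(3)$ is a finitely generated free group: it is torsion free by \Cref{Prop torsion free} and has finite index in the virtually free group $\Out(F_2)\cong\GL_2(\ZZ)$, and a torsion-free virtually free group is free.

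The argument splits according to whether $\phi|_A$ is trivial. If $\phi|_A=1$ (equivalently, since $\IA_2(3)$ is torsion free, if $\phi|_A$ has finite order), then $\phi$ induces the identity on both vertex groups of the free splitting $F_3=A\ast B$; concretely, up to an inner automorphism, $\phi$ fixes $A$ pointwise and sends $g\mapsto w_1 g w_2$ with $w_1,w_2\in A$, so $\phi$ is a product of Dehn twists along cyclic edges of a refinement of this splitting. By the equivalence of the two definitions of Dehn twist for elements of $\IA_N(3)$ recorded in \Cref{section:twists}, $\phi$ is a Dehn twist outer automorphism, giving conclusion $(2)$.

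Now suppose $\phi|_A\neq 1$. Then $\phi|_A$ is a nontrivial element of the free group $\IA_2(3)$, so its centraliser $C_{\IA_2(3)}(\phi|_A)$ is infinite cyclic. I apply \Cref{thm:shortexactsequencecentraliser}$(2)$ with $A_1=A$, $A_2=B$; since $\IA(B,3)=1$ (as $B\cong\ZZ$), this yields a homomorphism $\rho\colon C(\phi)\to\IA_2(3)$ whose kernel $K$ is, by the analysis in the proof of that theorem, the group of partial conjugations $\{[\Psi_w]\ :\ w\in\Fix(\alpha)\}\cap\IA_3(3)$, where $\Psi_w$ fixes $A$ pointwise and sends $g\mapsto wgw^{-1}$. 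By the fixed subgroup theorem of Bestvina--Handel, $\Fix(\alpha)$ is a finitely generated free subgroup of $A\cong F_2$ of rank at most $2$, so $K$ is a finite-index subgroup of the free group $F:=\{[\Psi_w]\ :\ w\in\Fix(\alpha)\}\le\Out(F_3)$ of rank at most $2$, and $F\cap\IA_3(3)=K$. The image $\rho(C(\phi))$ contains $\rho(\phi)=\phi|_A\neq 1$ and is contained in $C_{\IA_2(3)}(\phi|_A)$, so it is infinite cyclic; choosing $\zeta\in C(\phi)$ with $\rho(\zeta)$ a generator gives $C(\phi)=K\rtimes\langle\zeta\rangle$ with $\langle\zeta\rangle\cong\ZZ$.

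The final step, and the main obstacle, is to promote this to a direct product $C(\phi)=K\times\langle\zeta'\rangle\cong(F\cap\IA_3(3))\times\ZZ$, which is conclusion $(1)$. Conjugation by $\zeta$ permutes the partial conjugations in $K$ via an automorphism $\gamma$ of $\Fix(\alpha)$ induced by an element of $C_{\Aut(A)}(\alpha)$ whose class lies in $\IA_2(3)\le\Out(A)$; I would show $\gamma$ is an inner automorphism of $\Fix(\alpha)$ realised by some $a_0\in\Fix(\alpha)$, whence replacing $\zeta$ by $[\Psi_{a_0}]^{\pm1}\zeta$ makes $\zeta$ centralise $K$. Since $\phi|_A\neq 1$ has infinite order in $\GL_2(\ZZ)$ it is either hyperbolic or parabolic. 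In the hyperbolic case $\phi|_A$ fixes no nontrivial conjugacy class of $A$, so $\Fix(\alpha)=1$, $K=1$, and $C(\phi)\cong\ZZ$; nothing further is needed. The parabolic case is where the real work lies: there $[\alpha]$ is, after an allowable change of basis of $A$, a power of a transvection (a Dehn twist of $F_2$), and one must compute its centraliser in $\Out(F_2)$, intersect with $\IA_2(3)$ (obtaining an infinite cyclic group generated by a primitive twist power with $\Fix(\alpha)$ equal to the fixed subgroup of that primitive twist, of rank $1$ or $2$), observe that $\Fix(\alpha)$ is self-normalising in $A$ with trivial centraliser, and deduce that $\gamma$ --- being the restriction of an inner automorphism of $A$ that preserves $\Fix(\alpha)$ --- is inner on $\Fix(\alpha)$. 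Once the direct product structure is in hand, the rank bound on $F$ and the identification $F\cap\IA_3(3)=K$ are immediate, completing the proof.
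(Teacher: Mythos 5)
Your overall route is the paper's: pass to the unique representative preserving $A$ and fixing $B$ elementwise, conclude that $\Phi|_A$ inner gives the Dehn twist case, and otherwise show the image of $C(\phi)$ in $\Out(A)$ is infinite cyclic (free image plus centraliser of a nontrivial element) with kernel the partial conjugations $\{[\Psi_w]:w\in\Fix(\Phi|_A)\}$, of rank at most $2$ by Bestvina--Handel. The problem is that the lemma's conclusion $(1)$ is precisely the \emph{direct} product, and that is the one step you do not prove: your text for it consists of ``I would show'', ``one must compute'', ``the real work lies'', i.e.\ a plan rather than an argument. Worse, the shortcut you offer in the hyperbolic case is false as stated: every nontrivial element of $\IA_2(3)\subseteq\Out(A)\cong\GL_2(\ZZ)$ is congruent to the identity mod $3$, hence has determinant $1$ and is an orientation-preserving mapping class of the once-punctured torus, so it \emph{does} fix the conjugacy class of the commutator of a basis of $A$. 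Consequently a hyperbolic $\phi|_A$ can perfectly well have a canonical representative with $\Fix(\Phi|_A)$ a nontrivial cyclic group (generated by a conjugate of a commutator power), so $K$ need not vanish there and your semidirect-to-direct problem is not disposed of in that case either; the parabolic analysis you defer (primitive twist powers, self-normalisation of $\Fix(\alpha)$, innerness of $\gamma$) is never carried out.

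The paper closes exactly this gap without any $\GL_2(\ZZ)$ case division, and this is worth internalising: since $\psi\mapsto\Psi|_A$ is \emph{injective} into $\Aut(A)$ and canonical representatives commute, the image of $C(\phi)$ in $\Out(A)$ is infinite cyclic, generated by the class of a root $\sqrt{\Phi|_A}$ of $\Phi|_A$; then \Cref{Prop:powercommute} (in $\IA_N(3)$, commuting powers force commuting elements, which together with \Cref{Theo:roots} is the root/power rigidity of $\IA_N(3)$) upgrades ``every element of $C(\phi)$ commutes with $\phi$'' to ``every element of $C(\phi)$ commutes with $\sqrt{\Phi|_A}$''. That centrality is exactly what you were trying to extract from an explicit parabolic computation, and it immediately gives $C(\phi)\cong K\times\langle\sqrt{\Phi|_A}\rangle$ with $K\subseteq\{\mathrm{ad}_g:g\in\Fix(\Phi|_A)\}$, whence conclusion $(1)$. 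So: same skeleton, but your proposal is missing the decisive commutation step, and the one concrete claim you make towards it (hyperbolic $\Rightarrow\Fix(\Phi|_A)=1$) is incorrect.
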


\begin{proof}
By for instance~\cite{levitt2005}, for every $\psi \in C(\phi)$, there exists a unique representative $\Psi \in \psi$ such that $\Psi(A)=A$ and $\Psi(B)=B$. In particular, if $\psi \in C(\phi)$, then $\Psi$ commutes with $\Phi$.

Note that, since the rank of $A$ is equal to $2$, the rank of $B$ is equal to $1$. By  \Cref{Theo:freefactorconjclassfixed}~$(2)$, for every $\psi \in C(\phi)$, the automorphism $\Psi$ fixes $B$ elementwise. Thus, the homomorphism which sends $\psi \in C(\phi)$ to $\Psi|_A \in \Aut(A)$ is injective.

Suppose first that $\Phi|_A$ is inner: there exists $g \in A$ such that $\Phi|_A=\mathrm{ad}_g$. Then, $\phi$ is a Dehn twist.

Suppose now that $\Phi|_A$ is not inner. By \Cref{Lem:virtuallyfreeimpliesfree}, the image of $C(\phi)$ in $\Out(A)$ is free and contained in the centraliser of the image of $\phi$. Thus, the image of $C(\phi)$ in $\Out(A)$ is infinite cyclic, generated by the outer class of a root $\sqrt{\Phi|_A}$ of $\Phi|_A$. By  \Cref{Prop:powercommute}, every element of $C(\phi)$ commutes with $\sqrt{\Phi|_A}$.

Hence $C(\phi)$ is isomorphic to $K \times \langle\sqrt{\Phi|_A} \rangle$, where $K$ is contained in the subgroup of inner automorphisms of $A$. An inner automorphism $\mathrm{ad}|_g$ commutes with $\Phi|_A$ if and only if $\Phi|_A(g)=g$. Thus, $K$ is contained in $K_{\Phi}=\{\mathrm{ad}_g \in \Aut(A)\;|\; g \in \Fix(\Phi|_A)\}$. Conversely, any element of $K_{\Phi}$ extends to an automorphism of $F_3$ commuting with $\Phi$. Thus, $C(\phi)$ is isomorphic to $(K_{\phi} \cap \IA_3(3)) \times \langle\sqrt{\Phi|_A} \rangle$. Moreover, the free group $K_{\phi}$ has rank at most $2$ by the work of Bestvina and Handel~\cite{BesHan92}.
\end{proof}

\begin{lemma}\label{Lem:sporadiccaseoneelement}
Let $\phi \in \IA_3(3)$. Let $\mathcal{F}$ be a  maximal $C(\phi)$-invariant free factor system. Suppose that $\mathcal{F}=\{[A]\}$ with $F_3=A \ast \ZZ$ and let $t$ be a generator of the second factor. One of the following holds.

\begin{enumerate}
\item The group $C(\phi)$ is isomorphic to $((F_\ell \times F_r)\cap \IA_3(3)) \times \ZZ$, where $F_\ell$ and $F_r$ are two finitely generated free groups of rank at most $2$.

\item The element $\phi$ is a Dehn twist. 
\end{enumerate}
\end{lemma}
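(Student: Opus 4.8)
The plan is to follow the proof of \Cref{Lem:sporadiccasetwoelements} as closely as possible; the essential new feature is that, unlike the rank-one factor $B$ there, the rank-one factor $\langle t\rangle$ here need not be fixed by the relevant representatives --- the best one obtains is $\Phi(t)\in t\cdot A$ --- so a second free factor appears in the centraliser. First I would set up coordinates: by Levitt~\cite{levitt2005}, the subgroup of $\Out(F_3)$ fixing $[A]$ and acting trivially on $H_1$ is (discarding the reflection $t\mapsto t^{-1}$, which cannot lie in $\IA_3(3)$) the image of the group $A\rtimes\Aut(A)$ of automorphisms $\Psi$ with $\Psi(A)=A$ and $\Psi(t)\in tA$; in particular each $\psi\in C(\phi)$ has a \emph{unique} such representative $\Psi$, the map $\psi\mapsto\Psi$ is a homomorphism, and these representatives pairwise commute. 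Writing $\Psi(t)=ta_\psi$, this gives a homomorphism $r\colon C(\phi)\to\Out(A)$, $\psi\mapsto[\Psi|_A]$, whose image is torsion free, hence contained in $\IA_2(3)$, by \Cref{Theo:freefactorconjclassfixed}. Let $\Phi$ be the distinguished representative of $\phi$, so $\Phi(t)=ta$.

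Next I would run the dichotomy on whether $\Phi|_A$ is inner in $\Aut(A)$. If $\Phi|_A$ is not inner, then $r(\phi)$ has infinite order; by \Cref{Lem:virtuallyfreeimpliesfree} the image $r(C(\phi))$ is free, and by \Cref{Prop:powercommute} it centralises $r(\phi)$, hence is infinite cyclic, generated by a root $\rho$ of $r(\phi)$. Lifting $\rho$ to $C(\phi)$ and using that $\rho\in\IA_2(3)$ fixes every $\rho$-periodic conjugacy class (\Cref{Theo:freefactorconjclassfixed}) to see the lift centralises $\ker r$, one gets $C(\phi)\cong\ker r\times\ZZ$. It remains to identify $\ker r$: an element has distinguished representative $x\mapsto hxh^{-1}$ on $A$, $t\mapsto ta_\psi$, and commutation with $\Phi$ inside $A\rtimes\Aut(A)$ forces $h\in\Fix(\Phi|_A)$ together with a compatibility equation on $a_\psi$. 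The pure transvections commuting with $\Phi$ form $F_\ell\cong\Fix(\mathrm{ad}_a\circ\Phi|_A)$; the quotient $\ker r/F_\ell$ embeds into $F_r:=\{\mathrm{ad}_h:h\in\Fix(\Phi|_A)\}\cong\Fix(\Phi|_A)$; and since an element with inner part $\mathrm{ad}_h$ ($h\in\Fix(\Phi|_A)$) affects the transvection coordinate only through conjugation by $h$, the extension splits, $\ker r\cong F_\ell\times F_r$, with $F_\ell,F_r$ free of rank at most $2$ by Bestvina--Handel~\cite{BesHan92}. Intersecting with $\IA_3(3)$ gives conclusion~$(1)$.

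If $\Phi|_A$ is inner, write $\Phi|_A=\mathrm{ad}_g$ and pass to $\Phi'=\mathrm{ad}_{g^{-1}}\Phi$, which fixes $A$ pointwise and sends $t\mapsto g^{-1}t(ag)$. Here I would compute $\Fix(\Phi'')$ for the representatives $\Phi''\in\phi$, exactly as in the proof that $F_N$ is one-ended relative to $\cala$ for Dehn twists: if these fixed subgroups together make $F_3$ one-ended relative to $\cala$ with all JSJ vertices rigid --- the case where $\phi$ is a twist about a one-edged splitting $F_3=A\ast_{\langle c\rangle}(\langle c\rangle\ast\langle t\rangle)$ --- we are in conclusion~$(2)$, with the short exact sequence supplied by \Cref{Thm:sesDehntwist}. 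Otherwise the relevant edge group is trivial, $\phi$ is (conjugate to) a transvection or partial conjugation fixing $A$ pointwise, and the coordinate analysis of the previous paragraph applies with $\Phi|_A=\mathrm{id}$: the image $r(C(\phi))$ is again cyclic --- possibly trivial, using that a nontrivial element of $\IA_2(3)$ centralising $\phi$ must preserve a common rank-one free factor of $A$ --- and $C(\phi)$ is a product of at most two free groups of rank $\le 2$ and a (possibly trivial) cyclic group, i.e.\ again conclusion~$(1)$, one of the two free groups being the $\ZZ$-factor when necessary.

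The step I expect to be the main obstacle is this last one: deciding cleanly, when $\Phi|_A$ is inner, whether $\phi$ is a genuine Dehn twist or merely a transvection/partial conjugation landing in conclusion~$(1)$. Unlike in \Cref{Lem:sporadiccasetwoelements}, where the rank-one factor is fixed, the shift $a$ in $\Phi(t)=ta$ prevents the immediate identification of $\phi$ with a twist, so one is forced to track $\cala$ and its JSJ decomposition directly (much as in the justification of the Dehn-twist case of \Cref{thm:classificationcentralisers}); verifying that the centraliser equations in $A\rtimes\Aut(A)$ genuinely produce a \emph{direct} product $F_\ell\times F_r$ rather than a nonsplit extension is a secondary, more routine, complication.
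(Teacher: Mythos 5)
Your treatment of the non-inner case is essentially the paper's argument: the image of $C(\phi)$ in $\Out(A)$ is infinite cyclic by \Cref{Lem:virtuallyfreeimpliesfree} together with the fact that it centralises the image of $\phi$, the kernel of $C(\phi)\to\Out(A)$ is identified (as in Case~1 of the proof of \Cref{thm:shortexactsequencecentraliser}) with the intersection of $\IA_3(3)$ with $\Fix(\Phi|_A)\times\Fix(\mathrm{ad}_{a^{-1}}\circ\Phi|_A)$, and the rank bound comes from Bestvina--Handel. Your explicit coordinate computation in $A\rtimes\Aut(A)$ is just a hands-on version of the citation to Levitt, and your splitting-off of the $\ZZ$ factor is at the same level of detail as the paper's (which uses \Cref{Prop:powercommute} to get a central root).

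The genuine gap is in the inner case, which you yourself flag as the main obstacle. The paper disposes of it in one line: if $\Phi|_A$ is inner, then $\phi$ lies in the kernel of $C(\phi)\to\Out(A)$, and by Levitt~\cite{levitt2005} (as recalled in \Cref{section:twists}: when edge stabilisers are finitely generated the kernel of $\Stab(\calt)\to\prod_v\Out(G_v)$ consists of Dehn twists, every bitwist being a Dehn twist in this paper's terminology) every element of that kernel --- every ``transvection'', ``partial conjugation'', or mixed bitwist $t\mapsto h\,t\,a$ --- \emph{is} a Dehn twist, so conclusion~$(2)$ holds immediately. Your extra dichotomy rests on the misconception that a transvection or partial conjugation is not a Dehn twist; in fact, for example, $t\mapsto ta$ with $a\in A$ nontrivial is the twist about the HNN splitting of $F_3$ over $\langle a\rangle$ with vertex group $\langle A, tat^{-1}\rangle$, which is a cyclic splitting, so no JSJ bookkeeping is needed to ``decide'' twist-ness. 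Moreover your fallback claim for the alleged non-twist subcase --- that $C(\phi)$ is then a product of at most two free groups of rank $\le 2$ and a cyclic group, justified by the unproven assertion that a nontrivial element of $\IA_2(3)$ centralising $\phi$ preserves a common rank-one free factor of $A$ --- is unsupported, and it is exactly the kind of statement one cannot get for free: centralisers of such kernel elements are governed by \Cref{Thm:sesDehntwist} and can involve McCool groups of rank-$3$ vertex groups, not just small products of free groups. So the step you identified as the obstacle is resolved not by further case analysis but by invoking the already-quoted fact that the kernel consists of Dehn twists; without that, your proof of the lemma is incomplete in the inner case.
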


\begin{proof}
Recall that, by~\cite{levitt2005}, the kernel $K$ of $C(\phi) \to \Out(A)$ is isomorphic to the intersection of $\IA_3(3)$ with a direct product $F_{\ell} \times F_r$ of two free (maybe cyclic or trivial) normal subgroups of $C(\phi)$. Every element of $F_{\ell} \times F_r$ is a Dehn twist. Moreover, as explained in the last paragraph of the proof of Case~$1$ in the proof of  \Cref{thm:shortexactsequencecentraliser}, each factor of $F_\ell \times F_r$ corresponds to the fixed subgroup of an automorphism in the outer class $\phi$. The bound on the rank of the free groups then follows from the work of Bestvina and Handel~\cite{BesHan92}.

We may suppose that $\phi$ is not a Dehn twist, so that the image of $\phi$ in $\Out(A)$ is not trivial. By \Cref{Lem:virtuallyfreeimpliesfree}, as in the proof of \Cref{Lem:sporadiccasetwoelements}, the image of $C(\phi)$ in $\Out(A)$ is infinite cyclic, generated by a root of $\phi$ which commutes with every element of $C(\phi)$.

Combining the above two paragraphs, we see that $C(\phi)$ is isomorphic to $((F_\ell \times F_r)\cap \IA_3(3)) \times \ZZ$, where $F_\ell$ and $F_r$ are two finitely generated free groups of rank at most $2$.
\end{proof}

\begin{proof}[Proof of \Cref{Theo type VF}] Let $\phi \in \IA_3(3)$ and let $C(\phi)$ be the centraliser of $\phi$ in $\IA_3(3)$. If $\phi$ is a Dehn twist, its centraliser in $\Out(F_3)$ is of type $\mathsf{VF}$ by the work of Rodenhausen--Wade~\cite{rodenhausen2015centralisers} (see also the work of Andrew--Martino~\cite{AndrewMartino2022}). Moreover, the short exact sequence given in case~$(4)$ follows from \Cref{Thm:sesDehntwist}.

So suppose that $\phi$ is not a Dehn twist. Let $\mathcal{F}$ be a maximal $C(\phi)$-invariant free factor system. If $\mathcal{F}$ is nonsporadic, by  \Cref{Lem:typevfnonsporadic}, the group $C(\phi)$ is abelian. By  \Cref{Theo virtually abelian is abelian}, it is finitely generated. If $\mathcal{F}$ is sporadic, by \Cref{Lem:sporadiccasetwoelements,Lem:sporadiccaseoneelement}, the centraliser of $\phi$ satisfies one of Assertions~$(2),(3)$ of  \Cref{Theo type VF}. This concludes the proof.
\end{proof}

\section{Proper geometric dimension of Weyl groups of Dehn twists}\label{sec:GeomDimDehn}

In this section, we specify our study to the case of Dehn twists outer automorphisms. We will prove the following proposition.

\begin{prop}\label{Prop:gdDehntwists}
    Let $N\geq 2$ and let $\phi \in \IA_N(3)$ be a Dehn twist. The geometric dimension $\underline{\gd}(W(\phi))$ of the Weyl group of $\phi$ is bounded by $2N-4$.
\end{prop}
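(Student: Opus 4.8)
The starting point is the Rodenhausen--Wade short exact sequence from \Cref{Thm:sesDehntwist},
\[1 \to K \to C(\phi) \to \prod_{v\in V(F_N \backslash T_\cala)} \IA_v(3) \to 1,\]
where $K$ is free abelian of rank $|E(F_N \backslash T_\cala)|$ and $\phi \in K$. Quotienting by $\langle \phi\rangle$ one gets
\[1 \to K/\langle\phi\rangle \to W(\phi) \to \prod_{v\in V(F_N \backslash T_\cala)} \IA_v(3) \to 1,\]
so $K/\langle\phi\rangle$ is a finitely generated free abelian group of rank $|E(F_N\backslash T_\cala)| - 1$ (using that $\phi$ is primitive, i.e.\ root-closed, which one may assume, or handle the finite-index issue by passing to $\langle\phi\rangle$'s root closure in the abelian group $K$). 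The strategy is to bound $\underline{\gd}$ of the kernel, of the quotient, and then use the standard inequality for extensions: if $1\to N' \to G \to Q \to 1$ is an extension then $\underline{\gd}(G) \le \underline{\gd}(N') + \underline{\gd}(Q)$ (one must be slightly careful since the relevant statement for proper classifying spaces needs $\underline{\gd}$ on the fibre to be replaced by $\underline{\gd}$ of $N'$; here $N'$ is torsion-free so $\underline{\gd}(N') = \cd(N')$, and for the quotient we use a model for $\underline{E}Q$ pulled back along $G\to Q$, whose isotropy groups are extensions of finite groups by $N'$, which are virtually $\ZZ^{d}$ and hence have a $d$-dimensional model for $\underline{E}$).

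\textbf{Bounding the pieces.} First, $\underline{\gd}(K/\langle\phi\rangle) = \cd(K/\langle\phi\rangle) = |E(F_N\backslash T_\cala)| - 1$. Second, for the quotient $Q = \prod_{v} \IA_v(3)$, each factor $\IA_v(3)$ is a finite-index subgroup of $\Out(G_v, \mathrm{Inc}_v)$ where $G_v$ is a free group of some rank $r_v$; since $\Out(G_v,\mathrm{Inc}_v)$ is a McCool-type group acting properly on (a deformation retract of) relative Outer space of dimension related to $r_v$, one has $\underline{\gd}(\IA_v(3))$ bounded — in fact I would use $\vcd$/$\gd$ bounds for these relative automorphism groups, e.g.\ $\underline{\gd}(\IA_v(3)) \le 2r_v - 2$ or better. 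The key counting step is then to combine: writing $E = |E(F_N\backslash T_\cala)|$, $r_v = \rank(G_v)$, and using that the underlying graph is a tree (the JSJ graph of a one-ended group relative to $\cala$ with the vertex groups being the fixed subgroups, which are rigid for a Dehn twist, so the quotient graph $F_N\backslash T_\cala$ is a tree — one should verify this, or at least that it can be taken to be a tree), the rank formula for a graph of groups gives $N = 1 - \chi + \sum_v r_v$ where $\chi = |V| - |E|$ is the Euler characteristic of the graph; for a tree $\chi = 1$ so $N = \sum_v r_v - E$, i.e.\ $\sum_v r_v = N + E$. Then summing the bounds, $\underline{\gd}(W(\phi)) \le (E-1) + \sum_v(2r_v - 2) = E - 1 + 2(N+E) - 2|V| = 2N + 3E - 2|V| - 1$; since the graph is a tree $|V| = E + 1$, this is $2N + 3E - 2E - 2 - 1 = 2N + E - 3$. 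This is not yet $\le 2N - 4$ unless $E$ is small, so the naive bound is too weak and must be sharpened.

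\textbf{The main obstacle and how to get around it.} The real work — and the step I expect to be the crux — is that one cannot afford the wasteful bound $\underline{\gd}(\IA_v(3)) \le 2r_v - 2$ summand-by-summand together with the full rank $E-1$ of the free abelian kernel; these overlap. Instead I would use a single, more efficient model: build a model for $\underline{E}W(\phi)$ directly as a fibre-bundle-like space over $\underline{E}Q$ with fibre the (torsion-free) abelian group $K/\langle\phi\rangle$, and observe that the relevant dimension is $\cd(K/\langle\phi\rangle) + \underline{\gd}(Q)$ but that $\underline{\gd}(Q) = \sum_v \underline{\gd}(\IA_v(3))$ can be bounded using the \emph{spine} of relative Outer space rather than the whole space, giving $\underline{\gd}(\IA_v(3)) \le 2r_v - 3$ when $r_v \ge 2$ and $\underline{\gd}(\IA_v(3)) = 0$ when $r_v \le 1$ (rank-$1$ vertex groups contribute nothing since $\Out(\ZZ,\ldots)$ is finite and $\IA_v(3)$ trivial) — and crucially to reorganise the counting so that each edge of the JSJ tree is "charged" either to the free abelian kernel or to a vertex-group summand but not both. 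Concretely: reduce first to the case where every vertex group has rank $\ge 2$ by collapsing valence-one rank-$\le 1$ vertices and absorbing their edges, after which $\sum_v (2r_v - 3) + (E-1) \le 2N - 4$ should follow from $\sum_v r_v = N + E$, $|V| = E+1$, and $r_v \ge 2$ by a clean inequality; the edge count $E$ then gets absorbed because $\sum_v 3 = 3(E+1)$ is subtracted. I would carry this out by: (1) establishing the tree structure and rank formula for $F_N\backslash T_\cala$; (2) reducing to rank-$\ge 2$ vertex groups; (3) citing or proving $\underline{\gd}(\Out(G_v,\mathrm{Inc}_v)) \le 2r_v - 3$ via relative Outer space spines (this is where I anticipate needing a careful reference to McCool group / relative Outer space dimension results, e.g.\ work of Guirardel--Levitt or Collins--Turner); (4) applying the extension inequality; (5) doing the arithmetic. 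The bookkeeping in steps (2) and (5) — ensuring no edge is double-counted and the inequality closes at exactly $2N-4$ — is the part most likely to require genuine care rather than being routine.
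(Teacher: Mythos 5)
Your overall strategy matches the paper's: start from the Rodenhausen--Wade sequence (\Cref{Thm:sesDehntwist}), pass to $W(\phi)$, and bound the extension by $\underline{\gd}(\text{kernel})+\underline{\gd}(\text{quotient})$ via \Cref{prop:haefliger}. You also correctly diagnose that the naive per-vertex bound is too weak and that the crux is a refined count balancing the vertex contributions against $|E(F_N\backslash T_\cala)|$. However, there is a genuine gap at exactly that crux: your count only closes under the assumption that the quotient graph $F_N\backslash T_\cala$ is a tree, and this is false in general. The JSJ decomposition relative to $\cala$ can have HNN-type edges (loops); the simplest Dehn twists are twists over one-loop splittings, and the paper's own arguments (\Cref{Lem:IAactstriviallyquotientgraph}, and the loop case in the proof of \Cref{Lem:Bounddimensionimageweylgroup}) explicitly accommodate this. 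The correct rank formula for infinite cyclic edge groups is $N-1=\sum_v(\rank(G_v)-1)$, i.e.\ $\sum_v \rank(G_v)=N-1+|V|$ (your formula $N=1-\chi+\sum_v r_v$ is the trivial-edge-group formula; your conclusion $\sum_v r_v=N+E$ happens to agree with the cyclic-edge-group formula only when the graph is a tree). Redoing your arithmetic without the tree hypothesis gives $\sum_v(2r_v-3)+(|E|-1)=2N-3-|V|+|E|$, which exceeds $2N-4$ as soon as the graph has an independent cycle, so the argument does not go through in the HNN case. Your proposed fix of ``collapsing valence-one rank-$\le 1$ vertices'' is also not available in general: the JSJ tree here is a tree of cylinders, whose cyclic-stabiliser vertices can have arbitrary valence and cannot simply be absorbed without changing the edge groups and the kernel rank.

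The paper closes the count differently, without any tree assumption. Instead of the uniform bound $2r_v-3$, it proves (\Cref{coro:boundvcdMccool}, built on Meucci's computation \Cref{Thm:dimensionrelative} and Property~$(CF)$) the finer bound $\underline{\gd}(\IA_v(3))\le 2\rank(G_v)-2-|\calf_v|$, where $\calf_v$ is the free factor system of $G_v$ generated by the incident edge groups, and then establishes the purely combinatorial claim $\sum_v|\calf_v|\ge |E|+1$ by induction on $|E|$, the loop case being handled via the Shenitzer--Swarup unfolding lemma (\Cref{Lem:unfoldingedgecyclicsplitting}). Combined with $N-1=\sum_v(\rank(G_v)-1)$ this yields $\sum_v\underline{\gd}(\IA_v(3))\le 2N-3-|E|$ (\Cref{Lem:Bounddimensionimageweylgroup}), which absorbs the kernel rank $|E|-1$ exactly. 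To repair your proposal you would need a substitute for your steps (1)--(2): either prove the inequality $\sum_v|\calf_v|\ge|E|+1$ (or an equivalent edge-charging scheme valid for graphs with loops), which is the real content, or find another way to beat the uniform $2r_v-3$ bound at vertices incident to several edges.
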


Recall that for a group $G$, the \emph{proper geometric dimension}, $\underline{\gd}(G)$, is defined to be the minimal $n\in\NN\cup\{\infty\}$ such that $G$ admits an $n$-dimensional model for $\underline{E}G=E_\FIN G$.

In order to prove \Cref{Prop:gdDehntwists}, we take advantage of the short exact sequence given by Theorem~\ref{Thm:sesDehntwist}. We need to understand more precisely the groups $\Out(G_v,\mathrm{Inc}_v)$ as defined in this theorem. We first recall a result, due to Meucci (see also the work of Day--Sale--Wade~\cite{day2021calculating}).

\begin{thm}\cite{meucci2011relative}\label{Thm:dimensionrelative}
    Let $\calf$ be a free factor system of $F_N$, let $\calf^\ZZ$ be the subset of $\calf$ consisting of the conjugacy classes of cyclic subgroups and let $\calf^{\ge 2}=\calf-\calf^\ZZ$. Then $$\underline{\gd}(\Out(F_N,\calf^{(t)}))=2N-2\sum_{[A]\in \calf^{\ge 2}}(\rank(A)-1)-2-|\calf^\ZZ|.$$

\end{thm}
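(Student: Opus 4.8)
Write $F_N = A_1 \ast \cdots \ast A_k \ast F_s$ with $F_s$ free of rank $s$, put $r_i = \rank(A_i)$, and let $k_1 = |\calf^\ZZ|$ and $k_2 = |\calf^{\ge 2}|$ be the numbers of cyclic, resp. higher‑rank, factors. Since $N = \sum_i r_i + s$, one checks immediately that the asserted value equals $d := k_1 + 2k_2 + 2s - 2$, so the plan is to prove $\underline{\gd}(\Out(F_N,\calf^{(t)})) = d$ by matching upper and lower bounds: construct a cocompact $d$‑dimensional model for $\underline{E}\,\Out(F_N,\calf^{(t)})$, and exhibit a subgroup of cohomological dimension $d$.

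For the upper bound, the model I would use is (the spine of) the relative outer space $\calO_\calf$ attached to the decomposition $F_N = A_1 \ast \cdots \ast A_k \ast F_s$: a point is a marked metric graph of groups whose nontrivial vertex groups are the conjugates of the $A_i$ and whose edge groups are trivial, carrying in addition the data pinning each $A_i$ (equivalently, markings taken rel $\calf$), which is exactly what forces the vertex‑group automorphisms to act trivially. By Guirardel--Levitt's contractibility of outer space for free products together with Meucci's description of the cocompact spine $K_\calf$, the group $\Out(F_N,\calf^{(t)})$ acts cocompactly on $K_\calf$ with finite cell stabilisers --- indeed the torsion‑free finite‑index subgroup $\Out(F_N,\calf^{(t)})\cap \IA_N(3)$ (torsion‑freeness by \Cref{Prop torsion free}) acts freely --- so $K_\calf$ is a model for $\underline{E}\,\Out(F_N,\calf^{(t)})$ of dimension $\dim K_\calf$. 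The combinatorial heart is then $\dim K_\calf = d$: the dimension is the maximal length of a chain of forest collapses, hence $E_{\max}-1$, where $E_{\max}$ is the number of edges of a \emph{reduced} marked rel‑graph --- every trivial‑group vertex trivalent, every $A_i$ with $r_i\ge 2$ at a rigid valence‑one vertex, and every cyclic $A_i$ absorbed into its unique incident edge. An Euler‑characteristic bookkeeping on the underlying graph (first Betti number $s$, $k_2$ rigid leaves, $k_1$ cyclic leaves each saving one edge) gives $E_{\max} = d+1$, with the usual separate care in the degenerate cases $s=0$ or $k_2=0$.

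For the lower bound, the plan is to realise $d$ by an explicit subgroup $H$ built from the three independent sources of dimension: (i) for each cyclic $A_i = \langle a_i\rangle$, an inter‑factor Dehn twist along $a_i$, contributing a $\ZZ$; (ii) for each $A_i$ with $r_i\ge 2$, the two commuting free subgroups ``conjugate $A_i$ on the left, resp.\ on the right, by an element of $A_i$'', each isomorphic to $A_i \cong F_{r_i}$, together contributing cohomological dimension $2$; (iii) a copy of $\Aut(F_s)$ acting on the free part, of virtual cohomological dimension $2s-2$ (read as $0$ when $s\le 1$). One checks these pieces commute appropriately, so that up to finite index $H \cong \ZZ^{k_1}\times \prod_{r_i\ge 2}(F_{r_i}\times F_{r_i})\times \Aut(F_s)$, whence $\cd(H)$ --- equivalently $\underline{\cd}$ of its torsion‑free finite‑index subgroup --- equals $k_1 + 2k_2 + (2s-2) = d$ by additivity of cohomological dimension over direct products. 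Alternatively, and avoiding the explicit generators, one can invoke that $\Out(F_N,\calf^{(t)})$ is a virtual duality group (as $\Out(F_N)$ is, by Bestvina--Feighn \cite{BestvinaFeighn2000}, the duality property being inherited by the relative McCool groups assembled from free‑splitting pieces) and identify its dualising dimension as $d$, which yields $\vcd \ge d$ directly. Combining with the upper bound gives $\underline{\gd}(\Out(F_N,\calf^{(t)})) = d$, the stated formula.

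The step I expect to be the main obstacle is the edge count in the upper bound. One must be careful to use the \emph{correct} finite‑stabiliser model: the naive deformation space of $(F_N,\calf)$‑free splittings (with the $A_i$ elliptic) has infinite point stabilisers coming from the ``inner‑on‑a‑factor'' automorphisms, so the extra pinning data is essential, and it is precisely this refinement that makes the reduced‑graph combinatorics --- and in particular the correction term $-|\calf^\ZZ|$ distinguishing cyclic from higher‑rank factors --- subtle to get exactly right.
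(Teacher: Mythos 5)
The paper does not prove this statement at all: it is imported verbatim from Meucci's thesis, so the only comparison available is with the cited relative-outer-space argument, which your outline superficially follows but both of your key steps fail as written. The decisive problem is the lower bound. The subgroup you posit, $\ZZ^{k_1}\times\prod_{r_i\ge 2}(F_{r_i}\times F_{r_i})\times\Aut(F_s)$, does not exist in general: take $s=0$ and $\calf=\{[A_1],[A_2]\}$ with both ranks at least $2$. Then every element of $\Out(F_N,\calf^{(t)})$ preserves the one-edge splitting $F_N=A_1\ast A_2$ and acts trivially on both vertex groups, so the whole group is the group of twists of that splitting, which is isomorphic to $A_1\times A_2$ and has cohomological dimension $2=d$; your $H$ would have dimension $4$. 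More generally your count $k_1+2k_2+\max(2s-2,0)$ exceeds $d=k_1+2k_2+2s-2$ by exactly $2$ whenever $s=0$, so the claimed subgroup would contradict the theorem itself. The structural error is that a factor sitting at a valence-one vertex of a splitting contributes only one copy of itself to the twist group (two copies require two incident edge-ends, or a free-part basis letter to transvect on both sides), and the $\Aut(F_s)$ piece does not commute with transvections $x\mapsto axb$ by factor elements since it moves $x$; this failure of the direct-product structure is precisely where the correction terms $-2$ and $-|\calf^\ZZ|$ come from. The fallback appeal to virtual duality of these relative McCool groups is not something you can invoke from the literature cited here.

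The upper bound also has a genuine gap. You correctly observe that the naive spine of the deformation space with the $A_i$ elliptic has infinite stabilisers (the twists lie in $\Out(F_N,\calf^{(t)})$ and fix points), but the ``pinned'' model is never actually defined, and the combinatorics you then run is the combinatorics of the unpinned spine, whose dimension is too small: the dimension of a spine is the length of a maximal chain of collapses, i.e.\ $E_{\max}-E_{\min}$, not $E_{\max}-1$, and for graphs with each higher-rank factor at a valence-one vertex and trivial vertices trivalent the Euler count gives $E_{\max}=2k+3s-3$. For $F_N=A\ast\langle x\rangle$ with $\rank(A)\ge 2$ this yields $E_{\max}=2$ and a one-dimensional unpinned spine, while $d=2$; the missing dimensions are exactly the twist directions that any correct model (as in Meucci's construction) must reinstate, so whatever extra structure the pinning adds necessarily changes the cell structure, and your bookkeeping $E_{\max}=d+1$ neither follows from nor even agrees with the graphs you describe. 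As it stands, neither inequality is established.
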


The proof of the following proposition is implicit in \cite[Proof~of~theorem~3.1]{Lu00}, see also \cite[Theorem~2.3]{MPSS20}. 

\begin{prop}\label{prop:haefliger}
Let $G$ be a group. Let $f:G\to Q$ be a group homomorphism. Let $Y$ be a model for $\underline{E}Q$. Then
\[\underline{\gd}(G)\leq \max\{\underline{\gd}(G_{\sigma})+\dim(\sigma)| \ 
\sigma \text{ is a cell of } \ Y \},\]
where the stabilisers are taken with respect to the $G$-action on $Y$ induced by the projection.

In particular if $Q$ torsion free and $K$ is the kernel of $f$, then  $\underline{\gd}(G)\leq \underline{\gd}(Q)+\underline{\gd}(K)$.
\end{prop}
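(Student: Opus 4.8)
The plan is to reduce, by pulling $Y$ back along $f$, to a purely cellular "stacking'' construction: build a model for $\underline{E}G$ by gluing a model of $\underline{E}G_{\sigma}$ over each cell $\sigma$ of the pulled-back complex (the complex-of-spaces / Haefliger construction). First I would form the pullback $G$-CW-complex $X := f^{*}Y$; that is, $X$ equals $Y$ as a space, with $G$ acting through $f$, and with the $G$-CW-structure whose cells are those of $Y$. A cell $\sigma$ then has isotropy group $G_{\sigma} = f^{-1}(Q_{\sigma})$, where $Q_{\sigma} = \Stab_{Q}(\sigma)$ is finite because $Y$ is a model for $\underline{E}Q$. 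Since $(f^{*}Y)^{H} = Y^{f(H)}$, which is contractible when $f(H)$ is finite and empty otherwise, $X$ is a model for $E_{\calg}G$ where $\calg := \{\, H \leq G : |f(H)| < \infty \,\}$ is a family of subgroups containing $\FIN$. So it suffices to prove the general fact: \emph{if $X$ is a $G$-CW-model for $E_{\calg}G$ with $\FIN \subseteq \calg$, then $\underline{\gd}(G) \le \max\{\,\underline{\gd}(G_{\sigma}) + \dim\sigma : \sigma \text{ a cell of } X\,\}$}, since its right-hand side is the quantity in the statement.

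To prove this I would construct a $G$-map $p\colon E \to X$ by induction on the skeleta $X^{(n)}$, keeping $E^{(n)} \to X^{(n)}$ a $G$-CW-complex with \emph{finite} isotropy. For each $G$-orbit of cells, represented by $\sigma$, fix once and for all a model $Z_{\sigma}$ for $\underline{E}G_{\sigma}$ of dimension $\underline{\gd}(G_{\sigma})$. Set $E^{(0)} = \coprod_{\sigma} G \times_{G_{\sigma}} Z_{\sigma}$ over the $0$-cell representatives, mapping each block constantly to the corresponding vertex. Inductively, an $n$-cell orbit of $X$ has the form $G/G_{\sigma} \times D^{n}$ with $G_{\sigma}$-equivariant attaching map $S^{n-1} \to X^{(n-1)}$ ($G_{\sigma}$ acting trivially on $D^{n}$). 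Pulling $p^{(n-1)}$ back along it gives a $G_{\sigma}$-CW-complex $S_{\sigma}$ of finite isotropy, with $G_{\sigma}$-maps to $S^{n-1}$ and to $E^{(n-1)}$; the universal property of $\underline{E}G_{\sigma}$ then supplies a $G_{\sigma}$-map $S_{\sigma} \to Z_{\sigma}$, hence a $G_{\sigma}$-map $S_{\sigma} \to S^{n-1} \times Z_{\sigma}$. I would then obtain $E^{(n)}$ as the $G$-pushout gluing $\coprod_{\sigma} G \times_{G_{\sigma}} (D^{n} \times Z_{\sigma})$ to $E^{(n-1)}$ along $\coprod_{\sigma} G \times_{G_{\sigma}} S_{\sigma}$, via $S_{\sigma} \to S^{n-1}\times Z_{\sigma} \hookrightarrow D^{n}\times Z_{\sigma}$ on the new blocks and $S_{\sigma}\to E^{(n-1)}$ on the old part (replacing the latter by a cofibration in the standard way); the map to $X^{(n)}$ is $p^{(n-1)}$ together with the projections $D^{n}\times Z_{\sigma}\to D^{n}$. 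Setting $E := \bigcup_{n}E^{(n)}$, this has finite isotropy by construction, and, crucially because the pieces glued in are genuine products rather than cones,
\[
\dim E \;\le\; \max_{\sigma}\bigl(\dim Z_{\sigma} + \dim\sigma\bigr) \;=\; \max_{\sigma}\bigl(\underline{\gd}(G_{\sigma}) + \dim\sigma\bigr).
\]

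It remains to check that $E$ is a model for $\underline{E}G$, i.e. that $E^{H}$ is contractible for every finite $H \le G$ and empty otherwise; emptiness for infinite $H$ is immediate since $E$ has finite isotropy. For $H$ finite, $p$ restricts to $E^{H}\to X^{H}$; the base is contractible since $H \in \calg$, its cells are (images of) those $\sigma$ with $H\le G_{\sigma}$, and over each such $\sigma$ the preimage in $E$ looks like $G\times_{G_{\sigma}}Z_{\sigma}$, whose $H$-fixed points are $(Z_{\sigma})^{H}\simeq *$ because $H$ is a finite subgroup of $G_{\sigma}$ and $Z_{\sigma} = \underline{E}G_{\sigma}$; a standard argument assembling a space over a contractible CW-base from contractible pieces by iterated pushouts along cofibrations then gives that $E^{H}$ is contractible. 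This proves the general fact, and with it the first inequality. For the final assertion, assume $Q$ is torsion free; then its only finite subgroup is trivial, so $\underline{E}Q = EQ$, and choosing $Y$ a model of minimal dimension $\underline{\gd}(Q)$ makes $Y$ a free $Q$-CW-complex. Consequently every cell stabiliser of $X = f^{*}Y$ equals $K = \ker f$, the cells of $X$ have dimension at most $\underline{\gd}(Q)$, and, taking each $Z_{\sigma}$ to be a model for $\underline{E}K$ of dimension $\underline{\gd}(K)$, the maximum above is $\underline{\gd}(K) + \underline{\gd}(Q)$. The step I expect to require the most care is the inductive construction of $E^{(n)}$: arranging the gluing so the added pieces are honest products $D^{n}\times Z_{\sigma}$ (rather than mapping cones, which would cost a $+1$ in dimension) while still being able to carry out the fixed-point computation; this is exactly the technical content implicit in \cite[Proof of Theorem~3.1]{Lu00} and \cite[Theorem~2.3]{MPSS20}.
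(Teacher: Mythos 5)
Your overall route is the same one the paper has in mind (the paper gives no argument of its own, only the references to the proof of Theorem~3.1 in [Lu00] and Theorem~2.3 in [MPSS20]): pull $Y$ back along $f$ to the $G$-CW-complex $X=f^{*}Y$ with isotropy $G_\sigma=f^{-1}(Q_\sigma)$ and $X^{H}=Y^{f(H)}$, then build a model for $\underline{E}G$ over $X$ skeleton by skeleton out of blocks $G\times_{G_\sigma}(D^{n}\times Z_\sigma)$. The gap is in how you glue the blocks. In the correct construction the gluing corner is the subcomplex $S^{n-1}\times Z_\sigma\subseteq D^{n}\times Z_\sigma$, attached to $E^{(n-1)}$ along a $G_\sigma$-map $S^{n-1}\times Z_\sigma\to E^{(n-1)}$ that lifts the attaching map through $p^{(n-1)}$; since that corner is an honest subcomplex of the new block, the pushout is a $G$-CW-complex whose new cells have dimension at most $n+\dim Z_\sigma$. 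You instead glue along the strict pullback $S_\sigma=S^{n-1}\times_{X^{(n-1)}}E^{(n-1)}$, mapped into the block via the universal-property map $S_\sigma\to Z_\sigma$, and this is where the argument breaks. Taken literally, the strict pushout identifies every point of a fibre of $S_\sigma$ over a point of $S^{n-1}$ with a single point of $D^{n}\times Z_\sigma$, i.e.\ it collapses the classifying-space fibres over the attaching sphere; the result is not a model for $\underline{E}G$ (it even reacquires the infinite isotropy groups $G_\sigma$). With your proposed repair, replacing $S_\sigma\to E^{(n-1)}$ by its mapping cylinder, the homotopy type can be salvaged but the dimension bound, which is the entire content of the proposition, is lost: the cylinder contributes cells of dimension $\dim S_\sigma+1$, and $\dim S_\sigma$ is governed by the fibres of $p^{(n-1)}$ over the \emph{faces} of $\sigma$ (spaces of the form $Z_\tau$ together with previously inserted cylinders), not by $\dim Z_\sigma$. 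Already for an extension $1\to K\to G\to Q\to 1$ with $Q$ torsion free and $Y$ of dimension $e\ge 2$, the cylinders stack up from stage to stage and the construction outputs a complex of dimension strictly larger than $\underline{\gd}(K)+e$; so the displayed inequality $\dim E\le\max_\sigma(\dim Z_\sigma+\dim\sigma)$ is asserted but not true for the space you actually build. (A further, smaller, point: the strict pullback $S_\sigma$ need not carry a $G_\sigma$-CW structure, so invoking the universal property of $Z_\sigma$ for it requires a CW-approximation you do not provide.)

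The missing idea is to strengthen the induction so that the gluing maps can be produced in the right direction and no pullback or cylinder is ever needed: construct $p^{(n)}\colon E^{(n)}\to X^{(n)}$ together with the statement that $(E^{(n)})^{H}\to (X^{(n)})^{H}$ is a weak homotopy equivalence for every finite $H\le G$. Given this, the map $S^{n-1}\times Z_\sigma\to X^{(n-1)}$ (project to $S^{n-1}$, then apply the attaching map, which lands in the $G_\sigma$-fixed points) lifts through $p^{(n-1)}$ up to $G_\sigma$-homotopy by the equivariant Whitehead theorem for the family $\FIN$, because its source is a $G_\sigma$-CW-complex with finite isotropy; glue the block along a cellular approximation of this lift. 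On $H$-fixed points the new blocks contribute $D^{n}\times Z_\sigma^{H'}\simeq D^{n}$ attached along $S^{n-1}\times Z_\sigma^{H'}\simeq S^{n-1}$, matching the cells of $(X^{(n)})^{H}$, so the gluing lemma propagates the inductive statement; in the colimit $E^{H}$ is weakly contractible for finite $H$ (as $X^{H}=Y^{f(H)}$ is contractible) and empty for infinite $H$, and the dimension bound holds on the nose. This single inductive statement both replaces your pullback-plus-cylinder step and makes precise the final ``assemble contractible pieces over a contractible base'' argument, which as written is only a sketch. The reduction to $X=f^{*}Y$ and the ``in particular'' deduction for torsion-free $Q$ are fine once the main inequality is established this way.
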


For a subgroup $H \subseteq \Out(F_N)$, we say that $H$ satisfies \emph{Property~$(CF)$} if, for every $\psi \in H$, every $\psi$-periodic conjugacy class of elements of $F_N$ is fixed. By Theorem~\ref{Theo:freefactorconjclassfixed}~$(2)$, any subgroup of $\IA_N(3)$ satisfies Property~$(CF)$. Groups with Property~$(CF)$ satisfies the following result.

\begin{lemma}\label{lem:PropertyCF}
    Let $H \subseteq \Out(F_N)$ be a subgroup satisfying Property~$(CF)$ and let $A \subseteq F_N$ be a malnormal subgroup preserved by $H$. The image of $p\colon H \to \Out(A)$ is torsion free and satisfies Property~$(CF)$.
\end{lemma}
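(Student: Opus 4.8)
The plan is to establish both conclusions from a single computation that transports conjugacy in $A$ to conjugacy in $F_N$ and back. We may assume $A$ is nontrivial, as otherwise $\Out(A)$ is trivial and there is nothing to prove. Since $A$ is malnormal and nontrivial it is self-normalising in $F_N$, so for any $\psi\in H$ and any representative $\Psi\in\psi$ with $\Psi(A)=A$ the class $\bar\psi:=[\Psi|_A]\in\Out(A)$ is well defined and independent of $\Psi$; this is the map $p$. The single fact about malnormality we will exploit repeatedly is that two nontrivial elements of $A$ are conjugate in $F_N$ if and only if they are conjugate in $A$ (the forward implication being malnormality, the reverse being automatic).

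The key step will be the following claim, from which Property~$(CF)$ for $p(H)$ is immediate: for $\psi\in H$ and $a\in A$, if the $A$-conjugacy class $[a]_A$ is $\bar\psi$-periodic then it is $\bar\psi$-fixed. To prove it, pick $n\ge 1$ with $\bar\psi^{\,n}([a]_A)=[a]_A$; choosing $\Psi\in\psi$ with $\Psi(A)=A$ this says $\Psi^{n}(a)\sim_A a$, hence $\Psi^{n}(a)\sim_{F_N}a$, so the $F_N$-conjugacy class of $a$ is $\psi^{n}$-periodic and therefore $\psi$-periodic. As $H$ satisfies Property~$(CF)$, that class is $\psi$-fixed, i.e.\ $\Psi(a)\sim_{F_N}a$; since $a$ and $\Psi(a)$ both lie in $A$, malnormality upgrades this to $\Psi(a)\sim_A a$, which is exactly $\bar\psi([a]_A)=[a]_A$.

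For the torsion-free statement, suppose $\bar\psi\in p(H)$ satisfies $\bar\psi^{\,k}=1$. Then every $A$-conjugacy class is $\bar\psi$-periodic, hence $\bar\psi$-fixed by the claim; equivalently, a representative in $\Aut(A)$ of $\bar\psi$ sends each element of $A$ to a conjugate of itself. Since $A$, being a subgroup of the free group $F_N$, is free, a class-preserving (``pointwise inner'') automorphism of $A$ is inner, so $\bar\psi=1$ and $p(H)$ is torsion free.

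The only external ingredient is the classical fact that a class-preserving automorphism of a free group is inner; the rest is bookkeeping. I expect the one point needing care is keeping the relations $\sim_A$ and $\sim_{F_N}$ distinct and applying malnormality in the correct direction each time: passing down ($\sim_A\Rightarrow\sim_{F_N}$) is free, while passing up ($\sim_{F_N}\Rightarrow\sim_A$ for elements of $A$) is precisely where malnormality is used, and it is the step that makes the whole argument go through.
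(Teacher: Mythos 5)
Your proof is correct and follows essentially the same route as the paper's: transport conjugacy from $F_N$ to $A$ via malnormality, invoke Property $(CF)$ for $H$, and finish with the classical fact that a class-preserving automorphism of a free group is inner. The paper's own proof is terser (it leaves the malnormality transfer and the class-preserving-is-inner step unstated, and declares the $(CF)$-transfer ``immediate''), so your version is the same argument written out in full.
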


\begin{proof}
    Let $\psi \in H$ and let $k \in \NN^*$ be such that $p(\psi)^k=\mathrm{id}$. Then every conjugacy class of elements of $A$ is fixed by $\psi$ by Property~$(CF)$. Thus, $\psi$ has a representative which acts as the identity on $A$ and $\psi=\mathrm{id}$. The fact that $p(H)$ satisfies Property~$(CF)$ is immediate.
\end{proof}

Let $\calc=\{[x_1],\ldots,[x_k]\}$ be a finite set of conjugacy classes of elements of $F_N$. We will denote by $\calf_\calc$ the minimal free factor system of $F_N$ such that, for every $i\in \{1,\ldots,k\}$, there exists $[A]\in \calf_\calc$ with $x_i \in A$. 

\begin{corollary}\label{coro:boundvcdMccool}
    Let $\calc$ be a finite set of conjugacy classes of elements of $F_N$. Let $H$ be a subgroup of $\Out(F_N,\calc)$ which satisfies Property~$(CF)$. The geometric dimension of $H$ is bounded by 
    \[\underline{\gd}(H) \leq 2N-2-|\calf_\calc|.\]
\end{corollary}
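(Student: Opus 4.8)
The plan is to deduce \Cref{coro:boundvcdMccool} from \Cref{Thm:dimensionrelative}, \Cref{prop:haefliger}, and \Cref{lem:PropertyCF} by analysing the action of $H$ on the free factor system $\calf_\calc$. First I would observe that since every element of $\calc$ is contained in one of the free factors of $\calf_\calc$ and $H$ preserves every conjugacy class in $\calc$ (Property~$(CF)$ applied to the identity permutation, or rather: each $[x_i]$ is $H$-periodic hence $H$-fixed), the free factor system $\calf_\calc$ is $H$-invariant; in fact $H$ fixes each $[A] \in \calf_\calc$. Moreover, by minimality of $\calf_\calc$, for each $[A]\in\calf_\calc$ the subgroup $A$ is generated (up to conjugacy and finite index, but really exactly up to the minimality construction) by those $x_i$ lying in it, so $H$ acts on $A$ fixing the conjugacy classes of a generating set; combined with Property~$(CF)$ this should force $H$ to lie in $\Out(F_N,\calc^{(t)})$-type behaviour relative to $\calf_\calc$, i.e. the image of $H$ in $\prod_{[A]\in\calf_\calc}\Out(A)$ is trivial after passing to the subgroup fixing each generator elementwise. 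The cleanest route is: $H \subseteq \Out(F_N,\calf_\calc^{(t)})$ up to the subtlety that $H$ need only fix conjugacy classes, not elements; but \Cref{lem:PropertyCF} tells us the image in each $\Out(A)$ is torsion-free, and since that image fixes every conjugacy class of the generators $x_i$ of $A$ it must be trivial, so indeed $H$ maps into the kernel of $\Out(F_N,\calf_\calc)\to\prod\Out(A)$ — which is exactly $\Out(F_N,\calf_\calc^{(t)})$ composed with the inner-by-factor part.

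Granting that $H \subseteq \Out(F_N,\calf_\calc^{(t)})$ (or a subgroup thereof), I would then invoke \Cref{Thm:dimensionrelative}, which gives
\[
\underline{\gd}(\Out(F_N,\calf_\calc^{(t)})) = 2N - 2\sum_{[A]\in\calf_\calc^{\ge 2}}(\rank(A)-1) - 2 - |\calf_\calc^\ZZ|.
\]
Since each $[A]\in\calf_\calc^{\ge 2}$ has $\rank(A)\ge 2$, the term $2\sum_{[A]\in\calf_\calc^{\ge 2}}(\rank(A)-1) \ge 2|\calf_\calc^{\ge 2}|$, so
\[
\underline{\gd}(\Out(F_N,\calf_\calc^{(t)})) \le 2N - 2|\calf_\calc^{\ge 2}| - 2 - |\calf_\calc^\ZZ| \le 2N - 2 - |\calf_\calc^{\ge 2}| - |\calf_\calc^\ZZ| = 2N-2-|\calf_\calc|.
\]
Because $\underline{\gd}$ is monotone under passing to subgroups (a model for $\underline{E}G$ restricts to one for $\underline{E}H$ when $H\le G$, of the same dimension), the bound $\underline{\gd}(H)\le 2N-2-|\calf_\calc|$ follows. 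If $H$ is only contained in $\Out(F_N,\calf_\calc)$ rather than in $\Out(F_N,\calf_\calc^{(t)})$, I would instead use \Cref{prop:haefliger}: the quotient $\Out(F_N,\calf_\calc)\to\prod_{[A]\in\calf_\calc}\Out(A)$ restricted to $H$ has torsion-free image (by \Cref{lem:PropertyCF}) that is in fact trivial by the conjugacy-class-fixing argument above, so $H$ equals its own kernel part, landing us back in the $\calf_\calc^{(t)}$ situation — or, more robustly, apply \Cref{prop:haefliger} with $Q$ the (torsion-free) image and $K$ the kernel, bounding $\underline{\gd}(H)\le\underline{\gd}(Q)+\underline{\gd}(K)$ and controlling each piece by induction on $N$.

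The main obstacle I anticipate is the careful bookkeeping of the distinction between $H$ \emph{fixing conjugacy classes} (which is all Property~$(CF)$ and membership in $\Out(F_N,\calc)$ give) versus $H$ \emph{fixing elements} (which is what $\Out(F_N,\calf_\calc^{(t)})$ encodes). The resolution hinges on \Cref{lem:PropertyCF}: the image of $H$ in each $\Out(A)$ is torsion-free, and an automorphism of a free group fixing the conjugacy classes of a generating set, while lying in a torsion-free group, must be trivial — this is where one needs that the $x_i$ inside $A$ actually generate $A$ up to finite index, which is a consequence of minimality of $\calf_\calc$, though one should double-check that minimality gives generation and not merely that no coarser system works. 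A secondary subtlety is that $\calf_\calc$ could be improper or sporadic (e.g. if $\calc$ forces $\calf_\calc=\{[F_N]\}$, but that is not a free factor system in the sense used here since it requires a complementary free factor) or even empty; the statement presumably intends $\calc$ nonempty and $\calf_\calc$ a genuine proper free factor system, and the edge cases where $|\calf_\calc|$ is small should be handled by noting $\underline{\gd}(\Out(F_N))$ or $\underline{\gd}(\IA_N(3))$ is already $\le 2N-3 < 2N-2$ by \cite{CV86} and standard facts.
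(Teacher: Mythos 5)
There is a genuine gap at the heart of your main route: the claim that the image of $H$ in each $\Out(A)$, $[A]\in\calf_\calc$, is trivial, so that $H\subseteq\Out(F_N,\calf_\calc^{(t)})$. This fails for two separate reasons. First, minimality of $\calf_\calc$ does \emph{not} mean the $x_i$ lying in $A$ generate $A$ (or a finite-index subgroup); it only means no proper free factor system carries them. For example, with $\calc=\{[a^2b^2]\}$ one gets $A=\langle a,b\rangle$, yet $a^2b^2$ generates an infinite-index subgroup and the stabiliser of $[a^2b^2]$ in $\Out(A)$ is infinite (it contains a surface mapping class group), so a subgroup $H\leq\IA_N(3)$ preserving $[a^2b^2]$ can have infinite image in $\Out(A)$. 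Second, even if the $x_i$ did form a basis of $A$, an outer automorphism fixing the conjugacy classes of a basis need not be inner: $a\mapsto a$, $b\mapsto (ba)b(ba)^{-1}$ is non-inner and of infinite order in $\Out(F_2)$, so torsion-freeness of the image (which is all \Cref{lem:PropertyCF} gives) cannot force triviality. Consequently your computation, which bounds $\underline{\gd}(H)$ by $\underline{\gd}(\Out(F_N,\calf_\calc^{(t)}))$ alone, does not apply to a general $H$ as in the statement.

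The paper's argument never needs triviality of the image. Property~$(CF)$ gives that $H$ fixes each $[A]\in\calf_\calc$, hence there is a homomorphism $H\to\prod_{[A]\in\calf_\calc^{\ge 2}}\Out(A)$ whose kernel lies in $\Out(F_N,\calf_\calc^{(t)})$; \Cref{lem:PropertyCF} is used only to see that the image is torsion free, so that the last clause of \Cref{prop:haefliger} yields
\[
\underline{\gd}(H)\;\leq\;\underline{\gd}\bigl(\Out(F_N,\calf_\calc^{(t)})\bigr)+\sum_{[A]\in\calf_\calc^{\ge 2}}\underline{\gd}(\Out(A)),
\]
and then one feeds in $\underline{\gd}(\Out(A))=2\rank(A)-3$ from \cite{CV86} together with \Cref{Thm:dimensionrelative}; the two contributions combine exactly to $2N-2-|\calf_\calc^{\ge2}|-|\calf_\calc^\ZZ|=2N-2-|\calf_\calc|$. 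Your final fallback sentence ("apply \Cref{prop:haefliger} with $Q$ the image and $K$ the kernel, controlling each piece by induction on $N$") points in this direction, but it is not carried out: you neither bound the image contribution by $\sum(2\rank(A)-3)$ nor identify the kernel inside $\Out(F_N,\calf_\calc^{(t)})$ and do the resulting arithmetic, and no induction on $N$ is needed. Note also that the image contribution is essential: the bound you would get from the (false) inclusion $H\subseteq\Out(F_N,\calf_\calc^{(t)})$ is strictly stronger than the stated one, which is a warning sign that the reduction loses too much.
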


\begin{proof}
    Since $H$ satisfies Property~$(CF)$, it fixes $\calc$ elementwise. Hence $H$ fixes $\calf_\calc$ elementwise. Thus, we have a natural homomorphism 
    \[1 \to K \to H \to \prod_{[A]\in \calf_\calc^{\ge 2}} \Out(A)\] whose kernel $K$ is a subgroup of $\Out(F_N,\calf_\calc^{(t)})$. By \Cref{lem:PropertyCF}, the image of $H$ is torsion free. By~\Cref{prop:haefliger}, the geometric dimension of $H$ is bounded by 
    \[
    \underline{\gd}(H) \leq \underline{\gd}(\Out(F_N,\calf_\calc^{(t)}))+\sum_{[A]\in \calf_\calc^{\ge 2}} \underline{\gd}(\Out(A)).
    \] 
    By~\cite{CV86}, for every $[A]\in \calf_\calc^{\ge 2}$, we have $\underline{\gd}(\Out(A))=2\rank(A)-3$. Combining this with \Cref{Thm:dimensionrelative}, we obtain
    \[
    \underline{\gd}(H) \leq 2N+\sum_{[A]\in \calf^{\ge 2}}(2\cdot\rank(A)-2\cdot\rank(A)+2-3)-2-|\calf_\calc^\ZZ|.
    \] Therefore, we have \begin{align*}\underline{\gd}(H) &\leq 2N-2-|\calf_\calc^{\ge 2}|-|\calf_\calc^\ZZ|\\
    &=2N-2-|\calf_\calc|.\qedhere\end{align*}
\end{proof}

We need another theorem due to Shenitzer~\cite{shenitzer1955decomposition} and Swarup~\cite{swarup1986decompositions} (see also the work of Stallings~\cite{stallings1991foldings} and Bestvina-Feighn~\cite[Lemma~4.1]{bestvina1994outer}) in order to understand the proper geometric dimension of the Weyl group of a Dehn twist.

\begin{lemma}\cite{shenitzer1955decomposition,swarup1986decompositions}\label{Lem:unfoldingedgecyclicsplitting}
    Let $T$ be an $F_N$-tree whose edge stabilisers are infinite cyclic. There exists an oriented edge $e_+ \in E^+(F_N \backslash T)$ with origin $v \in V(F_N \backslash T)$ which satisfies:
    \begin{enumerate}
        \item the group $G_v$ splits as $G_v=A \ast G_{e_+}$ for some nontrivial subgroup $A \subseteq G_v$;
        \item for every oriented edge $e_+' \in E^+(F_N \backslash T)$ with origin $v$ and distinct from $e_+$, some conjugate of $G_{e_+'}$ is contained in $A$. 
    \end{enumerate}
\end{lemma}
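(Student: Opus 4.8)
My plan is to recast the statement in the language of graphs of groups and reduce it, by collapsing edges, to the one-edge splittings of $F_N$ handled by the classical decomposition theory of free groups.

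First I would pass to the minimal $F_N$-invariant subtree, so that the quotient graph $\Gamma=F_N\backslash T$ carries a reduced graph-of-groups structure $\mathcal{G}$ with $\pi_1(\mathcal{G})\cong F_N$, every vertex group $G_v$ free (as a subgroup of $F_N$) and every edge group infinite cyclic; if $\Gamma$ has no edges the conclusion is vacuous, so we may assume $T$ is nontrivial. The assertion to be proved is that $\mathcal{G}$ admits an \emph{unfoldable} oriented edge $e_+$ with origin $v$: one for which $G_v=A\ast G_{e_+}$ with $A$ nontrivial and every other edge incident to $v$ has edge group conjugate into $A$.

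The heart of the argument is the single-edge case. Collapsing every $F_N$-orbit of edges of $T$ except that of a chosen edge $e$ produces a one-edge splitting: $F_N=P\ast_{G_e}Q$ when $e$ separates $\Gamma$, or an HNN extension $F_N=P\ast_{G_e}$ otherwise, where $G_e\cong\ZZ$, the groups $P$ and $Q$ are the fundamental groups of the collapsed complementary pieces, and $G_{o(e)}\leq P$. I then invoke the decomposition theorems of Shenitzer and Swarup \cite{shenitzer1955decomposition,swarup1986decompositions} (see also Stallings and Bestvina--Feighn): a free group that splits over an infinite cyclic subgroup does so \emph{unfoldably}, so $G_e$ is a free factor of $P$ or of $Q$, and in the HNN case one may orient the edge so that this side also contains a conjugate of the image of the other boundary monomorphism. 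Say $G_e$ is a free factor of $P$. Since $G_{o(e)}$ is a subgroup of $P$ containing $G_e$, the Kurosh subgroup theorem forces $G_e$ to appear as a free factor of $G_{o(e)}$, i.e. $G_{o(e)}=A\ast G_e$ for some $A$, necessarily nontrivial since minimality of $T$ rules out $G_{o(e)}=G_e$. This gives conclusion~$(1)$ of the lemma, with $v=o(e)$ and $e_+$ the edge $e$ oriented away from $v$.

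What remains, and what I expect to be the main obstacle, is conclusion~$(2)$: that $A$ absorbs the remaining edge groups incident to $v$. Each such edge group is cyclic and lies in $G_{o(e)}=A\ast G_e$, so by Kurosh it is conjugate into $A$ or into $G_e$; only the second alternative is troublesome, and it forces that edge group to be commensurable with $G_e$. To eliminate it I would choose the collapsed edge $e$ with more care---for instance so that $G_e$ is maximal up to conjugacy among all edge groups of $\mathcal{G}$---or run an induction on $|E\Gamma|$ that collapses an edge \emph{not} incident to the eventual unfoldable vertex, while keeping track of the (harmless) valence-one vertices carrying infinite cyclic vertex groups, at which the unfoldability condition cannot be realised directly. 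One could instead argue topologically, realising $\mathcal{G}$ as a graph of spaces with graph vertex-spaces, circle edge-spaces, and immersed gluing loops, and running an induction that exploits the freeness of $F_N$. This careful bookkeeping is exactly what Shenitzer and Swarup carry out, and in the paper I would cite their work for the full argument.
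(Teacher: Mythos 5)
The paper offers no argument of its own here: the lemma is imported wholesale from Shenitzer and Swarup \cite{shenitzer1955decomposition,swarup1986decompositions} (with Stallings and \cite[Lemma~4.1]{bestvina1994outer} as further references), so there is no internal proof to measure yours against. Judged as a proof, your proposal has a genuine gap, and it sits exactly where you flag it: conclusion~$(2)$. Collapsing all edge orbits except that of a single edge $e$ and invoking the one-edge theorems of Shenitzer and Swarup gives, via Kurosh, only that $G_e$ is a free factor of the vertex group at one of its endpoints; it gives no control over the \emph{other} edge groups incident to that vertex, and your own Kurosh dichotomy (conjugate into $A$ or into $G_{e_+}$) leaves the bad alternative standing. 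The remedies you list --- choosing $e$ with a maximal edge group, inducting on the number of edges, or a topological graph-of-spaces argument --- are named but not executed, and the final sentence simply defers to the references. Since the multi-edge statement with condition~$(2)$ is the entire content of the lemma (it is what drives the count $|\calf|\geq |E|+1$ in \Cref{Lem:Bounddimensionimageweylgroup}), and since it does not follow formally from the one-edge case, what you have actually established is only condition~$(1)$.

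Two smaller points. First, your nontriviality claim for $A$ is not justified as stated: minimality of $T$ does not make the graph of groups reduced, so an edge with $G_{o(e)}=G_e$ can perfectly well occur in a minimal tree; one escapes by choosing a different edge or orientation, but that again requires the global bookkeeping you postpone. Second, in the HNN case the assertion that one can orient the edge so that the chosen side also swallows a conjugate of the other associated subgroup is itself part of condition~$(2)$ and needs proof, not just Swarup's one-edge statement. To be fair, the paper resolves all of this by citation, so ``cite Shenitzer--Swarup'' is a legitimate endpoint in context --- but then the text should be presented as a reduction-plus-citation rather than as a proof, because the reduction you carry out does not reach the statement being cited.
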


We can now prove the key lemma in order to bound the geometric dimension of the Weyl group of a Dehn twist. Recall the definition of $T_\cala$ for a Dehn twist $\phi \in \IA_N(3)$. If $v\in V(F_N \backslash T_\cala)$, Recall the definition of $\IA_v(3)$ in \Cref{Thm:sesDehntwist}. 

\begin{lemma}\label{Lem:Bounddimensionimageweylgroup}
    Let $N \geq 2$ and let $\phi \in \IA_N(3)$ be a Dehn twist. Let $T_\cala$ be the associated JSJ tree. Then 
    \[\sum_{v \in V(F_N \backslash T_\cala)} \underline{\gd}(\IA_v(3)) \leq 2N-3-|E(F_N \backslash T_\cala)|.\]
\end{lemma}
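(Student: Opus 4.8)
The plan is to feed the short exact sequence of \Cref{Thm:sesDehntwist} into the McCool‑group dimension estimate of \Cref{coro:boundvcdMccool}, thereby reducing the whole statement to a purely combinatorial inequality about the quotient graph of groups, and then to prove that inequality by induction using the unfolding lemma \Cref{Lem:unfoldingedgecyclicsplitting}.

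Write $\Gamma = F_N\backslash T_\cala$, and for $v\in V(\Gamma)$ set $n_v=\rank(G_v)$ and let $\calc_v$ be the finite set of conjugacy classes in $G_v$ of the stabilisers of the edges of $\Gamma$ incident to $v$, so that $\calc_v$ records $\mathrm{Inc}_v$. Since $\IA_v(3)$ is a finite index subgroup of $\Out(G_v,\mathrm{Inc}_v)\subseteq\Out(G_v,\calc_v)$ satisfying Property $(CF)$ (it is obtained by restricting a subgroup of $\IA_N(3)$ to $G_v$; cf. \Cref{Theo:freefactorconjclassfixed}~$(2)$ and \Cref{lem:PropertyCF}), \Cref{coro:boundvcdMccool} gives $\underline{\gd}(\IA_v(3))\leq 2n_v-2-|\calf_{\calc_v}|$, where $\calf_{\calc_v}$ is the free factor support of $\calc_v$ in $G_v$. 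Combining this with the standard Euler characteristic identity $\sum_{v\in V(\Gamma)} n_v = N-1+|V(\Gamma)|$ for a graph of groups decomposition of $F_N$ with infinite cyclic edge groups, summing over $v$ yields
\[\sum_{v\in V(\Gamma)}\underline{\gd}(\IA_v(3))\leq 2N-2-\sum_{v\in V(\Gamma)}|\calf_{\calc_v}|.\]
Hence it suffices to prove the combinatorial inequality $\sum_{v\in V(\Gamma)}|\calf_{\calc_v}|\geq |E(\Gamma)|+1$.

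I would prove this inequality by induction on $|E(\Gamma)|$, stated for an arbitrary graph of groups decomposition of $F_N$ with infinite cyclic edge groups and at least one edge (so that the induction may pass through non‑JSJ trees). The engine is \Cref{Lem:unfoldingedgecyclicsplitting}: it produces an edge $e$ with an endpoint $v$ such that $G_v=A\ast G_e$ — so $[G_e]$ is a genuine free factor of $G_v$ — and every other edge at $v$ has stabiliser conjugate into $A$; consequently $\calf_{\calc_v}=\{[G_e]\}\sqcup\calf_A$ with $\calf_A$ the free factor support in $A$ of the other edges at $v$, so $|\calf_{\calc_v}|=1+|\calf_A|$. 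In the base case $|E(\Gamma)|=1$ one checks a segment directly (each endpoint carries a single edge group, whose support is a single free factor, giving sum $2$) and a loop via the lemma (the two ends of the loop are carried by the two distinct free factors $[G_e]$ and something inside $A$, giving $2$). For the inductive step with $|E(\Gamma)|\geq 2$: if $e$ is not a loop, collapse it, merging its endpoints $v,w$ into a vertex $u$ with $G_u=G_v\ast_{G_e}G_w=A\ast G_w$; if $e$ is a loop at $v$, delete it and replace the free factor $G_e$ of $G_v$ by a free $\ZZ$. In both cases the new decomposition $\Gamma'$ still has infinite cyclic edge groups, $|E(\Gamma')|=|E(\Gamma)|-1\geq 1$, the induction hypothesis applies, and one checks that the passage $\Gamma\rightsquigarrow\Gamma'$ costs at most $1$ in $\sum_v|\calf_{\calc_v}|$: the loss at the affected vertices is $|\calf_{\calc_v}|+|\calf_{\calc_w}|-|\calf_{\calc_u}|$ (or the analogue in the loop case), and using $|\calf_{\calc_v}|=1+|\calf_A|$ together with additivity of free factor supports across $G_u=A\ast G_w$ this is at least $1$. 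Plugging the resulting inequality into the displayed bound finishes the proof.

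The step I expect to be the main obstacle is precisely this last verification: controlling how free factor supports behave under the collapse of the unfolded edge. The delicate point is that imposing one more carrying constraint at the merged vertex $u$ can a priori decrease the cardinality of a free factor support (two previously complementary free factors may be forced to merge), so one must use exactly the structure guaranteed by \Cref{Lem:unfoldingedgecyclicsplitting} — that the unfolded edge group $G_e$ is a whole free factor of $G_v$ sitting complementarily to all the other incident edge groups at $v$ — to ensure no such collapse of the support occurs at $u$ and that the extra $+1$ contributed by $[G_e]$ is never lost. The loop case, where one cannot literally collapse the edge and must instead argue directly that the two ends remain carried by distinct free factors, is the other place that requires care.
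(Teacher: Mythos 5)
Your proposal follows the paper's proof closely: both reduce, via \Cref{Thm:sesDehntwist}, \Cref{coro:boundvcdMccool}, and the identity $\sum_v(\rank(G_v)-1)=N-1$, to the combinatorial claim $\sum_v|\calf_{\mathrm{Inc}_v}|_{G_v}|\geq|E(F_N\backslash T_\cala)|+1$, and both prove that claim by induction on the number of edges using \Cref{Lem:unfoldingedgecyclicsplitting} and collapsing the unfolded edge (your ``delete and replace $G_e$ by a fresh $\ZZ$'' in the loop case is exactly what the equivariant collapse produces). The point you flag as delicate — controlling the support of the merged incidence set using that $G_{e_+}$ is a free factor of $G_v$ complementary to all other incident edge groups at $v$ — is precisely the crux of the paper's inductive step too.
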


\begin{proof}
    In order to simplify the notations, let $E=E(F_N \backslash T_\cala)$ and let $V=V(F_N \backslash T_\cala)$. For every $v\in V$, let $\calf_v=\calf_{\mathrm{Inc}_v}|_{G_v}$ and let $\calf=\coprod_{v\in V} \calf_v$. Note that for every $v\in V$, the free factor system $\calf_v$ is a free factor system of $G_v$ and not of $F_N$.

    We claim that $|\calf|\geq |E|+1$. The proof is by induction on $|E|\geq 1$. 
    
    Suppose that $|E|=1$ and let $e$ be the (unoriented) edge of $F_N \backslash T_\cala$. If $e$ has two distinct endpoints $v$ and $w$, then, since the stabiliser of $e$ is nontrivial, we have $|\calf_v|,|\calf_w|=1$ and $|\calf|\geq 2$.

    Suppose now that $e$ is a loop based at $v$. By \Cref{Lem:unfoldingedgecyclicsplitting}, one of the orientation of $e$, say $e_+$, is such that the group $G_v$ splits as $G_v=A \ast G_{e_+}$ where $A$ is a nontrivial subgroup of $G_v$. Since $e$ is a loop, we have $\mathrm{Inc}_v=\{[G_{e_+}],[G_{e_-}]\}=\{[G_{e_+}],[tG_{e_+}t^{-1}]\}$, where $t\in F_N-G_v$ and $tG_et^{-1} \subseteq A$. Since $G_v=A \ast G_{e_+}$, we have $\calf=\calf_v=\{[A'],[G_{e_+}]\}$, where $A'$ is the smallest free factor of $A$ containing $tG_et^{-1}$. Thus, we have $|\calf| \geq 2$. This proves the base case.

    Suppose that $|E|\geq 2$, let $e \in E$, let $v \in V$ be adjacent to $e$.  We also assume that, once chosen the orientation $e_+$ of $e$ such that $v$ is the origin of $e_+$, the oriented edge $e_+$ is the one given by \Cref{Lem:unfoldingedgecyclicsplitting}. Let $w$ be the other endpoint of $e_+$ (which is possibly equal to $v$). Let $T'$ be the tree obtained from $T_\cala$ by collapsing the orbit of the edge $e_+$. Let $E'=E(F_N \backslash T')$ and let $V'=V(F_N \backslash T')$. For every $x\in V'$, let $\calf_x'=\calf_{\mathrm{Inc}_x}|_{G_x}$ and let $\calf'=\coprod_{x\in V'} \calf_x$. For simplicity, if $v=w$, we set $\mathrm{Inc}_w=\varnothing$. By induction, we have $|\calf'|\geq |E'|+1=|E|$. 
    
    For every $x\in V$, let $\overline{x}$ be the image of $x$ in $F_N \backslash T'$. Note that, for every $x \in V-\{v,w\}$, we have $\calf_{\overline{x}}'=\calf_x$. 
    
     Let $A \subseteq G_v$ be such that $G_v=A\ast G_{e_+}$. By Lemma~\ref{Lem:unfoldingedgecyclicsplitting}, for every oriented edge $e_+'\in E^+$ with origin $v$ and distinct from $e_+$, a conjugate of $G_{e_+'}$ is contained in $A$. Thus, we have $\mathrm{Inc}_{\overline{v}}\subseteq (\mathrm{Inc}_v-\{[G_{e_+}]\}) \cup \mathrm{Inc}_w$. Hence we see that \[|\calf_{\overline{v}}|\leq |\calf_{\mathrm{Inc}_v-\{[G_{e_+}]\}}|_{G_v}|+|\calf_{w}|.\]

    Since $\calf_{\mathrm{Inc}_v-\{[G_{e_+}]\}}|_{G_v} \leq \{[A]\}$ by~\Cref{Lem:unfoldingedgecyclicsplitting}, we also have \[\calf_v=\calf_{\mathrm{Inc}_v-\{[G_{e_+}]\}}|_{G_v} \coprod \{[G_{e_+}]\},\] so that $|\calf_v|=|\calf_{\mathrm{Inc}_v-\{[G_{e_+}]\}}|_{G_v}|+1$. 
    
    Combining all the above remarks, we see that 
    \begin{align*}
    |\calf| \geq & \sum_{x \in V-\{v,w\}} |\calf_{\overline{x}}'|+|\calf_v|+|\calf_w| \\
    \geq & \sum_{x \in V-\{v,w\}} |\calf_{\overline{x}}'|+|\calf_{w}|+|\calf_{\mathrm{Inc}_v-\{[G_{e_+}]\}}|_{G_v}|+1 \\
    \geq & \; |\calf'|+1 \geq |E|+1.
    \end{align*}
This proves the claim.

By for instance~\cite[Fact~7.3]{lustig1999conjugacy}, we have \[N-1=\sum_{v\in V} (\rank(G_v)-1).\] 

Note that $C(\phi)$ satisfies Property~$(CF)$ by \Cref{Theo:freefactorconjclassfixed}. By \Cref{lem:PropertyCF}, for every $v\in V$, the group $\IA_v(3)$ also satisfies Property~$(CF)$. By \Cref{coro:boundvcdMccool}, for every $v\in V$, we have \[\underline{\gd}(\IA_v(3)) \leq 2\rank(G_v)-2-|\calf_v|.\]

Thus, we see that 
\begin{align*}
   \sum_{v \in V(F_N \backslash T_\cala)} \underline{\gd}(\IA_v(3)) &\leq    \sum_{v \in V(F_N \backslash T_\cala)} (2\cdot\rank(G_v)-2-|\calf_v|) \\
   &=  2 \left(\sum_{v \in V(F_N \backslash T_\cala)} (\rank(G_v)-1)\right)-|\calf| \\
   &=  2(N-1)-|\calf|\\
   &\leq 2N-3-|E|;
\end{align*}
where the last inequality follows from the above claim.
\end{proof}

\begin{proof}[Proof of \Cref{Prop:gdDehntwists}] Let $\phi \in \IA_N(3)$ be a Dehn twist. Let $T_\cala$ be the JSJ tree associated with $\phi$. By \Cref{Thm:sesDehntwist}, the group $C(\phi)$ fits in a short exact sequence 
\[1 \to K' \to C(\phi) \to \prod_{v\in V(F_N \backslash T_\cala)} \IA_v(3) \to 1, \] where $K'$ is a free abelian group of rank $|E(F_N \backslash T_\cala)|$ and, for every $v\in V(F_N \backslash T_\cala)$, the group $\IA_v(3)$ is a finite index subgroup of $\Out(G_v,\mathrm{Inc}_v)$. 

Moreover, the outer automorphism $\phi$ is in the kernel of this homomorphism. Thus, the Weyl group $W(\phi)$ of $\phi$ fits in a short exact sequence 
\[1 \to K \to W(\phi) \to \prod_{v\in V(F_N \backslash T_\cala)} \IA_v(3) \to 1,\] where $K$ is an abelian group of rank $|E(F_N \backslash T_\cala)|-1$. By \Cref{Theo:freefactorconjclassfixed}~$(2)$ and \Cref{lem:PropertyCF}, the group $\IA_v(3)$ is torsion free. Thus, by~\Cref{prop:haefliger} and \Cref{Lem:Bounddimensionimageweylgroup}, we have 
\begin{align*}
    \underline{\gd}(W(\phi)) &\leq  \underline{\gd}(K)+ \sum_{v\in V(F_N \backslash T_\cala)} \underline{\gd}(\IA_v(3)) \\
    &\leq  2N-3-|E(F_N \backslash T_\cala)|+|E(F_N \backslash T_\cala)|-1 \\
    &\leq  2N-4,
\end{align*}
which concludes the proof.
\end{proof}

\section{The virtually cyclic dimension of \texorpdfstring{$\IA_N(3)$}{IAN(3)}}
\label{sec:ThmA}

In this section we prove \Cref{thmx.gdIA3}.  Before proceeding to the proof we need some preliminary results.

Let $G$ be a group. Denote by $\mathcal{VC}_\infty$ the collection of infinite virtually cyclic subgroups of $G$. Consider $[\mathcal{VC}_\infty]$ the set of commensuration classes of $\mathcal{VC}_\infty$. Let $I$ be a set of representatives of conjugacy classes in $[\mathcal{VC}_\infty]$. 

\begin{thm}\label{LuckWeiermann}
Let $G$ be a group. Let $I$ be defined as above, and assume that for each $H\in I$ we have $N_G[H]=N_G(H)$. For each $H\in I$, choose models for $\underline{E}N_G(H)$ and $\underline{E}W_G(H)$, where $W_G(H)=N_G(H)/H$. Now consider the homotopy $G$-pushout:
	  \[ \begin{tikzcd} 
   \coprod_{H\in I} G\times_{N_G(H)}\underline{E}N_G(H) \arrow[r,"i"] \ar[d,"\coprod_{H\in I}Id_G\times_{N_G(H)}f_{H}"'] & \underline{E}G \arrow[d] \\ \coprod_{H\in I}G\times_{N_G(H)} \underline{E}W_G(H) \arrow[r] & X
        \end{tikzcd} \]
	  where $\underline{E}W_G(H)$ is viewed as an $N_G(H)$-CW-complex by restricting with the projection $N_G(H)\to W_G(H)$, the maps starting from the left upper corner are cellular and one of them is an inclusion of $G$-CW-complexes. Then $X$ is a model for $\underline{\underline{E}}G$.
\end{thm}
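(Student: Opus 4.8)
The plan is to recognise this as an instance of the general L\"uck--Weiermann pushout machinery from \cite[Section~2]{LW12}, specialised to the families $\calf=\FIN$ and $\calg=\VC$. The key observation is that $\VC$ decomposes into $\FIN$ together with the infinite virtually cyclic subgroups $\mathcal{VC}_\infty$, and that any two groups in $\mathcal{VC}_\infty$ are either commensurable or generate (together with the rest of $G$) no new intermediate family members beyond what conjugation already records. Concretely, I would first verify that the equivalence relation on $\mathcal{VC}_\infty$ given by commensurability is the one to which the L\"uck--Weiermann construction applies: one needs that if $H_1,H_2\in\mathcal{VC}_\infty$ are commensurable and $K\in\VC\setminus\FIN$ satisfies $K\subseteq N_G[H_1]$, then $K$ lies in the same commensurability class (this is automatic since an infinite virtually cyclic subgroup of $N_G[H]$ whose commensurator contains $H$ must be commensurable with $H$). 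This is routine but must be stated.

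Next I would set up the bookkeeping: choose $I$ a set of representatives of the $G$-conjugacy classes of commensurability classes in $[\mathcal{VC}_\infty]$, and for each $H\in I$ note that $N_G[H]$ is exactly the isotropy-relevant stabiliser in the L\"uck--Weiermann framework. The hypothesis $N_G[H]=N_G(H)$ is precisely what lets us replace the commensurator by the normaliser throughout, so that $W_G(H)=N_G(H)/H$ makes sense and $\underline{E}W_G(H)$, inflated along $N_G(H)\to W_G(H)$, is a model for $E_{\calg[H]}N_G(H)$ where $\calg[H]$ is the family of subgroups of $N_G(H)$ that are either finite or commensurable with $H$. Here one uses that $H$ is infinite virtually cyclic and normal in $N_G(H)$, so a subgroup of $N_G(H)$ is commensurable with $H$ iff it has finite image in $W_G(H)$ and infinite intersection with $H$; I would check that inflating a model for $\underline{E}W_G(H)$ gives the right classifying space by the standard argument that the inflation of $E_{\FIN}Q$ along a surjection with kernel a given group is $E_{\mathcal{D}}$ for the family $\mathcal{D}$ of subgroups mapping into $\FIN(Q)$, combined with the observation that our target family differs from this only in the finite part, which is handled by the pushout's left-hand corner $\coprod_H G\times_{N_G(H)}\underline{E}N_G(H)$.

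The core of the proof is then the direct citation of the L\"uck--Weiermann pushout theorem \cite[Theorem~2.3]{LW12}: given a group $G$, nested families $\calf\subseteq\calg$, an equivalence relation $\sim$ on $\calg\setminus\calf$ satisfying their axioms, and for each class $[H]$ with stabiliser $N_G[H]$ models for $E_{\calf\cap N_G[H]}N_G[H]$ and $E_{\calg[H]}N_G[H]$, the homotopy pushout they write down is a model for $E_\calg G$. Substituting $\calf=\FIN$, $\calg=\VC$, $\sim{}={}$commensurability, $N_G[H]=N_G(H)$, and our chosen models, the asserted pushout square is exactly theirs, so $X$ is a model for $E_{\VC}G=\underline{\underline{E}}G$. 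The cellularity and cofibration hypotheses on the maps out of the upper-left corner are standard and can be arranged by replacing maps up to $G$-homotopy, as in \cite{LW12}.

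The main obstacle I anticipate is not any single hard step but the careful identification of the relevant families of subgroups of $N_G(H)$ and the verification that the inflated $\underline{E}W_G(H)$ is genuinely a model for the correct $E_{\calg[H]}N_G(H)$ — in particular checking that every subgroup of $N_G(H)$ commensurable with $H$ indeed has finite image in $W_G(H)$, which uses that $H$ is \emph{infinite} virtually cyclic (so that a subgroup $L\le N_G(H)$ with $[H:H\cap L]<\infty$ and $[L:H\cap L]<\infty$ forces $L\cap H$ of finite index in $H$, hence $L/(L\cap H)\hookrightarrow W_G(H)$ finite). Once that dictionary is fixed, the statement is a formal consequence of \cite[Theorem~2.3]{LW12}.
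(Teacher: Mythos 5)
Your proposal is correct and follows essentially the same route as the paper: identify the family of subgroups of $N_G(H)$ with finite image in $W_G(H)$ with the family of finite subgroups together with the infinite virtually cyclic subgroups commensurable with $H$ (using that $H$ is infinite virtually cyclic and that $N_G[H]=N_G(H)$), so that the inflated $\underline{E}W_G(H)$ is a model for $E_{\calf[H]}N_G(H)$, and then invoke \cite[Theorem~2.3]{LW12}. The extra verifications you flag (the commensurability dictionary and the axioms for the equivalence relation) are exactly the content the paper compresses into its short argument, so there is no gap.
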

\begin{proof}
   Let $H\in I$. Consider the quotient projection $p:N_G(H)\to W_G(H)$. Let $\mathcal{F}in^*$ be the family of subgroups of $N_G(H)$ that have finite image in $W_G(H)$. Let $\calf[H]$ be the family of all finite subgroups of $N_G[H]$ and all virtually cyclic subgroups of $N_G[H]$ that are commensurable with $H$. Note that $\mathcal{F}in^*=\calf[H]$, and as a consequence every model for $\underline{E}W_G(H)$ is a model for $E_{\calf[H]}N_G(H)$. Now the statement follows from \cite[Theorem 2.3]{LW12}. 
\end{proof}

An immediate corollary of~\Cref{prop:haefliger} is the following.

\begin{corollary}\label{Cor_LuckWeiermann}
    Under the hypothesis of \Cref{LuckWeiermann}, we have the following
    \[\underline{\underline{\gd}}(G)\leq \max\{ \underline{\gd}(G)+1, \underline{\gd}(W_G(H))|H\in I \}.\]
\end{corollary}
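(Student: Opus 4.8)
The plan is to invoke \Cref{LuckWeiermann}: under its hypothesis the homotopy $G$-pushout $X$ it produces is a model for $\underline{\underline{E}}G$, so $\underline{\underline{\gd}}(G)\le\dim X$, and everything reduces to bounding $\dim X$. For this I would use the standard homotopy-colimit dimension estimate underlying \Cref{prop:haefliger}: a homotopy $G$-pushout of $A\leftarrow C\rightarrow B$ is $G$-homotopy equivalent to the double mapping cylinder $A\cup_C(C\times[0,1])\cup_C B$, which is a $G$-CW-complex of dimension at most $\max\{\dim A,\dim B,\dim C+1\}$. Since being a model for $\underline{\underline{E}}G$ is invariant under $G$-homotopy equivalence, this double mapping cylinder is again such a model, and so $\underline{\underline{\gd}}(G)$ is bounded by the same quantity.

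Concretely, I would first fix, for each $H\in I$, minimal-dimensional models for $\underline{E}G$, $\underline{E}N_G(H)$ and $\underline{E}W_G(H)$, together with the cellular $N_G(H)$-maps needed to build the pushout of \Cref{LuckWeiermann}: one from $\underline{E}N_G(H)$ into $\res^G_{N_G(H)}\underline{E}G$, and one from $\underline{E}N_G(H)$ into $\underline{E}W_G(H)$, the latter regarded as an $N_G(H)$-CW-complex via $N_G(H)\to W_G(H)$. This second map exists because, when so regarded, $\underline{E}W_G(H)$ is a model for the classifying space of $N_G(H)$ for a family containing $\FIN$ (the equality $\mathcal{F}in^{*}=\calf[H]$ noted in the proof of \Cref{LuckWeiermann}), so a cellular $N_G(H)$-map is provided by the universal property and cellular approximation. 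Inducing up to $G$ and taking disjoint unions assembles these into the two maps out of the upper-left corner; one is turned into an inclusion of $G$-CW-complexes by passing to a mapping cylinder, as \Cref{LuckWeiermann} requires, and since a mapping cylinder of a map from a space of dimension $\le d$ to one of dimension $\le d'$ has dimension $\le\max\{d+1,d'\}$, this costs at most one dimension above the corner.

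It then remains to estimate the three dimensions. We have $\dim\underline{E}G=\underline{\gd}(G)$, and $\coprod_{H\in I}G\times_{N_G(H)}\underline{E}W_G(H)$ has dimension $\max_{H\in I}\underline{\gd}(W_G(H))$. The crucial point is the upper-left corner: since $N_G(H)$ is a subgroup of $G$, the restriction of $\underline{E}G$ to $N_G(H)$ is again a model for $\underline{E}N_G(H)$, hence $\underline{\gd}(N_G(H))\le\underline{\gd}(G)$ and the corner $\coprod_{H\in I}G\times_{N_G(H)}\underline{E}N_G(H)$ has dimension at most $\underline{\gd}(G)$. Feeding these into the double-mapping-cylinder bound gives
\[\underline{\underline{\gd}}(G)\le\dim X\le\max\bigl\{\underline{\gd}(G)+1,\ \underline{\gd}(G),\ \max_{H\in I}\underline{\gd}(W_G(H))\bigr\}=\max\bigl\{\underline{\gd}(G)+1,\ \underline{\gd}(W_G(H))\mid H\in I\bigr\},\]
which is the claim. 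I do not expect a serious obstacle here: this is a dimension count on the L\"uck--Weiermann pushout, and the only delicate points are that the mapping-cylinder manoeuvre needed to meet the cofibration hypothesis of \Cref{LuckWeiermann} must not push the extra $+1$ onto the Weyl-group term --- which is exactly where the bound $\underline{\gd}(N_G(H))\le\underline{\gd}(G)$ is used --- and that being a model for $\underline{\underline{E}}G$ passes along the $G$-homotopy equivalence to the double mapping cylinder; both are routine. (Alternatively one can bound the corner using \Cref{prop:haefliger} for the projection $N_G(H)\twoheadrightarrow W_G(H)$, whose cell stabilisers are virtually cyclic and hence of proper geometric dimension $\le1$, giving $\underline{\gd}(N_G(H))\le\underline{\gd}(W_G(H))+1$; but the subgroup estimate is sharper and yields the stated inequality directly.)
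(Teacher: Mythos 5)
Your proof is correct and follows the standard approach: bound the dimension of the L\"uck--Weiermann pushout by the double-mapping-cylinder estimate, noting that the upper-left corner has dimension at most $\underline{\gd}(G)$ because $N_G(H)\le G$ and restriction of $\underline{E}G$ gives a model for $\underline{E}N_G(H)$. The paper states this as an ``immediate corollary of \Cref{prop:haefliger}'' and gives no further details; that citation is a little loose since \Cref{prop:haefliger} as stated is about $\underline{\gd}$ along a group homomorphism rather than pushout dimensions, but the underlying dimension count is exactly the one you carried out, and your remark that the subgroup bound $\underline{\gd}(N_G(H))\le\underline{\gd}(G)$ is what keeps the $+1$ off the Weyl-group term is precisely the point.
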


\begin{prop}\label{thm.gdIA3} Let $H$ be an infinite cyclic subgroup of $\IA_3(3)$, then 
      $\underline{\gd}(W(H))=\underline{\gd}(\IA_3(3))-1=2$.
\end{prop}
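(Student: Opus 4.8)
The strategy is to establish the two inequalities $\underline{\gd}(W(H)) \leq 2$ and $\underline{\gd}(W(H)) \geq 2$ separately, using the structural description of centralisers of infinite order elements in $\IA_3(3)$ from \Cref{Theo type VF} together with the fact (from \Cref{Coro comm centraliser} and \Cref{Lemma:takingpowercenraliser}) that we may replace $H$ by a power, so that without loss of generality $H = \langle \phi \rangle$ for a root-closed $\phi \in \IA_3(3)$ and $W(H) = W(\phi) = C(\phi)/\langle \phi\rangle$. First I would recall that $\underline{\gd}(\IA_3(3)) = \vcd(\IA_3(3)) = 2\cdot 3 - 3 = 3$ by \cite{CV86} (together with torsion-freeness from \Cref{Prop torsion free}, which identifies proper geometric dimension with geometric dimension, and with the fact that the quotient of Culler--Vogtmann space is a model), so the claimed equality $\underline{\gd}(W(H)) = \underline{\gd}(\IA_3(3)) - 1 = 2$ makes sense.

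For the upper bound $\underline{\gd}(W(\phi)) \leq 2$, I would go through the four cases of \Cref{Theo type VF}. In cases (1)--(3) the centraliser $C(\phi)$ is abelian, $F\times\ZZ$, or $H\times\ZZ$ with $H$ a finite-index subgroup of a product of two finitely generated free groups; in each of these the Weyl group is respectively a finitely generated abelian group of rank at most $2$ (hence $\underline{\gd} \leq 2$), a finitely generated free group (so $\underline{\gd} \leq 1$), or a group with $\underline{\gd} \leq 2$ since the centraliser of an infinite order element in a product of two free groups sits in $F\times\ZZ$, so its quotient is virtually free. Case (4) is the Dehn twist case: here I would invoke \Cref{Prop:gdDehntwists}, which gives $\underline{\gd}(W(\phi)) \leq 2N - 4 = 2$ for $N = 3$. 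This handles the upper bound.

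For the lower bound $\underline{\gd}(W(\phi)) \geq 2$, the cleanest route is to exhibit a subgroup of $W(\phi)$ of proper geometric dimension $2$, or to use a cohomological obstruction. I expect this to be the main obstacle, since it requires a uniform argument across the cases or at least a witness in each. The natural candidate is to show $W(\phi)$ contains a copy of $\ZZ^2$ (equivalently that $C(\phi)$ contains $\ZZ^3$ or a product $F_2\times\ZZ$ with $F_2$ nonabelian providing a $\ZZ^2$ in the quotient), which forces $\underline{\gd} \geq \vcd(\ZZ^2) = 2$ by monotonicity of proper geometric dimension under subgroups. Showing such a subgroup exists should follow from the explicit forms in \Cref{Theo type VF}: a Dehn twist in $\IA_3(3)$ has a free abelian $K$ of rank $|E(F_3\backslash T)| \geq 1$ in its centraliser, and careful bookkeeping on the JSJ tree for $F_3$ (which has small complexity) should produce enough commuting twists; in cases (2)--(3) the free factors $A_i$ of rank $\leq 2$ in $F_3$ contribute Dehn twists commuting with $\phi$. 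Alternatively, one can argue that $\vcd(W(\phi)) = 2$ directly by computing that $W(\phi)$ is a virtual duality group of dimension $2$ (it is of type $\mathsf{VF}$ by \Cref{Theo type VF}) using the short exact sequences, and then invoke that $\underline{\gd} \geq \vcd$. I would present the lower bound via the $\ZZ^2$-subgroup argument as the primary approach, falling back on the duality/cohomological dimension computation if a uniform $\ZZ^2$ is not immediately visible in the infinitely-ended sporadic subcase.
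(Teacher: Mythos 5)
Your upper-bound argument is, in substance, the paper's proof: the paper runs through the four cases of \Cref{Theo type VF}, getting $\RR^{r-1}$ in the abelian case, a tree in the $F\times\ZZ$ case, a product of two trees (after embedding $W(H)$ into $F_1\times F_2\times C$ with $C$ finite cyclic) in the third case, and invoking \Cref{Prop:gdDehntwists} with $N=3$ in the Dehn twist case. One small correction to your Case (3): the relevant point is simply that $H$ sits in the $\ZZ$ factor so $W(H)$ embeds in $F_1\times F_2\times C$, and a product of two trees is a model; your remark about centralisers of infinite order elements of $F_1\times F_2$ sitting in $F\times\ZZ$ belongs to a different case (it is used in Case (6) of \Cref{Coro:sesWeylgroup}), not here.

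The genuine gap is your lower bound, and it cannot be repaired in the form you propose: there is no uniform $\ZZ^2$ (nor any nontrivial lower bound) valid for every infinite cyclic $H\leq\IA_3(3)$. For instance, if $\phi\in\IA_3(3)$ is fully irreducible, its centraliser in $\Out(F_3)$ is virtually cyclic \cite{bestvina1997laminations}, so $C(\phi)$ is infinite cyclic by torsion-freeness of $\IA_3(3)$, and for root-closed $\phi$ the Weyl group $W(\phi)$ is trivial, giving $\underline{\gd}(W(H))=0$; this falls under Case (1) of \Cref{Theo type VF} with $r=1$. The same example defeats the fallback of showing $W(\phi)$ is a virtual duality group of dimension $2$. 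In fact the paper's own proof never addresses a lower bound: in each case it only exhibits a model for $\underline{E}W(H)$ of dimension \emph{at most} $2$ (dimension $r-1\leq 2$, a tree, a product of two trees, or the bound $2N-4$), and in the only place \Cref{thm.gdIA3} is used, namely the base case of \Cref{prop:WeylN}, only the inequality $\underline{\gd}(W(H))\leq\underline{\gd}(\IA_3(3))-1=2$ is needed. So the stated equality should be read as (and proved as) the upper bound; you should drop the lower-bound portion of your plan rather than try to force it through, since as a statement about all infinite cyclic $H$ it is false.
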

\begin{proof}
Denote $G=\IA_3(3)$, and recall that $G$ is torsion free.  Then by \Cref{Theo type VF} we have four possibilities for $C(H)$ (and so does for $W(H)$), let us work out each of them using the same numeration as in the theorem.

    \medskip
    
\noindent\textbf{Case 1.}
 The centraliser $C(H)$ is $\Z^r$ with $r\leq 3$. Thus $W(C)$ is an abelian group of rank at most 2. Hence $\RR^{r-1}$ is a model for $\underline{E}W(H)$, in particular this model has dimension at most 2.\hfill$\blackdiamond$

     \medskip
    
\noindent\textbf{Case 2.}
 The centraliser $C(H)$ is isomorphic to $F\times \ZZ$ where $F$ is a finitely generated free group and $H$ lies inside the $\ZZ$ under the isomorphism.  The Weyl group $W(H)$ has the form $F\times C$ where $C$ is a finite cyclic group. Since proper classifying space models respect direct products we have $\underline{E}W(H)=\underline{E}F\times \underline{E}C=\underline{E}F$ and the latter can be taken to be a tree.\hfill$\blackdiamond$

     \medskip
    
\noindent\textbf{Case 3.}
    The centraliser $C(H)$ is isomorphic to a direct product $K\times \ZZ$ where $K$ is a finite index subgroup of a direct product of two finitely generated free groups and $H$ lies in the $\ZZ$ factor under the isomorphism. In this case $C(H)\leq F_1\times F_2\times \ZZ$, hence the Weyl group $W(H)$ embeds into $F_1\times F_2\times C$ with $C$ a finite cyclic group. Thus it is enough to find a model for $\underline{E}(F_1\times F_2\times C)$ since by restriction it will be also a model for $\underline{E}W(H)$. As in the previous item such a model can be taken to be of the form $T_1\times T_2$ with $T_1$ and $T_2$ are trees.  Thus  we have a model for $\underline{E}W(H)$ of dimension 2.\hfill$\blackdiamond$

\medskip
    
\noindent\textbf{Case 4.}
    The group $H$ is generated by a Dehn twist. By~\Cref{Prop:gdDehntwists}, we have a model for $\underline{E}W(H)$ of dimension $2$.\hfill$\blackdiamond$

We have exhausted all possible cases for $W(H)$, completing the proof.
\end{proof}

\begin{prop}\label{prop:WeylN}
    Let $N\geq 3$. Consider a maximal infinite cyclic subgroup $H$ of $G=\IA_N(3)$. Then $\underline{\gd}(W_G(H))\leq \underline{\gd}(G)-1=2N-4.$
\end{prop}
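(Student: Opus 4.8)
The plan is to argue by induction on $N$, the base case $N=3$ being \Cref{thm.gdIA3}, and to reduce the statement to the structure theorem \Cref{Coro:sesWeylgroup} for Weyl groups of root-closed infinite-order elements. First I would identify $W_G(H)$ with such a Weyl group. Let $\phi$ generate $H$; since $H$ is a \emph{maximal} infinite cyclic subgroup, $\phi$ is root-closed and of infinite order. If $\psi\in N_G(H)$ then $\psi\phi\psi^{-1}\in\{\phi,\phi^{-1}\}$, and in the second case $\psi^2$ centralises $\phi$, hence $\psi$ centralises $\phi$ by \Cref{Prop:powercommute}, forcing $\phi=\phi^{-1}$, which is absurd. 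Thus $N_G(H)=C(\phi)$ and $W_G(H)=W(\phi)=C(\phi)/\langle\phi\rangle$, so \Cref{Coro:sesWeylgroup} applies and it remains to bound $\underline{\gd}$ in each of its six cases by $2N-4$.

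The easy cases: if $\phi$ is a Dehn twist (case (1)), \Cref{Prop:gdDehntwists} gives the bound directly. If $\calf$ is nonsporadic (case (4)), $W(\phi)$ embeds in $\Aut(A)\times\Aut(B)$ with $F_N=A\ast B$, so $\rank(A)+\rank(B)=N$, and using $\underline{\gd}(\Aut(F_n))\le 2n-2$ together with monotonicity of $\underline{\gd}$ under subgroups gives $\underline{\gd}(W(\phi))\le(2\rank(A)-2)+(2\rank(B)-2)=2N-4$. If $F_N$ is one-ended relative to $\cala$ (cases (2)–(3)), $W(\phi)$ embeds either in $\Out(A)\times\Out(B)$ with $\rank(A)+\rank(B)=N+1$ and $2\le\rank(A),\rank(B)\le N-1$, whence $\underline{\gd}\le(2\rank(A)-3)+(2\rank(B)-3)=2N-4$ by \cite{CV86}, or in $\Out(A\ast\langle tst^{-1}\rangle,[s],[tst^{-1}])$ with $\rank(A)=N-1$, where \Cref{Thm:dimensionrelative} and the argument of \Cref{coro:boundvcdMccool} applied to the image of $C(\phi)$ (which inherits Property $(CF)$ from $\IA_N(3)$) give $\underline{\gd}\le 2N-2-2=2N-4$, the extra $-2$ coming from the fact that $[s]$ and $[tst^{-1}]$ are distinct conjugacy classes in $A\ast\langle tst^{-1}\rangle$, so the associated free factor system has at least two components.

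For case (6) ($\calf$ sporadic, $\rho_\infty(\phi)$ trivial), the kernel $K$ is free-by-finite, so $\underline{\gd}(K)\le 1$, and \Cref{prop:haefliger} combined with $\underline{\gd}(\IA(A,3))\le\max(0,2\rank(A)-3)$, $\sum_{[A]\in\calf}\rank(A)\le N$ and $|\calf|\le 2$ gives $\underline{\gd}(W(\phi))\le 2N-4$. The remaining case (5) ($\calf$ sporadic, $\rho_\infty(\phi)$ infinite) is the crux. Here $\underline{\gd}(K)\le 2$ and $W(\phi)$ maps to $H_\infty/\rho_\infty(\langle\phi\rangle)\times H_T$ with $H_\infty=\prod_{[A]\in\calf_\infty}C(\phi|_A)$ and $H_T=\prod_{[A]\in\calf_T}\IA(A,3)$. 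I would observe that $\prod_{[A]\in\calf_\infty}\langle\phi|_A\rangle\cong\ZZ^{|\calf_\infty|}$ is central in $H_\infty$ and that $\rho_\infty(\phi)$ is a primitive vector in it, so $H_\infty/\rho_\infty(\langle\phi\rangle)$ is $\ZZ^{|\calf_\infty|-1}$-by-$\prod_{[A]\in\calf_\infty}W(\phi|_A)$. For each $[A]\in\calf_\infty$ one has $2\le\rank(A)\le N-1$; after replacing $\phi|_A$ by its root closure (legitimate since the centraliser is unchanged by \Cref{Lemma:takingpowercenraliser} and \Cref{Theo:roots}, and $W(\phi|_A)$ surjects with finite kernel onto the Weyl group of the root closure), the inductive hypothesis — or, when $\rank(A)=2$, the fact that $\IA_2(3)$ is free — gives $\underline{\gd}(W(\phi|_A))\le 2\rank(A)-4$. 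Applying the general form of \Cref{prop:haefliger} twice then yields
\[\underline{\gd}(W(\phi))\le \underline{\gd}(K)+(|\calf_\infty|-1)+\sum_{[A]\in\calf_\infty}(2\rank(A)-4)+\underline{\gd}(H_T),\]
and a short case analysis on $|\calf|\in\{1,2\}$, using $\sum_{[A]\in\calf}\rank(A)\le N$ (more precisely $N-1$ or $N$ by \Cref{Rmk:rankfreefactorsystem}) and $\underline{\gd}(\IA(A,3))=0$ for $\rank(A)\le 1$, shows the right-hand side is at most $2N-4$.

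I expect the main obstacle to be exactly the bookkeeping in case (5): unwinding the quotient $H_\infty/\rho_\infty(\langle\phi\rangle)$, passing to root closures so as to legitimately invoke the induction on the factors $W(\phi|_A)$, and verifying that the groups arising as quotients in these Haefliger-type estimates are torsion-free or finite-by-torsion-free so that \Cref{prop:haefliger} applies. The only other points needing care are the routine verification in cases (2)–(3) that the bounds of \cite{CV86} and \Cref{Thm:dimensionrelative} transfer to the relevant subgroups, and the elementary but slightly fiddly rank arithmetic that makes every case land at $2N-4$.
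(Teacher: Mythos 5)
Your proposal is correct and follows essentially the same route as the paper: induction on $N$ with base case \Cref{thm.gdIA3}, reduction to the six cases of \Cref{Coro:sesWeylgroup}, and the same case-by-case dimension bounds via \Cref{Prop:gdDehntwists}, \Cref{coro:boundvcdMccool}, \Cref{prop:haefliger} and \cite{CV86}. If anything you are more careful than the paper in case (5), where you pass to root closures of the restrictions $\phi|_A$ (comparing Weyl groups up to finite kernel) before invoking the inductive hypothesis, and where you make explicit the identification $N_G(H)=C(\phi)$; the paper treats both points more tersely.
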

\begin{proof}
    Let $\phi$ generate $H$.  We proceed by induction. Our base case $N=3$ was already proved in \Cref{thm.gdIA3}.

    Assume now that for every $N'< N$ the statement is true. Let $\calf=\{[A_1],\ldots,[A_\ell]\}$ be the free factor system given by Corollary~\ref{Coro:sesWeylgroup}. For every $i\in \{1,\ldots,\ell\}$, identify $\IA(A,3)$ with $\IA_{n_i}(3)$ where $\rank(A)=n_i$. Note that, for every $i\in \{1,\ldots,\ell\}$, we have $\underline{\gd}(\IA_{n_i}(3))=2n_i-3$ if $n_i \geq 2$ and $0$ otherwise. Now, we will prove the statement for $N$, by exhausting all cases for $W(H)$ described in \Cref{Coro:sesWeylgroup}. We use the notation from that theorem without further explanation. 
    
\medskip

\noindent\textbf{Case 1.} 
$\phi$ is a Dehn twist.  This is already proven in \Cref{Prop:gdDehntwists}.\hfill$\blackdiamond$

\medskip

\noindent\textbf{Case 2.} 
The group $W(\phi)$ is isomorphic to a subgroup of $\Out(A) \times \Out(B)$, where $A,B \subseteq F_N$ are such that $\rank(A)+\rank(B)=N+1$ and $\rank(A), \rank(B) \leq N-1$.  

We have $\underline{\gd}(\Out(A))=2\cdot\rank(A)-3$ and $\underline{\gd}(\Out(B))=2\cdot\rank(B)-3$ (both $A$ and $B$ are non-abelian). So 
\begin{align*}
\underline{\gd}(W(\phi))&\leq 2(\rank(A)+\rank(B))-6\\
&=2(N+1)-6\\
&=2N-4 
\end{align*}
as required.\hfill$\blackdiamond$

\medskip

\noindent\textbf{Case 3.} 
The group $W(\phi)$ is isomorphic to a subgroup of $\Out(A \ast \langle s^t\rangle, [s], [s^t])$, where $A \subseteq F_N$, $\rank(A)=N-1$, $s \in A$ and $t$ is a basis element of $F_N$. Let $\calc=\{[s],[s^t]\}$ and note that $|\calf_\calc|=2$ since $s \in A$ (see the notations in \Cref{coro:boundvcdMccool}). By \Cref{coro:boundvcdMccool}, a model for $\underline{E}\Out(A \ast \langle s^t\rangle, [s], [s^t])$ has dimension $$2\rank(A \ast \langle s^t\rangle)-2-|\calf_\calc|=2N-4.$$

A model for $\underline{E}W(\phi)$ is given by $\underline{E}\Out(A \ast \langle s^t\rangle, [s], [s^t])$. Thus, 
\[\underline{\gd}(W(\phi)) \leq 2N-4\]
as required.\hfill$\blackdiamond$

\medskip

\noindent\textbf{Case 4.} 
The group $W(\phi)$ is isomorphic to a subgroup of $\Aut(A) \times \Aut(B)$, where $A,B \subseteq F_N$ are nontrivial subgroups such that $F_N=A \ast B$.

If $\rank(A)=1$, then $\Aut(A)$ is finite and so $\underline{\gd}(\Aut(A))=0$.  Otherwise, a model $X$ for $\underline{E}\Aut(A)$ is given by an $\Aut(A)$-fibration
$X\to \underline{E}\Out(A)$.  Here, the stabilisers of the $\Aut(A)$-action on $\underline E\Out(A)$ are virtually free, being extensions of $A$ by finite subgroups of $\Out(A)$.  It follows from \Cref{prop:haefliger} that $\underline{\gd}(\Aut(A))=2\rank(A)-2$.  An identical argument gives $\underline{\gd}(\Aut(B))\leq 2\rank(B)-2$.  Now, a model for for $\underline{E}W(\phi)$ is given by $\underline{E}\Aut(A)\times \underline{E}\Aut(B)$.  So,
\begin{align*}
    \underline{\gd}(W(\phi)) &\leq 2(\rank(A)+\rank(B))-4\\
        &=2N-4
\end{align*}
as required.\hfill$\blackdiamond$

\medskip

\noindent\textbf{Case 5.}
 The Weyl group $W(\phi)$ fits into an exact sequence 
        \[1 \to K \to W(\phi) \to H_\infty/\rho_\infty(\langle \phi \rangle) \times H_T,\]
        where $K$ is a finite index subgroup of a direct product of two finitely generated free groups.

        In this case $\calf=\{A_1,A_2\}$, $A_1$ is never trivial, and $A_2$ might be trivial or not, see \Cref{Rmk:rankfreefactorsystem}.  We subdivide into two subcases depending on whether $A_2$ is trivial or not.

        Assume $A_2$ is trivial. Then $\calf=\calf_\infty$,  $\calf_T$ is empty, and $n_1\leq N-1$. Hence, we have the following exact sequence 
        \[1 \to K \to W(\phi) \to C(\langle\phi|_{A_1}\rangle)/\rho_\infty(\langle \phi \rangle)=W_{\IA(A_1,3)}(\rho_\infty(\langle \phi \rangle)).\]
        Now, by hypothesis $\phi$ is not a proper power.  This implies that $W_{\IA(A_1,3)}(\rho_\infty(\langle \phi \rangle))$ is torsion free. Now by \Cref{prop:haefliger} and the induction hypothesis we get
        \begin{align*}
            \underline{\gd}(W(\phi)) & \leq 2+\underline{\gd}(W_{IA(A_1,3)}(\rho_\infty(\langle \phi \rangle)))\\
            &\leq 2+2n_1-4\\
            &\leq 2+2(N-1)-4=2N-4.
        \end{align*}

        Now, assume $A_2$ is not trivial. We distinguish two cases depending on $\calf_\infty$. If $\calf_\infty=\{A_1\}$, then we have the exact sequence 
         \[1 \to K \to W(\phi) \to W_{IA(A_1,3)}(\rho_\infty(\langle \phi \rangle))\times \IA(A_2,3).\]
        By \Cref{prop:haefliger} and the induction hypothesis we get
        \begin{align*}
            \underline{\gd}(W(\phi)) & \leq 2+\underline{\gd}(W_{IA(A_1,3)}(\rho_\infty(\langle \phi \rangle))) + \underline{\gd}(\IA(A_2,3))\\
            &\leq 2+2n_1-4+2n_2-3\\
            &\leq 2(n_1+n_2)-5\leq 2N-4,
        \end{align*}
        where the last inequality follows from the fact that $\sum n_i\leq N$, see \Cref{Rmk:rankfreefactorsystem}.
        Finally,  if $\calf_\infty=\{A_1,A_2\}$, then we have the exact sequence 
         \[1 \to K \to W(\phi) \to (C(\phi|_{A_1})\times C(\phi|_{A_2}))/\rho_\infty(\langle \phi \rangle)=:Q.\]
         On the other hand, taking the quotient of $C(\phi|_{A_1})\times C(\phi|_{A_2})$ by $\langle \phi|_{A_1}, \phi|_{A_2}\rangle\cong \ZZ^2$ , we conclude $Q$ fits in the following exact sequence
         \[1\to \ZZ \to Q\to W(\phi|_{A_1})\times W(\phi|_{A_2}). \]
         Hence, we get
         \begin{align*}
             \underline{\gd}(Q) &\leq 1+ \underline{\gd}(W(\phi|_{A_1}))+ \underline{\gd}(W(\phi|_{A_2}))\\
             &\leq 1+2n_1-4+n_2-4\\
             &\leq 2N-7
         \end{align*}
         and therefore
         \[\underline{\gd}(W(\phi)\leq 2+2N-7=2N-5\]
         as required.

        \medskip

        \noindent\textbf{Case 6.}
        The Weyl group $W(\phi)$ fits into an exact sequence
        \[ 1 \to K \to W(\phi) \to \prod_{[A]\in\calf}\IA(A,3)\]
        where $K$ is the direct product of a finitely generated free (maybe cyclic or trivial) group and a finite group. Moreover, we have $|\calf|\leq 2$ by \Cref{Thm:sesoneendedcase}~(2). Write $K=K'\times T$ where $K'$ is a (possibly trivial) free group and $T$ is the finite group. Thus, we may rewrite the quotient as $ \IA_{n_1}(3) \times \IA_{n_2}(3)$ such that each $n_i\leq N-1$ and $n_1+n_2\leq N$ (see~Remark~\ref{Rmk:rankfreefactorsystem}).  We may now build a model $X$ for $\underline{E}W(\phi)$ as a $G$-fibration
        \[ \underline EK' \to X \to  E\IA_{n_1}(3) \times E\IA_{n_2}(3). \]
        Since $K'$ is free we have $\underline{\gd}(K')=1$ and on the other end of the fibration we have $\underline{\gd}(\IA_{n_i}(3))=2n_i-3$ if $n_i \geq 2$ and $0$ otherwise.  If both $n_1,n_2 \geq 2$, then 
        \[\underline{\gd}(W(\phi))\leq 1+\sum_{i=1}^2 (2n_i-3)\leq 1+2N-6\leq 2N-4.\] 
        If say $n_2 \leq 1$, then $2\leq n_1 \leq N-1$ and 
        \[\underline{\gd}(W(\phi))\leq 1+2(N-1)-3)\leq 1+2N-5\leq 2N-4\]
        as required.\hfill$\blackdiamond$

    We have exhausted all possible cases for $W(\phi)$ completing the proof.
\end{proof}

\begin{thmx}\label{thmx.gdIA3}
    Let $N\geq 1$.  Then, $\underline{\underline{\gd}}(\IA_N(3)))=2N-2$.
\end{thmx}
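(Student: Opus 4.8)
The plan is to establish the two inequalities $2N-2\le\underline{\underline{\gd}}(\IA_N(3))$ and $\underline{\underline{\gd}}(\IA_N(3))\le 2N-2$; the second is where the machinery built above enters. First I would dispose of the degenerate cases. For $N=1$ one has $\Out(F_1)=\ZZ/2\ZZ$ acting faithfully on $H_1(F_1;\FF_3)=\FF_3$, so $\IA_1(3)$ is trivial and both sides of the asserted equality are $0$. For $N=2$ the group $\IA_2(3)$ is a torsion-free finite index subgroup of $\Out(F_2)\cong\GL_2(\ZZ)$, hence a non-cyclic finitely generated free group, and $\underline{\underline{\gd}}$ of such a group is $2$ by the computation for hyperbolic groups in \cite{JuanPiLeary2006}. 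So from now on I assume $N\ge 3$ and write $G=\IA_N(3)$.

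For the lower bound, recall that $\Out(F_N)$ contains a free abelian subgroup of rank $2N-3$ (this underlies the lower bound on $\vcd(\Out(F_N))$ in \cite{CV86}). Intersecting such a subgroup with the finite index subgroup $G$ yields $A\le G$ with $A\cong\ZZ^{2N-3}$. Since a $G$-CW model for $\underline{\underline{E}}G$ restricts along $A\hookrightarrow G$ to a model for $\underline{\underline{E}}A$ (the family of virtually cyclic subgroups of $G$ meets $A$ in the family of virtually cyclic subgroups of $A$), the invariant $\underline{\underline{\gd}}$ is monotone under passage to subgroups, so $\underline{\underline{\gd}}(G)\ge\underline{\underline{\gd}}(\ZZ^{2N-3})=(2N-3)+1=2N-2$, the last equality being the classical computation of Lück--Weiermann \cite{LW12} (valid as $2N-3\ge 2$).

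For the upper bound I would apply \Cref{LuckWeiermann} and its consequence \Cref{Cor_LuckWeiermann} to $G$. Since $G$ is torsion-free by \Cref{Prop torsion free}, every infinite virtually cyclic subgroup of $G$ is in fact infinite cyclic, and by uniqueness of roots (\Cref{Theo:roots}, together with \Cref{Prop:powercommute} and \Cref{Theo virtually abelian is abelian}) each commensuration class of such subgroups contains a unique root-closed, i.e.\ maximal, representative; I take the index set $I$ of \Cref{LuckWeiermann} to consist of subgroups $H=\langle\phi\rangle$ with $\phi\in G$ root-closed of infinite order. For each such $H$ one has $C_G(\phi)\subseteq N_G(H)\subseteq N_G[H]$, and \Cref{Coro comm centraliser} identifies the two extremes, so $N_G[H]=N_G(H)=C_G(\phi)$; in particular the hypothesis of \Cref{LuckWeiermann} is met and $W_G(H)=C_G(\phi)/\langle\phi\rangle=W(\phi)$. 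Hence \Cref{Cor_LuckWeiermann} gives
\[\underline{\underline{\gd}}(G)\le\max\bigl\{\,\underline{\gd}(G)+1,\ \underline{\gd}(W(\phi))\ :\ \langle\phi\rangle\in I\,\bigr\}.\]
Now $G$ is torsion-free of finite index in $\Out(F_N)$ and $\vcd(\Out(F_N))=2N-3$ by \cite{CV86}, so $\underline{\gd}(G)=\cd(G)=2N-3$ (no Eilenberg--Ganea phenomenon occurs as $2N-3\ge 3$); and by \Cref{prop:WeylN} each Weyl group satisfies $\underline{\gd}(W(\phi))\le 2N-4$. Therefore the displayed maximum equals $2N-2$, and combined with the lower bound this proves $\underline{\underline{\gd}}(\IA_N(3))=2N-2$.

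The whole difficulty lies upstream of this assembly, not in it. The estimate $\underline{\gd}(W(\phi))\le 2N-4$ of \Cref{prop:WeylN} is an induction on $N$ resting on the structure theorem \Cref{thm:classificationcentralisers} for centralisers, the analysis of stabilisers of relative arational trees in \Cref{Prop existence splitting arational tree}, the Dehn-twist bound \Cref{Prop:gdDehntwists}, and the Weyl-group exact sequences of \Cref{Coro:sesWeylgroup}; and the verification of the Lück--Weiermann hypothesis hinges on Property $(C)$ for $\Out(F_N)$ (\Cref{Prop:property(C)}) and on \Cref{Coro comm centraliser}. Once all of these are in hand, the proof above is essentially bookkeeping: reduce to torsion-free/root-closed data, convert commensurators to normalisers, and read the dimension off of \Cref{Cor_LuckWeiermann}.
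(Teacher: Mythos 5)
Your proposal is correct and follows essentially the same route as the paper: lower bound via a $\ZZ^{2N-3}$ subgroup (the paper cites \cite{CFH06} rather than \cite{LW12} for $\underline{\underline{\gd}}(\ZZ^{2N-3})=2N-2$, but this is cosmetic), and upper bound via \Cref{Cor_LuckWeiermann} together with \Cref{Coro comm centraliser} and \Cref{prop:WeylN}. The only deviation is that you assert the equality $\underline{\gd}(\IA_N(3))=2N-3$ invoking Eilenberg--Ganea, whereas the paper only needs and uses the inequality $\underline{\gd}(\IA_N(3))\le 2N-3$ from the dimension of Outer space.
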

\begin{proof}
    The statement is clear for $N=1$. For $N=2$ follows from the fact that $\IA_2(N)$ is nonabelian free. Let $N\geq 3$. By \cite{CV86} $\Out(F_N)$ admits a model for $\underline{E}\Out(F_N)$ of dimension $2N-3$, thus $\underline{\gd}(\IA_N(3))\leq \underline{\gd}(\Out(F_N))\leq 2N-3$. From the latter inequality, \Cref{Cor_LuckWeiermann} and \Cref{prop:WeylN} we conclude $\underline{\underline{\gd}}(\IA_N(3))\leq 2N-2$. On the other hand, by \cite{CV86} there is a copy of $\ZZ^{2N-3}$ inside $\IA_N(3)$. By \cite{CFH06} we have $\underline{\underline{\gd}}(\ZZ^{2N-3})=2N-2$. Therefore $2N-2\leq \underline{\underline{\gd}}(\IA_N(3))$. This concludes the proof.
\end{proof}

\begin{corx}\label{corx.gdOutFn}
    Let $N\geq 1$.  Then, $\underline{\underline{\gd}}(\Out(F_N))$ is finite.
\end{corx}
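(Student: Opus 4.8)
The plan is to deduce the statement formally from \Cref{thmx.gdIA3}, exploiting that $\IA_N(3)$ is a finite index subgroup of $\Out(F_N)$. The first step is the elementary observation that $[\Out(F_N):\IA_N(3)]<\infty$: by definition $\IA_N(3)=\ker\bigl(\Out(F_N)\to\Aut(H_1(F_N;\FF_3))\bigr)$ and the target group $\Aut(H_1(F_N;\FF_3))\cong\GL_N(\FF_3)$ is finite. By \Cref{thmx.gdIA3} we have $\underline{\underline{\gd}}(\IA_N(3))=2N-2<\infty$ for every $N\ge 1$ (for $N=1$ the group $\IA_1(3)$ is trivial and for $N=2$ it is free, so the theorem still applies). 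Hence it suffices to prove the general principle that finiteness of the virtually cyclic geometric dimension passes from a finite index subgroup to the ambient group.

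For this I would argue via coinduction. Let $H\le G$ be of finite index and let $X$ be a finite-dimensional model for $\underline{\underline{E}}H=E_{\mathcal{VC}}H$. Form the coinduced $G$-CW-complex $\coind_H^G X=\map_H(G,X)$; it has dimension at most $[G:H]\cdot\dim X$, since non-equivariantly it is a product of $[G:H]$ copies of $X$ and the coinduction of a $G$-CW-complex along a finite index inclusion is again a $G$-CW-complex (see~\cite{LuckBookProj}). A direct computation of fixed-point sets gives, for each $K\le G$, a homeomorphism $(\coind_H^G X)^K\cong\prod_{HgK\in H\backslash G/K}X^{\,{}^{g}K\cap H}$, which is contractible when ${}^{g}K\cap H$ is virtually cyclic for every $g\in G$ and empty otherwise. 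Because $[G:H]<\infty$, the subgroup ${}^{g}K\cap H$ has finite index in ${}^{g}K$, and the class of virtually cyclic groups is closed under passing to finite index subgroups and to finite index overgroups; thus ${}^{g}K\cap H$ is virtually cyclic if and only if $K$ is. Therefore $\coind_H^G X$ is a model for $E_{\mathcal{VC}}G=\underline{\underline{E}}G$, and $\underline{\underline{\gd}}(G)\le[G:H]\cdot\underline{\underline{\gd}}(H)$.

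Combining the two steps yields $\underline{\underline{\gd}}(\Out(F_N))\le[\Out(F_N):\IA_N(3)]\cdot(2N-2)<\infty$, which is the assertion.

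I do not anticipate a genuine obstacle: all of the substantive work is already packaged in \Cref{thmx.gdIA3}, and the passage to $\Out(F_N)$ is soft. The one point requiring mild care is the identification of the family produced by coinduction, namely the verification that coinduction along $\IA_N(3)\le\Out(F_N)$ carries $\mathcal{VC}(\IA_N(3))$ to exactly $\mathcal{VC}(\Out(F_N))$; this hinges on the stability of virtual cyclicity under commensuration. If one prefers, this general fact about finite index subgroups and $\underline{\underline{\gd}}$ can be quoted from the literature instead of argued via coinduction.
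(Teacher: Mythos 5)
Your argument is correct and matches the paper's: the paper simply invokes L\"uck's result \cite[Theorem~2.4]{Lu00}, whose proof is exactly the coinduction construction you spell out, to get $\underline{\underline{\gd}}(\Out(F_N))\leq |\Out(F_N):\IA_N(3)|\cdot\underline{\underline{\gd}}(\IA_N(3))$, combined with \Cref{thmx.gdIA3}. Your verification of the fixed-point formula and the stability of virtual cyclicity under finite-index sub- and overgroups is the content of that cited theorem, so the two proofs are essentially the same.
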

\begin{proof}
    This is trivial for $N=1$.  Suppose now $N\geq2$.  By \cite[Theorem~2.4]{Lu00} there exists a model for $\underline{\underline{E}}\Out(F_N)$ which has dimension at most $|\Out(F_n)\colon\IA_N(3)|\cdot\underline{\underline{\gd}}(\IA_N(3))$
    which is finite.
\end{proof}

Note that this gives the upper bound
\[\underline{\underline{\gd}}(\Out(F_N))\leq |\GL_N(3)|\cdot \underline{\underline{\gd}}(\IA_N(3))=3^{\frac{1}{2}N(N-1)}\cdot(2N-2)\cdot\prod_{i=1}^{N-1}(3^{N-i}-1).\]

\begin{question}
    Is $\underline{\underline{\gd}}(\Out(F_N))=2N-2$?
\end{question}

\bibliographystyle{alpha}
\bibliography{ref.bib}

\end{document}